\let\OLDitemize\itemize
\renewcommand\itemize{\OLDitemize\addtolength{\itemsep}{-8pt}}
\numberwithin{equation}{section}
\newtheorem{Proposition}{Proposition}[section]
\newtheorem{Lemma}{Lemma}[section]
\newtheorem{Theorem}{Theorem}[section]
\newtheorem{Corollary}{Corollary}[section]
\newtheorem{Definition}{Definition}[section]
\begin{document}

\begin{titlepage}

\title{Pre-Metric-Bourbaki Algebroids: Cartan Calculus for M-Theory}

\author{Aybike \c{C}atal-\"{O}zer$^1$, Tekin Dereli$^{2, 3}$, Keremcan Do\u{g}an$^{1 \dagger}$ \\ \small{$^1$Department of Mathematics, \.{I}stanbul Technical University, \.{I}stanbul, Turkey,} \\ \small{$^2$Department of Physics, Ko\c{c} University, \.{I}stanbul, Turkey} \\ \small{$^3$Faculty of Engineering and Natural Sciences, Maltepe University, \.{I}stanbul, Turkey}}

\date{$^{\dagger}$Corresponding author, \mbox{E-mail: dogankerem[at]itu.edu.tr}}

\maketitle

\begin{abstract}
\noindent String and M theories seem to require generalizations of usual notions of differential geometry on smooth manifolds. Such generalizations usually involve extending the tangent bundle to larger vector bundles equipped with various algebroid structures such as Courant algebroids, higher Courant algebroids, metric algebroids, or $G$-algebroids. The most general geometric scheme is not well understood yet, and a unifying framework for such algebroid structures is needed. Our aim in this paper is to propose such a general framework. Our strategy is to follow the hierarchy of defining axioms for a Courant algebroid: almost-Courant - metric - pre-Courant - Courant. In particular, we focus on the symmetric part of the bracket and the metric invariance property, and try to make sense of them in a manner as general as possible. These ideas lead us to define new algebroid structures which we dub Bourbaki and metric-Bourbaki algebroids, together with their almost- and pre-versions. For a special case of pre-metric-Bourbaki algebroids that we call exact, we construct a collection of maps which generalize the Cartan calculus of exterior derivative, Lie derivative and interior product. This is done by a kind of reverse-mathematical analysis of the \v{S}evera classification of exact Courant algebroids. By abstracting crucial properties of this collection of maps, we define the notion of Bourbaki pre-calculus. Conversely, given an arbitrary Bourbaki pre-calculus, we construct a pre-metric-Bourbaki algebroid by building up a standard bracket that is analogous to the Dorfman bracket. Moreover, we prove that any exact pre-metric-Bourbaki algebroid satisfying some further conditions has to have a bracket that is the twisted version of the standard bracket; a partly analogous result to \v{S}evera classification. We prove that many physically and mathematically motivated algebroids from the literature are examples of these new algebroids, and when possible we construct a Bourbaki pre-calculus on them. In particular, we show that the Cartan calculus can be seen as the Bourbaki pre-calculus corresponding to an exact higher pre-Courant algebroid. We also point out examples of Bourbaki pre-calculi including the generalization of the Cartan calculus on vector bundle valued forms. One straightforward generalization of our constructions might be done by replacing the tangent bundle with an arbitrary Lie algebroid $A$. This step allows us to define $A$-Bourbaki algebroids and Bourbaki $A$-pre-calculus, and extend our main results for them while proving many other algebroids from the literature fit into this framework.

\end{abstract}

\vskip 2cm

\textit{Keywords}: Bourbaki Algebroids, Cartan Calculus, \v{S}evera Classification, Metric Algebroids, Higher Courant Algebroids, Leibniz Algebroids


\thispagestyle{empty}

\end{titlepage}

\maketitle


\section{Introduction}
\label{s1}

\noindent Geometry and physics have always been in a profound relation. Historically, Newtonian classical mechanics depends on the Euclidean geometry of the absolute space. On the other hand, Einstein's relativity forces one to work in the broader framework of (semi-)Riemannian geometry. Mathematical structures of this geometry are constructed on the tangent bundle $T(M)$ of a smooth manifold $M$, and the Einstein-Hilbert action 
\begin{equation}
S_{EH}[g] = \int_M{R(^g \nabla, g) *_g 1}
\label{ea}
\end{equation}
leads to Einstein field equations, where $R(^g \nabla, g)$ is the Ricci scalar of a metric $g$ and its associated Levi-Civita connection $^g \nabla$. These equations explain gravity in terms of differential geometric structures. Yet, a huge part of fundamental physics, namely quantum field theory, is inconsistent with general relativity. String and M theories are arguably one of the most promising candidates in order to solve this inconsistency. In order to explain some of the features of string theories, it seems necessary to consider geometric structures that suppress the usual notions of differential geometry on smooth manifolds. For example, bosonic sector of string theories can be formulated in the framework of double field theory, which geometrizes Kalb-Ramond $B$-field and $T$-duality \cite{1}:
\begin{equation}
S[g, H, \phi] = \int_M {e^{-2 \phi} \left[ R(^g \nabla, g) *_g 1 - \frac{1}{2} H \wedge *_g H + 4 d \phi \wedge *_g d \phi \right]},
\label{ea2}
\end{equation}
where $H$ is the field strength of the Kalb-Ramond $B$-field, and $\phi$ is the dilaton. This theory is closely related to generalized geometry \cite{2} on exact Courant algebroids, which are of the form $T(M) \oplus T^*(M)$, where $T^*(M)$ is the cotangent bundle of $M$. Consequently, the action (\ref{ea2}) can be written in a geometric sense in terms of generalized or ``doubled'' geometric structures; specifically by using the generalized Ricci scalar. For other sectors and $U$-duality, one can use exceptional generalized geometry which is related to higher Courant algebroids \cite{3} and $G$-algebroids \cite{4}. There are many other both physically and mathematically motivated examples including $E$-Courant \cite{5}, $AV$-Courant \cite{6} and omni-Lie \cite{7} algebroids, all of which is a generalization of Courant algebroids in some certain sense. The tangent bundle $T(M)$ itself is a Lie algebroid with the Lie bracket of vector fields, and one can generalize the usual differential geometry on $T(M)$ to an arbitrary Lie algebroid \cite{8}. 

As these algebroid structures are widely used in the literature in order to find generalizations of a differential geometry suitable for string and M theories, establishing a more unifying framework of algebroids is crucial. The aim of this paper is to establish such a general framework, while our strategy is to focus on the hierarchy of axioms leading to a Courant algebroid, and to analyze the \v{S}evera classification of exact Courant algebroids in details. Namely, we consider the relations between almost-Courant, metric, pre-Courant and Courant algebroids, with a particular interest in the symmetric part of the bracket and the metric invariance property. We start with the observation that any local almost-Leibniz bracket's symmetric part has to satisfy a certain condition. By using this condition, we decompose the symmetric part into two components: a vector bundle valued metric and a first-order differential operator. This leads us to the definition of almost-Bourbaki algebroids\footnote{The name ``Bourbaki'' comes from an inside joke; when KD declared a wish about ``constructing a generalization of $E$-Courant algebroids which generalize Courant algebroids'', AÇÖ compared his mentality to Bourbaki tradition, and in the next meeting KD defined Bourbaki algebroids, satirically. We found no better alternative, so we stick with the name Bourbaki. Coincidentally, in one of our main references \cite{9} about first-order differential operators, Bourbaki's book on algebra \cite{10} is cited for a discussion of a general notion of derivations.}. Then, by examining the metric invariance property, we consider a derivation bundle valued map, and define almost-metric-Bourbaki algebroids which satisfy a metric invariance condition in some generalized sense. By considering the Leibniz-Jacobi identity and a certain morphism property of the anchor, we also define Bourbaki, metric-Bourbaki algebroids and their pre-versions. A special case of Bourbaki algebroids that we call exact shed some lights on the geometry on the direct sum $T(M) \oplus Z$ for some arbitrary vector bundle $Z$. By using exact pre-metric-Bourbaki algebroids, we construct a collection of maps which are analogous to exterior derivative, Lie derivative and interior product. By abstracting certain crucial properties of these maps, we define the notion of Bourbaki pre-calculus as a generalization of the Cartan calculus. We investigate the relation between Bourbaki algebroids and Bourbaki pre-calculus thoroughly. We show that given an arbitrary Bourbaki pre-calculus, one can construct a pre-metric-Bourbaki algebroid, and for certain exact pre-metric algebroids, the bracket has to be a twisted version of a Dorfman-like bracket; a result partly partly extending the \v{S}evera classification. As a natural extension, by replacing the tangent bundle with an arbitrary Lie algebroid $A$, we also define $A$-Bourbaki algebroids and Bourbaki $A$-pre-calculus completely analogously, and extend our main results for them. Furthermore, we give several examples of algebroids and calculi for both the usual and $A$-versions.

As we use many prefixes in our paper, a warning should be made: the definitions of algebroids, especially the ones with ``almost'' or ``pre'' prefixes, might differ from paper to paper\footnote{As the names of the structures in the algebroid/algebra literature have already gotten out of hand, we use this opportunity to give \textit{the best prefix award} to Weinstein for omni-Lie algebras \cite{11}.}. In the text, we use ``almost'' when there is only one (right-)Leibniz rule. We add the term ``local'', when the second (left-)Leibniz rule is also introduced. ``Pre'' automatically means that the anchor is a morphism of brackets. The term ``Lie'' appears when the bracket is anti-symmetric. If there is no prefix, the algebroid often satisfies the Leibniz-Jacobi identity. Moreover, if there is the adjective ``exact'', the underlying vector bundle fits into a certain short exact sequence of vector bundles. We also use the word ``metric'' as a prefix, and in this case a generalization of the metric invariance property is assumed.

In Section (\ref{s2}), we summarize the definitions and important properties of interior product, exterior derivative, and Lie derivative. In Section (\ref{s3}), we discuss anchored vector bundles and splittings of short exact sequences in the category of vector bundles. In Section (\ref{s4}), we outline the hierarchy of almost-Courant, metric, pre-Courant and Courant algebroids, and comment on the \v{S}evera classification of exact Courant algebroids. Sections (\ref{s2}, \ref{s3}, \ref{s4}) are introductory, and they are included for pedagogical purposes, so they might be skipped by more experienced readers. Yet, they also serve as a summary of the notation of the paper. In Section (\ref{s5}), we first discuss ``standard'' higher Courant algebroids as they appear in the existing literature. Then, we axiomatically define higher Courant algebroids, which will provide the main insight for the consecutive sections. Sections (\ref{s6} - \ref{s9}) constitute the main and original part of our work. In Section (\ref{s6}), we examine the hierarchy of Courant algebroids starting from the symmetric part of the bracket, and we define Bourbaki algebroids in terms of a vector bundle valued metric and a first-order differential operator. In Section (\ref{s7}), we focus on the metric invariance property, and define metric-Bourbaki algebroids. In Section (\ref{s8}), we define exact versions of these algebroids. In Section (\ref{s9}), we introduce Bourbaki pre-calculus, analyze the relation between this calculus and Bourbaki algebroids, and present our main theorems. In Section (\ref{s10}), we present many examples of Bourbaki and metric-Bourbaki algebroids together with Bourbaki pre-calculi. In Section (\ref{s11}), we generalize our constructions and results once more, and define $A$-Bourbaki algebroids and Bourbaki $A$-pre-calculi for an arbitrary Lie algebroid $A$, where in Section (\ref{s12}), we give several examples of these structures. Sections (\ref{s10}) and (\ref{s12}) contain a large number of algebroid examples from the literature, so they serve as a dictionary of algebroids with a unified notation. In Section (\ref{s13}), we make some concluding remarks, and point out certain possible future research routes.


\section{Cartan Calculus on Smooth Manifolds}
\label{s2}

\noindent In this section, we set our notation and summarize the mathematical structures on the tangent bundle that will be used throughout the paper. In particular, we focus on Cartan calculus, namely the exterior derivative, Lie derivative and interior product, and certain relations between them.

For a smooth manifold $M$, its tangent bundle is denoted by $T(M)$. Sections of the tangent bundle are the vector fields. The set of vector fields, $\mathfrak{X}(M)$, is isomorphic to the derivations on the ring of smooth functions $C^{\infty}(M, \mathbb{R})$, and the action of a vector field $V$ on a smooth function $f$ is denoted by $V(f)$. The set of vector fields becomes a Lie algebra with the Lie bracket, which is an anti-symmetric, $\mathbb{R}$-bilinear map $[\cdot,\cdot]: \mathfrak{X}(M) \times \mathfrak{X}(M) \to \mathfrak{X}(M)$ that satisfies the Jacobi identity. Moreover, as the Lie bracket also satisfies the Leibniz rule, the triplet $(T(M), id_{T(M)}, [\cdot,\cdot])$ becomes a Lie algebroid called the tangent Lie algebroid, where $id_X$ denotes the identity map for a set $X$. The dual of the tangent bundle is the cotangent bundle $T^*(M)$, and its sections are (exterior differential) $1$-forms whose set is denoted by $\Omega^1(M)$. By using the tensor products of the tangent bundle and the cotangent bundle, one can construct the set of tensors of $(q, r)$-type, which is denoted by $Tens^{(q, r)}(M)$. The Lie bracket of vector fields can be extended to the Lie derivative acting on the whole tensor algebra by the Leibniz rule, i.e. the Lie derivative with respect to a vector field $V$ is a map 
$\mathcal{L}_V: Tens^{(q, r)}(M) \to Tens^{(q, r)}(M)$ satisfying
\begin{align} 
(\mathcal{L}_V T)(V_1, \ldots, V_r, \omega_1, \ldots, \omega_q) &:= V(T(V_1, \ldots, V_r, \omega_1, \ldots, \omega_q)) \nonumber\\
& \quad \ - \sum_{i=1}^r T(V_1, \ldots, \mathcal{L}_V V_i, \ldots V_r, \omega_1, \ldots, \omega_q) \nonumber\\
& \quad \ - \sum_{i=1}^q T(V_1, \ldots, V_r, \omega_1, \ldots, \mathcal{L}_V \omega_i, \ldots, \omega_q),
\label{eb1}
\end{align}
for $T \in Tens^{(q, r)}(M), V, V_i \in \mathfrak{X}(M), \omega_i \in \Omega^1(M)$. In particular, one has
\begin{align} 
\mathcal{L}_V f &= V(f), \nonumber\\
\mathcal{L}_V U &= [V, U], \nonumber\\
(\mathcal{L}_V \omega)(U) &= V(\omega(U)) - \omega([V, U]),
\label{eb2}
\end{align}
for all $f \in C^{\infty}(M, \mathbb{R}), U, V \in \mathfrak{X}(M), \omega \in \Omega^1(M)$. Anti-symmetric $(0, p)$-type tensors are called $p$-forms, and the set of $p$-forms is denoted by $\Omega^p(M)$. The Lie derivative acting on $p$-forms can be seen as a map $\mathcal{L}_V: \Omega^p(M) \to \Omega^p(M)$, and it is a graded derivation of degree 0 on the set of all forms $\Omega(M) := \oplus_{i=1}^{dim(M)} \Omega^i(M)$. There is also a graded derivation of degree 1 on $\Omega(M)$ called the exterior derivative which is defined as a map $d: \Omega^p(M) \to \Omega^{p+1}(M)$ such that
\begin{align}
(d\omega)(V_1, \ldots, V_{p+1}) &:= \sum_{1 \leq i \leq p+1} (-1)^{i+1} V_i(\omega(V_1, \ldots, \check{V_i}, \ldots, V_{p+1})) \nonumber\\
& \quad \ + \sum_{1 \leq i < j \leq {p+1}} \omega([V_i, V_j], V_1, \ldots, \check{V_i}, \ldots \check{V_j}, \ldots, V_{p+1}),
\label{eb3}
\end{align}
for $V_i \in \mathfrak{X}(M)$, where $\check{V}$ indicates that $V$ is excluded. As the square of the exterior derivative is 0 by the Jacobi identity of the Lie bracket, $(\Omega(M), d)$ becomes a differential graded algebra. There is also a family of differential graded algebra structures induced by the interior product with respect to vector fields. The interior product with respect to a vector field $V$ is defined as a map $\iota_V: \Omega^{p}(M) \to \Omega^{p-1}(M)$ such that
\begin{equation} 
(\iota_V \omega)(V_1, \ldots, V_{p-1}) := \omega(V, V_1, \ldots, V_{p-1}),
\label{eb4}
\end{equation}
for $V_i \in \mathfrak{X}(M)$. The anti-symmetry of $p$-forms implies that $\iota_V^2 = 0$ for all $V \in \mathfrak{X}(M)$, and it is a graded derivation of degree $-1$. The exterior derivative, interior product and the Lie derivative satisfy
\begin{align}
d^2 &= \iota_V^2 = 0, 
\label{eb5} \\
\mathcal{L}_V &= d \iota_V + \iota_V d,
\label{eb6} \\
\mathcal{L}_U \iota_V &= \iota_{[U, V]} + \iota_V \mathcal{L}_U,
\label{eb7} \\
\mathcal{L}_{[U, V]} &= \mathcal{L}_U \mathcal{L}_V - \mathcal{L}_V \mathcal{L}_U, 
\label{eb8} \\
\mathcal{L}_V d &= d \mathcal{L}_V,
\label{eb9}
\end{align}
for all $U, V \in \mathfrak{X}(M)$. Equation (\ref{eb8}) follows from the Jacobi identity for the Lie bracket, and Equation (\ref{eb9}) follows from $d^2 = 0$ and Cartan magic formula (\ref{eb6}). One can also prove the following relations about the $C^{\infty}(M, \mathbb{R})$-module structure:
\begin{align}
\iota_{f V} \omega &= \iota_V (f \omega) = f \iota_V \omega, \nonumber\\
d(f \omega) &= f d \omega + d f \wedge \omega, \nonumber\\
\mathcal{L}_V (f \omega) &= f \mathcal{L}_V \omega + V(f) \omega, \nonumber\\
\mathcal{L}_{f V} \omega &= f \mathcal{L}_V \omega + d f \wedge \iota_V \omega, 
\label{eb10}
\end{align}
for all $f \in C^{\infty}(M, \mathbb{R}), V \in \mathfrak{X}(M), \omega \in \Omega^p(M)$.

By Cartan calculus on a smooth manifold, we mean the study of interior product, exterior derivative, Lie derivative, and the relations between them given in this section. More formally, Cartan calculus on $M$ is a triplet $(\iota, d, \mathcal{L})$ from $Der(\Omega(M)) := \oplus_k Der^k (\Omega(M))$, where $Der^k(\Omega(M))$ is the set of derivations on $\Omega(M)$ of degree $k$, such that Equations (\ref{eb5}) - (\ref{eb9}) hold.

We finish this section with some remarks about the notation of the paper. First of all, everything is assumed to be in the smooth category. We only consider real and finite-rank vector bundles over the \textit{same} base manifold $M$, which is Hausdorff and paracompact. Moreover, every map between vector bundles is assumed to be covering the identity $id_M$. For a vector bundle $E$, the set of its sections is denoted by $\Gamma(E)$. The composition of two maps $\xi_1, \xi_2$ with appropriate domains and ranges is denoted by $\xi_1 \xi_2$ without anything in between. We do not use anything for the scalar multiplication for a module structure, either. The only modules in this paper are modules over the ring of smooth functions, so this notation only appears with smooth functions. 



\section{Anchored Vector Bundles}
\label{s3}

\noindent In this section, we continue to set the notation of the paper while explaining anchored vector bundles. We try to generalize some of the constructions of the previous section, including tensors, vector fields, forms, metrics, connections and interior product, for an arbitrary vector bundle equipped with an anchor.

One can generalize the definition of tensors on a manifold to $E$-tensors on an arbitrary (real, finite-rank) vector bundle $E$, where a $(q, r)$-type $E$-tensor is defined as an element of
\begin{equation}
Tens^{(q, r)}(E) := \Gamma \left( \bigotimes_{i = 1}^q E \otimes \bigotimes_{j = 1}^r E^* \right).
\label{ec1}
\end{equation}
Similarly, one can define $E$-vector fields, $\mathfrak{X}(E) := \Gamma(E)$, $E$-1-forms $\Omega^1(E) := \Gamma(E^*)$, and $E$-$p$-forms, $\Omega^p(E)$, local $E$-frames, local $E$-coframes, completely analogously. On the set of all $E$-$p$-forms, one can define an $E$-interior product $\iota^E_v: \Omega^p(E) \to \Omega^{p-1}(E)$ analogously to Equation (\ref{eb4}). Consequently, it also squares to 0, i.e. $\left( \iota^E_v \right)^2 = 0$ for all $v \in \mathfrak{X}(E)$ due to anti-symmetry. 

A fiber-wise metric on $E$ can be considered as a non-degenerate, symmetric $(0, 2)$-type $E$-tensor, and any such map is called an $E$-metric. Any $E$-metric $g: \mathfrak{X}(E) \times \mathfrak{X}(E) \to C^{\infty}(M, \mathbb{R})$ induces a map, the \textit{flat} musical isomorphism, $g: \mathfrak{X}(E) \to \Omega^1(E)$, which is denoted by the same letter. As an $E$-metric is non-degenerate, one can define the inverse $E$-metric $g^{-1}: \Omega^1(E) \times \Omega^1(E) \to C^{\infty}(M, \mathbb{R})$. Similarly, the inverse $E$-metric also induces the \textit{sharp} musical isomorphism $g^{-1}: \Omega^1(E) \to \mathfrak{X}(E)$. One can also define vector bundle valued $E$-tensors. In particular, for a vector bundle $R$, an $R$-valued $E$-metric is defined as a symmetric, non-degenerate, $C^{\infty}(M, \mathbb{R})$-bilinear map $g: \mathfrak{X}(E) \times \mathfrak{X}(E) \to \mathfrak{X}(R)$. 

As the sections of a general vector bundle $E$ do not act as derivations on the smooth functions, one often needs to introduce an anchor, which is a vector bundle morphism from the vector bundle $E$ to the tangent bundle whose sections are derivations on smooth functions, for defining many geometrical objects including $E$-connections. 

\begin{Definition} A doublet $(E, \rho_E)$ is called an anchored vector bundle if $\rho_E: E \to T(M)$ is a vector bundle morphism.
\label{dc1}
\end{Definition}
As vector bundle morphisms are $C^{\infty}(M, \mathbb{R})$-linear, they induce a map on the sections. We use the same notation for these maps, so in particular we work with a map $\rho_E: = \mathfrak{X}(E) \to \mathfrak{X}(M)$. The anchor $\rho_E: E \to T(M)$ also induces another map $D_E := \rho_E^* d: C^{\infty}(M, \mathbb{R}) \to \Omega^1(E)$, where $d$ is the usual exterior derivative, and $^*$ denotes the transpose of a vector bundle morphism. In other words, $(D_E f)(u) = \rho_E(u)(f)$, for all $f \in C^{\infty}(M, \mathbb{R}), u \in \mathfrak{X}(E)$.

By using the anchor, one can define a generalization of vector bundle connections.
\begin{Definition} \cite{12} Let $(E, \rho_E)$ be an anchored vector bundle, and $R$ a vector bundle. An $E$-connection on $R$ is an $\mathbb{R}$-bilinear map $\nabla: \mathfrak{X}(E) \times \mathfrak{X}(R) \to \mathfrak{X}(R)$ such that
\begin{align}
    \nabla_v (f r) &= f \nabla_v r + \rho_E(v)(f) r, \nonumber\\
    \nabla_{f v} r &= f \nabla_v r,
\label{ec2}
\end{align}
for all $v \in \mathfrak{X}(E), f \in C^{\infty}(M, \mathbb{R}), r \in \mathfrak{X}(R)$. An $E$-connection on $E$ is called a linear $E$-connection. 
\label{dc2}
\end{Definition}
\noindent Note that any $E$-connection induces a map $\nabla: \mathfrak{X}(R) \to \mathfrak{X}(E^*) \times \mathfrak{X}(R)$.

If the anchor is surjective, then an anchored vector bundle is called \textit{transitive}. Moreover, if the anchor is of locally constant rank, then such a vector bundle is called \textit{regular}. For regular vector bundles, the kernel of the anchor is a subbundle, and every transitive vector bundle is regular. Therefore, for transitive vector bundles, one has the following short exact sequence in the category of vector bundles:

\begin{equation}
    0 \xrightarrow{\quad} ker(\rho_E) \xrightarrow{\quad inc \quad} E \xrightarrow{\ \quad \rho_E \ \quad} T(M) \xrightarrow{\quad} 0,
    \label{ec3}
\end{equation}
where $inc: ker(\rho_E) \to E$ is the canonical inclusion map from the kernel of the anchor to $E$. Many algebroid examples from the literature fits into a short exact sequence of vector bundles of the form

\begin{equation}
    0 \xrightarrow{\quad} Z \xrightarrow{\quad \ \chi \ \quad} E \xrightarrow{\ \quad \rho_E \ \quad} T(M) \xrightarrow{\quad} 0,
    \label{ec4}
\end{equation}
for some vector bundle $Z$ and vector bundle morphism $\chi: Z \to E$. For instance, for exact Courant algebroids $Z = T^*(M)$, and for exact higher Courant algebroids $Z = \Lambda^{p-1}(T^*(M))$. Motivated from these examples, for exact Bourbaki algebroids of Section (\ref{s8}), we will focus on the general properties of the vector bundle $Z$. 

As we consider real and finite-rank vector bundles over a paracompact manifold, any short exact sequence of vector bundles splits, i.e. there is a vector bundle morphism $\phi: T(M) \to E$ such that $\rho_E \phi = id_{T(M)}$. The (right-)splitting $\phi$ induces a vector bundle isomorphism $T(M) \oplus Z \to E$ given by $\phi \oplus \chi$, where $(\phi \oplus \chi)(U + z) := \phi(U) + \chi(z)$ for $U \in \mathfrak{X}(M), z \in \mathfrak{X}(Z)$. When $E$ is equipped with an $E$-metric $g$ (possibly vector bundle valued), a map whose range is in $E$, e.g., a splitting $\phi: T(M) \to E$, is called $g$-isotropic if its image is isotropic with respect to $g$, i.e.
\begin{equation}
    g(\phi(U), \phi(V)) = 0,
\label{ec5}
\end{equation}
for all $U, V \in \mathfrak{X}(M)$. 

As we did in Section (\ref{s2}), we finish this section with some notation related remarks. Although we always explicitly state in the text, uppercase Latin letters such as $U, V, W$ denote usual vector fields on a manifold, and lowercase Greek letters such as $\omega, \eta$ denote usual $p$-forms. On the other hand, lowercase Latin letters such as $u, v, w$ denote sections of a general vector bundle $E$ (i.e. $E$-vector fields), and uppercase Greek letters such as $\Omega$ denote $E$-$p$-forms. When we use two or more vector bundles together, we use the notation $u, v, w$ for the sections of $E$, and for the others we use the neighborhood of the lowercase version of the vector bundle; for example $r, q, p$ for the sections of a vector bundle $R$ or $z, y, x$ for $Z$.


\section{Metric and Courant Algebroids}
\label{s4}

\noindent In this section, we present the definitions of almost-Courant, metric, pre-Courant and Courant algebroids, all of which fit into a hierarchy of axioms. We start the constructions from the standard Courant algebroid equipped with the Dorfman bracket.

The \textit{standard Courant algebroid} is of the form $\mathbb{T}(M) := T(M) \oplus T^*(M)$ which is equipped with the \textit{Dorfman bracket}:
\begin{equation}
    [U + \omega, V + \eta]_D := [U, V] + \mathcal{L}_U \eta - \mathcal{L}_V \omega + d \iota_V \omega,
\label{ed1}
\end{equation}
where $U, V \in \mathfrak{X}(M), \omega, \eta \in \Omega^1(M)$, so that $U + \omega, V + \eta \in \mathfrak{X}(T(M) \oplus T^*(M)) = \mathfrak{X}(\mathbb{T}(M))$. The Dorfman bracket satisfies the following properties
\begin{align}
    [u, f v]_D &= f [u, v]_D + proj_1(u)(f) v, \nonumber\\
    [u, v]_D + [v, u]_D &= g_S^{-1} D_{\mathbb{T}(M)} g_S(u, v), \nonumber\\
    proj_1(u)(g_S(v, w)) &= g_S([u, v]_D, w) + g_S(v, [u,w]_D), \nonumber\\
    proj_1([u, v]_D) &= [proj_1(u), proj_1(v)], \nonumber\\
    [u, [v, w]_D]_D &= [[u, v]_D, w]_D + [v, [u, w]_D]_D,
\label{ed2}
\end{align}
for all $u, v, w \in \mathfrak{X}(\mathbb{T}(M)), f \in C^{\infty}(M, \mathbb{R})$, where $g_S: \mathfrak{X}(\mathbb{T}(M)) \times \mathfrak{X}(\mathbb{T}(M)) \to C^{\infty}(M, \mathbb{R})$ is defined by
\begin{equation}
    g_S(U + \omega, V + \eta) := \iota_U \eta + \iota_V \omega,
\label{ed3}
\end{equation} 
$proj_1: \mathbb{T}(M) = T(M) \oplus T^*(M) \to T(M)$ is the projection onto the first component, and $D_{\mathbb{T}(M)} := proj_1^* d$. Note that $(\mathbb{T}(M) = T(M) \oplus T^*(M), \rho_E = proj_1)$ is an anchored vector bundle, and $g_S$ is $C^{\infty}(M, \mathbb{R})$-bilinear, symmetric and non-degenerate, so it is a $\mathbb{T}(M)$-metric. By abstracting some or all of these properties, one can get the notion of many algebroids.
\begin{Definition} A triplet  $(E, \rho_E, [\cdot,\cdot]_E)$ is called an almost-Leibniz algebroid if
\begin{itemize}
    \item $(E, \rho_E)$ is an anchored vector bundle,
    \item $[\cdot,\cdot]_E: \mathfrak{X}(E) \times \mathfrak{X}(E) \to \mathfrak{X}(E)$ is an $\mathbb{R}$-bilinear map satisfying the right-Leibniz rule, i.e.
    \begin{equation}
        [u, f v]_E = f [u, v]_E + \rho_E(u)(f) v,
    \label{ed4}
    \end{equation}
    for all $u, v \in \mathfrak{X}(E), f \in C^{\infty}(M, \mathbb{R})$.
\end{itemize}
An almost-Leibniz algebroid is called a pre-Leibniz algebroid if the anchor is a morphism of brackets, i.e.
\begin{equation}
    \rho_E([u, v]_E) = [\rho_E(u), \rho_E(v)],
\label{ed5}
\end{equation}
for all $u, v \in \mathfrak{X}(E)$. Moreover, an almost-Leibniz algebroid is called a Leibniz algebroid if the Leibniz-Jacobi identity holds, i.e.
\begin{equation}
    [u, [v, w]_E]_E = [[u, v]_E, w]_E + [v, [u, w]_E]_E,
\label{ed6}
\end{equation}
for all $u, v, w \in \mathfrak{X}(E)$.
\label{dd1}
\end{Definition}
\noindent As the Leibniz-Jacobi identity (\ref{ed6}) and the right-Leibniz rule (\ref{ed4}) together imply that the anchor is a morphism of brackets, every Leibniz algebroid is a pre-Leibniz algebroid.

\begin{Definition} \cite{13} A quadruplet  $(E, \rho_E, [\cdot,\cdot]_E, g)$ is called an almost-Courant algebroid if
\begin{itemize}
    \item $(E, \rho_E, [\cdot,\cdot]_E)$ is an almost-Leibniz algebroid,
    \item $g$ is an $E$-metric,
    \item The symmetric part of the bracket\footnote{When we say ``the symmetric part of a bracket'', we will always ignore $1/2$ coefficient throughout the paper.} satisfies
    \begin{equation}
        [u, v]_E + [v, u]_E = g^{-1} D_E g(u, v),
    \label{ed7}
    \end{equation}
    for all $u, v \in \mathfrak{X}(E)$, where $D_E := \rho_E^* d$.
\end{itemize}
\label{dd2}
\end{Definition}
\noindent Note that for the right-hand side of Equation (\ref{ed7}), we have
\begin{align}
    g^{-1} D_E g(f u, v) &= g^{-1} D_E g(u, f v) = g^{-1} \rho_E^* d (f g(u, v)) \nonumber\\
    &= g^{-1} \rho_E^* \left( f d g(u, v) + d f \wedge g(u, v) \right) \nonumber\\
    &= f g^{-1} \rho_E^* d g(u, v) + g^{-1} \rho_E^*(d f \wedge g(u, v)) \nonumber\\
    &= f g^{-1} D_E g(u, v) + g^{-1} \rho_E^*(d f \wedge g(u, v)),
\label{ed8}
\end{align}
for all $u, v \in \mathfrak{X}(E), f \in C^{\infty}(M, \mathbb{R})$.
\begin{Definition} \cite{14} A quadruplet  $(E, \rho_E, [\cdot,\cdot]_E, g)$ is called a metric algebroid if
\begin{itemize}
    \item $(E, \rho_E)$ is an anchored vector bundle,
    \item $[\cdot,\cdot]_E: \mathfrak{X}(E) \times \mathfrak{X}(E) \to \mathfrak{X}(E)$ is an $\mathbb{R}$-bilinear map,
    \item $g$ is an $E$-metric,
    \item The symmetric part of the bracket satisfies
    \begin{equation}
        [u, v]_E + [v, u]_E = g^{-1} D_E g(u, v), 
    \label{ed9}
    \end{equation}
    for all $u, v \in \mathfrak{X}(E)$,
    \item The ``metric invariance property" holds:
    \begin{equation}
        \rho_E(u)(g(v, w)) = g([u, v]_E, w) + g(v, [u,w]_E),
    \label{ed10}
    \end{equation}
    for all $u, v, w \in \mathfrak{X}(E)$.
\end{itemize}
\label{dd3}
\end{Definition}
\noindent Every metric algebroid satisfies the following properties \cite{14}
\begin{align}
    [u, f v]_E &= f [u, v]_E + \rho_E(u)(f) v, \nonumber\\
    [f u, v]_E &= f[u, v]_E - \rho_E(v)(f) u + g(u, v) g^{-1} D_E f,
\label{ed11}
\end{align}
for all $u, v \in \mathfrak{X}(E), f \in C^{\infty}(M, \mathbb{R})$. First one, which follows from the metric invariance property and the non-degeneracy of the $E$-metric $g$, implies that every metric algebroid is an almost-Courant algebroid. 

\begin{Definition} A metric algebroid is called a pre-Courant (resp. Courant) algebroid if it is also a pre-Leibniz (resp. Leibniz) algebroid\footnote{Note that there is a pre-Courant algebroid notion in the literature \cite{14, 15}, which is slightly more restrictive (with an additional axiom about $ker(\rho_E)$ being a $g$-coisotropic distribution) than ours.}. 
\label{dd4}
\end{Definition}

One important class of almost-Courant algebroids is the exact ones that satisfy the following exact sequence:
\begin{equation}
    0 \xrightarrow{\quad} T^*(M) \xrightarrow{\quad {g^{-1} \rho_E^*} \quad} E \xrightarrow{\ \quad \rho_E \ \quad} T(M) \xrightarrow{\quad} 0.
    \label{ed12}
\end{equation}
\v{S}evera's classical result \cite{16} states that one can classify all exact Courant algebroids by the third de Rham cohomology of the base manifold up to Courant algebroid isomorphism. Consequently, for a $g$-isotropic splitting $\phi$, the $E$-metric of an exact Courant algebroid is given by
\begin{equation} g \left( (\phi \oplus g^{-1} \rho_E^*)(U + \omega), (\phi \oplus g^{-1} \rho_E^*)(V + \eta) \right) = g_S(U + \omega, V + \eta).
\label{ed13}
\end{equation}
Moreover, the bracket can be written in terms of the standard Dorfman bracket (\ref{ed1}) and a twist by a closed 3-form $H \in \Omega^3(M)$,
\begin{equation}
    [(\phi \oplus g^{-1} \rho_E^*)(U + \omega), (\phi \oplus g^{-1} \rho_E^*)(V + \eta)]_E = (\phi \oplus g^{-1} \rho_E^*)([U + \omega, V + \eta]_D) + g^{-1} \rho_E^* \iota_U \iota_V H,
\label{ed14}
\end{equation}
for all $U, V \in \mathfrak{X}(M), \omega, \eta \in \Omega^1(M)$, 
which is usually written simply without the maps as
\begin{equation}
    [U + \omega, V + \eta]_E = [U, V] + \mathcal{L}_U \eta - \mathcal{L}_V \omega + d \iota_V \omega + \iota_U \iota_V H.
\label{ed15}
\end{equation}


\section{Higher Metric and Higher Courant Algebroids}
\label{s5}

\noindent In this section, we present another hierarchy of axioms analogously to the previous section. We start with the standard higher Courant algebroid by replacing the 1-forms with higher degree forms, which is equipped with the higher Dorfman bracket. As a full axiomatization for the standard higher Courant algebroid is not done to our knowledge, we give the definitions of higher almost-Courant, higher metric, higher pre-Courant and higher Courant algebroids, which will be our main motivations for Bourbaki algebroids.

In the previous section, we saw that one has the Dorfman bracket on the standard Courant algebroid $\mathbb{T}(M) = T(M) \oplus T^*(M)$. By replacing $T^*(M)$ with the $k$th exterior power of the cotangent bundle, $\Lambda^k(T^*(M))$, for a manifold of dimension greater than $k$, one can get the ``standard'' higher Courant algebroid $\mathbb{T}^k(M) := T(M) \oplus \Lambda^k(T^*(M))$ with the \textit{higher Dorfman bracket} $[\cdot,\cdot]_{hD}$,
\begin{equation}
    [U + \omega, V + \eta]_{hD} := [U, V] + \mathcal{L}_U \eta - \mathcal{L}_V \omega + d \iota_V \omega,
\label{ee1}
\end{equation}
for $U, V \in \mathfrak{X}(M), \omega, \eta \in \Omega^k(M)$. which is exactly of the same form as the Dorfman bracket (\ref{ed1}) with $\omega, \eta$ being a $k$-form \cite{3}. Similarly, one can introduce a $\Lambda^{k-1}(T^*(M))$-valued $\mathbb{T}^k(M)$-metric by
\begin{equation}
    g_S(U + \omega, V + \eta) := \iota_U \eta + \iota_V \omega.
\label{ee2}
\end{equation}
Consequently, one can prove
\begin{align}
    [u, f v]_{hD} &= f [u, v]_{hD} + proj_1(u)(f) v, \nonumber\\
    [u, v]_{hD} + [v, u]_{hD} &= inc_2 \ d g_S(u, v), \nonumber\\
    \mathcal{L}_{proj_1(u)}(g_S(v, w)) &= g_S([u, v]_{hD}, w) + g_S(v, [u,w]_{hD}), \nonumber\\
    proj_1([u, v]_{hD}) &= [proj_1(u), proj_1(v)], \nonumber\\
    [u, [v, w]_{hD}]_{hD} &= [[u, v]_{hD}, w]_{hD} + [v, [u, w]_{hD}]_{hD},
\label{ee3}
\end{align}
for all $u, v, w \in \mathbb{T}^k(M), f \in C^{\infty}(M, \mathbb{R})$, where $inc_2: \Lambda^k(T^*(M)) \to \mathbb{T}^k(M) = T(M) \oplus \Lambda^k(T^*(M))$ is the canonical inclusion of the second component, and $proj_1: \mathbb{T}^k(M) = T(M) \oplus \Lambda^k(T^*(M)) \to T(M)$ is the projection onto the first component. 

An axiomatization of these properties analogous to Courant algebroids has not been done to our knowledge. We believe that this is partially because of a need for an additional ingredient for higher Courant algebroids with $k > 1$, which is not transparent when one does not include the inclusion map $inc_2$ in the second property in Equation (\ref{ee3}). For the usual Courant algebroids, $inc_2$ can be identified with $g_S^{-1} \rho_E^*$. This is not the case for higher Courant algebroids with $k > 1$ because $g_S^{-1}$ would not have the appropriate domain for an $\Lambda^{k-1}(T^*(M))$-valued $\mathbb{T}^k(M)$-metric, so a map $\chi: \Lambda^k(T^*(M)) \to E$ is the additional ingredient needed.

Note that similarly each of these properties (\ref{ee3}) reduces to the ones in Equation (\ref{ed2}) for a Courant algebroid for $k = 1$. The first, fourth and fifth ones are completely identical to those of Courant algebroids, implying that the standard higher Courant algebroid is an almost-Leibniz, pre-Leibniz and Leibniz algebroid, respectively. On the other hand, the third one follows from the simple fact that $\mathcal{L}_U f = U(f)$ for a smooth function $f$, so that the left-hand side of it becomes
\begin{equation}
    \mathcal{L}_{proj_1(u)}(g_S(v, w)) = proj_1(u)(g_S(v, w)),
\label{ee4}
\end{equation}
because $g_S$ takes values in smooth functions. 

In light of these observations, we introduce the axiomatic versions of higher almost-Courant, higher metric, higher pre-Courant and higher Courant algebroids.

\begin{Definition} A quintet $(E, \rho_E, [\cdot,\cdot]_E, g, \chi)$ is called a higher almost-Courant algebroid if
\begin{itemize}
    \item $(E, \rho_E, [\cdot,\cdot]_E)$ is an almost-Leibniz algebroid,
    \item $g$ is a $\Lambda^{k-1}(T^*(M))$-valued $E$-metric,
    \item $\chi: \Lambda^k(T^*(M)) \to E$ is a vector bundle morphism,
    \item The symmetric part of the bracket satisfies
    \begin{equation}
        [u, v]_E + [v, u]_E = \chi d g(u, v), 
    \label{ee5}
    \end{equation}
    for all $u, v \in \mathfrak{X}(E)$.
\end{itemize}
\label{de0}
\end{Definition}

\begin{Definition} A quintet $(E, \rho_E, [\cdot,\cdot]_E, g, \chi)$ is called a higher metric algebroid if
\begin{itemize}
    \item $(E, \rho_E)$ is an anchored vector bundle,
    \item $[\cdot,\cdot]_E: \mathfrak{X}(E) \times \mathfrak{X}(E) \to \mathfrak{X}(E)$ is an $\mathbb{R}$-bilinear map,
    \item $g$ is a $\Lambda^{k-1}(T^*(M))$-valued $E$-metric,
    \item $\chi: \Lambda^k(T^*(M)) \to E$ is a vector bundle morphism,
    \item The symmetric part of the bracket satisfies
    \begin{equation}
        [u, v]_E + [v, u]_E = \chi d g(u, v), 
    \label{ee6}
    \end{equation}
    for all $u, v \in \mathfrak{X}(E)$,
    \item The ``higher metric invariance property" holds:
    \begin{equation}
        \mathcal{L}_{\rho_E(u)}(g(v, w)) = g([u, v]_E, w) + g(v, [u,w]_E),
    \label{ee7}
    \end{equation}
    for all $u, v, w \in \mathfrak{X}(E)$.
\end{itemize}
\label{de1}
\end{Definition}
\noindent Note that the $E$-metric $g$ is $\Lambda^{k-1}(T^*(M))$-valued whereas the domain of the vector bundle morphism $\chi$ is $\Lambda^k(T^*(M))$, and the exterior derivative is used as a map $d: \Omega^{k-1}(M) \to \Omega^k(M)$. Moreover, we have
\begin{align}
    \chi d g(f u, v) &= \chi d g (u, f v) = \chi d (f g(u, v)) ) = \chi \left( f d g(u, v) + d f \wedge g(u, v) \right) \nonumber\\
    &= f \chi d g(u, v) + \chi(d f \wedge g(u, v)),
\label{ee8}
\end{align}
for all $u, v \in \mathfrak{X}(E), f \in C^{\infty}(M, \mathbb{R})$.

\begin{Proposition} Every higher metric algebroid is a higher almost-Courant algebroid satisfying
\begin{align}
    [u, f v]_E &= f [u, v]_E + \rho_E(u)(f) v, 
\label{ee9} \\
    [f u, v]_E &= f[u, v]_E - \rho_E(v)(f) u + \chi(d f \wedge g(u, v)),
\label{ee10}
\end{align}
for all $u, v \in \mathfrak{X}(E), f \in C^{\infty}(M, \mathbb{R})$.
\label{pe1}
\end{Proposition}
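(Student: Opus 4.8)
The plan is to observe that a higher metric algebroid already carries all the data of a higher almost-Courant algebroid---the anchored vector bundle, the $\mathbb{R}$-bilinear bracket, the $\Lambda^{k-1}(T^*(M))$-valued $E$-metric $g$, the morphism $\chi$, and the identical symmetric-part condition (\ref{ee6})${}={}$(\ref{ee5})---so the \emph{only} thing missing for Definition \ref{de0} is the right-Leibniz rule (\ref{ee9}). Thus I would first derive (\ref{ee9}) from the higher metric invariance property (\ref{ee7}) together with non-degeneracy of $g$, which simultaneously upgrades the structure to an almost-Leibniz algebroid and hence to a higher almost-Courant algebroid; and then I would derive the left-rule (\ref{ee10}) from (\ref{ee9}) and the symmetric-part condition (\ref{ee6}). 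This mirrors the Courant case of (\ref{ed11}), where the right-Leibniz rule likewise follows from metric invariance.

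For (\ref{ee9}), I would insert $fv$ into the second slot of the invariance property (\ref{ee7}), giving $\mathcal{L}_{\rho_E(u)}(g(fv,w)) = g([u,fv]_E,w) + g(fv,[u,w]_E)$. Using $C^{\infty}(M,\mathbb{R})$-bilinearity of $g$ the right-hand side rewrites with $f$ pulled out of the second term, while on the left one expands $\mathcal{L}_{\rho_E(u)}(f\,g(v,w))$ via the form version of the Lie-derivative Leibniz rule from (\ref{eb10}), namely $\mathcal{L}_V(f\omega)=f\mathcal{L}_V\omega+V(f)\omega$ applied to the $(k-1)$-form $g(v,w)$. Subtracting $f$ times the unmodified identity (\ref{ee7}) cancels the $[u,w]_E$ terms and leaves $g([u,fv]_E,w)=f\,g([u,v]_E,w)+\rho_E(u)(f)\,g(v,w)$. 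Re-absorbing the scalars into the first argument of $g$ yields $g([u,fv]_E,w)=g\big(f[u,v]_E+\rho_E(u)(f)v,\,w\big)$ for all $w$, and non-degeneracy of the (vector-bundle-valued) metric then strips off $w$ to give (\ref{ee9}).

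For (\ref{ee10}), I would start from the symmetric-part condition (\ref{ee6}) in the form $[fu,v]_E=\chi\,d\,g(fu,v)-[v,fu]_E$. The bracket $[v,fu]_E$ is expanded by the right-rule (\ref{ee9}) just obtained, producing $f[v,u]_E+\rho_E(v)(f)u$, while $\chi\,d\,g(fu,v)$ is expanded through (\ref{ee8}) as $f\,\chi\,d\,g(u,v)+\chi(df\wedge g(u,v))$. Finally, substituting $\chi\,d\,g(u,v)=[u,v]_E+[v,u]_E$ from (\ref{ee6}) once more makes the $f[v,u]_E$ contributions cancel, leaving exactly $[fu,v]_E=f[u,v]_E-\rho_E(v)(f)u+\chi(df\wedge g(u,v))$.

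I expect the substantive calculation to be entirely routine; the only point requiring care---and hence the main ``obstacle''---is the bookkeeping forced by $g$ being $\Lambda^{k-1}(T^*(M))$-valued rather than function-valued. One must consistently treat $g(v,w)$ as a $(k-1)$-form, use the \emph{form-valued} Leibniz rule for $\mathcal{L}_{\rho_E(u)}$ (not the scalar rule $\mathcal{L}_V f=V(f)$ that trivialized the analogous Courant-algebroid step), and invoke non-degeneracy of $g$ in the module sense, i.e.\ that $g(a,w)=g(b,w)$ for all $w$ implies $a=b$. Provided these type distinctions are respected, all cancellations go through exactly as in the $k=1$ reduction.
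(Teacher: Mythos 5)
Your proposal is correct and follows essentially the same route as the paper's own proof: the right-Leibniz rule is extracted from the higher metric invariance property via the form-valued Leibniz rule for $\mathcal{L}_{\rho_E(u)}$ and non-degeneracy of $g$, and the left-Leibniz rule then follows from the symmetric-part condition together with Equation (\ref{ee8}) and a second use of (\ref{ee6}). The only cosmetic difference is that the paper computes $[f v, u]_E$ while you compute $[f u, v]_E$; the cancellation scheme is identical.
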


\begin{proof} The proof is almost identical to the proof of Proposition 2.1 of \cite{14}. In order to prove the right-Leibniz rule (\ref{ee9}), we use the higher metric invariance property (\ref{ee7}):
\begin{equation}
    \mathcal{L}_{\rho_E(u)}(g(f v, w)) = g([u, f v]_E, w) + g(f v, [u, w]_E),
\nonumber
\end{equation}
which implies
\begin{equation}
    f \mathcal{L}_{\rho_E(u)}(g(v, w)) + \rho_E(u)(f) g(v, w) = g([u, f v]_E, w) + f g(v, [u, w]_E),
\nonumber
\end{equation}
by the $C^{\infty}(M, \mathbb{R})$-non-linearity property (\ref{eb10}) of the Lie derivative. Using the higher metric invariance property once again together with the $C^{\infty}(M, \mathbb{R})$-bilinearity of the $E$-metric $g$, we get
\begin{equation}
    g(f [u, v]_E, w) = g([u, f v]_E, w) - g(\rho_E(u)(f) v, w).
\nonumber
\end{equation}
The non-degeneracy of $g$ gives the desired result (\ref{ee8}). For the left-Leibniz rule $(\ref{ee10})$, we start with the symmetric part (\ref{ee6}) of the bracket
\begin{equation}
    [u, f v]_E + [f v, u]_E = \chi d g(u, fv),
\nonumber 
\end{equation}
which implies 
\begin{equation}
    [f v, u]_E = \chi d(f g(u, v)) - \left\{ f[u, v]_E + \rho_E(u)(f) v \right\},
\nonumber
\end{equation}
by the right-Leibniz rule (\ref{ee9}) which we proved. By using the $C^{\infty}(M, \mathbb{R})$-non-linearity property (\ref{eb10}) of the exterior derivative, we get
\begin{equation}
    [f v, u]_E = f \left\{ \chi d g(u, v) - [u, v]_E \right\} - \rho_E(u)(f) v + \chi(d f \wedge g(u, v)),
\nonumber 
\end{equation}
where we also use the $C^{\infty}(M, \mathbb{R})$-linearity of $g$ and $\chi$. By using the symmetric part (\ref{ee6}) one more time, we get the desired result due to the symmetry of the $E$-metric $g$.
\end{proof}

\begin{Definition} A higher metric algebroid is called a higher pre-Courant (resp. higher Courant) algebroid if it is also a pre-Leibniz (resp. Leibniz) algebroid.
\label{de2}
\end{Definition}

Note that, for $k = 1$, the identification $\chi = g^{-1} \rho_E^*$ and the fact that $U(f) = \mathcal{L}_U f$ indicate that every almost-Courant, metric, pre-Courant and Courant algebroid is a higher almost-Courant, higher metric, higher pre-Courant and higher Courant algebroid, respectively. Moreover, the standard higher Courant algebroid $\mathbb{T}^k(M) = T(M) \oplus \Lambda^k(T^*(M))$ equipped with the higher Dorfman bracket (\ref{ee1}) is a higher Courant algebroid due to Equation (\ref{ee3}), which is not surprising since our aim was to axiomatize these properties.

\section{Symmetric Part of the Bracket and Bourbaki Algebroids}
\label{s6}

\noindent In Sections (\ref{s4}) and (\ref{s5}), we considered the hierarchy of various algebroid structures:
\begin{center}
\begin{tikzcd}
& \mbox{Almost-Leibniz} \arrow[d, "\mbox{\footnotesize{symmetric part}}"] \\
& \mbox{(Higher) Almost-Courant} \arrow[d, "\mbox{\footnotesize{metric invariance}}"] \\
& \mbox{(Higher) Metric} \arrow[d, "\mbox{\footnotesize{morphism anchor}}"] \\
& \mbox{(Higher) Pre-Courant} \arrow[d, "\mbox{\footnotesize{Leibniz-Jacobi}}"] \\
& \mbox{(Higher) Courant}.
\end{tikzcd}
\end{center}
\noindent These algebroids are defined with a $\Lambda^k(T^*(M))$-valued $E$-metric, which is used for two purposes. First one is that the symmetric part of the bracket is given in terms of a map that \textit{behaves} like a derivation acting on the $E$-metric as given in Equation (\ref{ed8}) and (\ref{ee8}), whereas the second one is the metric invariance property (\ref{ed10}) and (\ref{ee7}). We saw that these are enough to prove the left- and right-Leibniz rules, which are necessary to show tensoriality of objects whose constructions involve the bracket. Lastly, we considered the cases when the anchor is a morphism of brackets and the Leibniz-Jacobi identity is satisfied. In this and consecutive sections, our aim is to build up a new construction parallel to this hierarchy in a rather natural manner. Hence, we begin by considering almost-Leibniz algebroids, which satisfy the right-Leibniz rule
\begin{equation}
[u, f v]_E = f [u, v]_E + \rho_E(u)(f) v,
\label{ef1}
\end{equation}
for all $u, v \in \mathfrak{X}(E), f \in C^{\infty}(M, \mathbb{R})$.

For an almost-Lie algebroid, i.e. an almost-Leibniz algebroid whose bracket is anti-symmetric, the above right-Leibniz rule automatically fixes the left-Leibniz rule as
\begin{equation} 
[f u, v]_E = -[v, f u]_E = - \left\{ f [v, u]_E + \rho_E(v)(f) u \right\} = f [u, v]_E - \rho_E(v)(f) u .
\label{ef2}
\end{equation}
On the other hand, for the most general case when there is no anti-symmetry, the right- and left-Leibniz rules are independent from each other. Therefore, the left-Leibniz rule should be introduced separately with an additional term:
\begin{equation} 
[f u, v]_E = f [u, v]_E - \rho_E(v)(f) u + L(D f, u, v),
\label{ef3}
\end{equation}
for some maps $L$ and $D$ with appropriate domains and ranges\footnote{We will have a similar and more detailed discussion in Section (\ref{s11}) about ``$A$-symbol maps''.}. In order to have a consistent rule for the product of two smooth functions, these maps should satisfy the following conditions
\begin{itemize}
    \item $L$ should be a $C^{\infty}(M, \mathbb{R})$-multilinear map,
    \item $D$ should be a derivation on the ring of smooth functions $C^{\infty}(M, \mathbb{R})$, i.e. 
    \begin{equation} D(f_1 f_2) = f_1 D f_2 + f_2 D f_1,
    \label{ef4}
    \end{equation}
    for all $f_1, f_2 \in C^{\infty}(M, \mathbb{R})$.
\end{itemize}
The usual exterior derivative\footnote{From now on, we use the underlined versions $\underline{d}, \underline{\mathcal{L}}$ and $\underline{\iota}$ for the usual differential geometric objects; exterior derivative, Lie derivative and interior product. In consecutive sections, we will reserve the letters, $d, \mathcal{L}, \iota$, for their generalized \textit{Bourbaki} versions.} $\underline{d}$ clearly satisfy the above property, but as we are working on an anchored vector bundle there is another natural candidate given by $D_E = \rho_E^* \underline{d}$. In the literature \cite{17}, including previous papers \cite{13, 18, 19} of TD and KD, local almost-Leibniz algebroids sometimes are defined by using $D_E$, and this leads one to a $C^{\infty}(M, \mathbb{R})$-multilinear map $L: \Omega^1(E) \times \mathfrak{X}(E) \times \mathfrak{X}(E) \to \mathfrak{X}(E)$. For the purposes of the paper, the first option $\underline{d}$ is more natural, so we define local almost-Leibniz algebroids as follows:
\begin{Definition} \cite{20} A quadruplet $(E, \rho_E, [\cdot,\cdot]_E, L)$ is called a local almost-Leibniz algebroid if
\begin{itemize}
    \item $(E, \rho_E, [\cdot,\cdot]_E)$ is an almost-Leibniz algebroid,
    \item $L: \Omega^1(M) \times \mathfrak{X}(E) \times \mathfrak{X}(E) \to \mathfrak{X}(E)$ is a $C^{\infty}(M, \mathbb{R})$-multilinear map, called the locality operator, 
    \item The bracket satisfies the left-Leibniz rule
        \begin{equation} 
        [f u, v]_E = f [u, v]_E - \rho_E(v)(f) u + L(\underline{d} f, u, v),
        \label{ef5}
        \end{equation}
        for all $u, v \in \mathfrak{X}(E), f \in C^{\infty}(M, \mathbb{R})$.
\end{itemize}
\label{df1}
\end{Definition}
Note that any almost-Courant algebroid is a local almost-Leibniz algebroid with the locality operator $L(\omega, u, v) = g^{-1} \rho_E^*(\omega g(u, v))$. Moreover any higher almost-Courant algebroid is a local almost-Leibniz algebroid with the locality operator $L(\omega, u, v) = \chi(\omega \wedge g(u, v))$.

As we mentioned in the discussion around Equation (\ref{ef2}), if the symmetric part of the bracket is 0, then the left-Leibniz rule fixes the right-Leibniz rule. On the other hand, provided that the symmetric part of the bracket is given in a specific form, we can still evaluate the locality operator, as in the case of almost-Courant and higher almost-Courant algebroids\footnote{See the proof of Proposition (\ref{pe1}).}. In the next proposition, we make a small observation; this specific form is not only sufficient, but also necessary for an almost-Leibniz algebroid to be local. Hence, we get an equivalent condition for an almost-Leibniz algebroid to be local.
\begin{Proposition} Let $(E, \rho_E, [\cdot,\cdot]_E)$ be an almost-Leibniz algebroid, $L: \Omega^1(M) \times \mathfrak{X}(E) \times \mathfrak{X}(E) \to \mathfrak{X}(E)$ a $C^{\infty}(M, \mathbb{R})$-multilinear map. Then, the following are equivalent:
\begin{enumerate}
    \item $(E, \rho_E, [\cdot,\cdot]_E, L)$ is a local almost-Leibniz algebroid.
    \item The symmetric part  $S(u, v) := [u, v]_E + [v, u]_E$ of the bracket satisfies
\begin{equation}
   S(f u, v) = S(v, f u) = f S(u, v) + L(\underline{d} f, u, v),
\label{ef6}
\end{equation}
for all $u, v \in \mathfrak{X}(E), f \in C^{\infty}(M, \mathbb{R})$. 
\end{enumerate}
\label{pf1}
\end{Proposition}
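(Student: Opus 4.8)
The plan is to prove the two implications by direct computation, using that the right-Leibniz rule (\ref{ef1}) is available throughout: it is part of the almost-Leibniz structure and so holds under both statements. The only genuine mechanism at work is that the right-Leibniz rule lets one trade a factor-scaled bracket $[v, fu]_E$ for $f[v,u]_E$ plus an anchor term $\rho_E(v)(f)u$, and this anchor term is exactly what cancels against the corresponding term in the left-Leibniz rule. First I would observe that the equality $S(fu, v) = S(v, fu)$ appearing in (\ref{ef6}) is automatic, since $S$ is symmetric by its very definition $S(u,v) = [u,v]_E + [v,u]_E$; hence in both directions the content reduces to the single relation $S(fu, v) = f S(u, v) + L(\underline{d}f, u, v)$.

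For the implication $(1) \Rightarrow (2)$, I would expand $S(fu, v) = [fu, v]_E + [v, fu]_E$, applying the left-Leibniz rule (\ref{ef5}) to the first term and the right-Leibniz rule (\ref{ef1}) to the second. This produces the two opposite anchor contributions $-\rho_E(v)(f)u$ and $+\rho_E(v)(f)u$, which cancel; collecting what remains gives $f\big([u,v]_E + [v,u]_E\big) + L(\underline{d}f, u, v) = f S(u,v) + L(\underline{d}f, u, v)$, which is precisely the required condition.

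For the converse $(2) \Rightarrow (1)$, I would isolate the left-hand bracket by writing $[fu, v]_E = S(fu, v) - [v, fu]_E$. Substituting the hypothesis (\ref{ef6}) for $S(fu, v)$ and the right-Leibniz rule (\ref{ef1}) for $[v, fu]_E$ yields $[fu, v]_E = f S(u,v) + L(\underline{d}f, u, v) - f[v,u]_E - \rho_E(v)(f) u$; since $f S(u,v) - f[v,u]_E = f[u,v]_E$ by the definition of $S$, this collapses to exactly the left-Leibniz rule (\ref{ef5}), establishing locality.

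Since $L$ is $C^{\infty}(M,\mathbb{R})$-multilinear and $S$ is $\mathbb{R}$-bilinear, the verification is manifestly consistent under products of functions, and there is no real obstacle here: each direction is a two-line manipulation. The one point worth flagging is the role of the standing right-Leibniz rule, since without it the symmetric-part condition and the left-Leibniz rule would not be interchangeable; the cancellation of the anchor terms $\rho_E(v)(f)u$ is precisely the bookkeeping that makes the equivalence work, which is why I would \emph{emphasize} this cancellation rather than any subtler estimate.
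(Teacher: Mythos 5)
Your proposal is correct and follows essentially the same argument as the paper: both directions are obtained by expanding $S(fu,v)$, invoking the right-Leibniz rule for the term $[v,fu]_E$, and observing that the anchor terms $\pm\rho_E(v)(f)u$ cancel. Your preliminary remark that $S(fu,v)=S(v,fu)$ is automatic by symmetry of $S$ is a harmless clarification the paper leaves implicit.
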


\begin{proof} It can be proven by simple calculations. First, let us prove $(1 \implies 2)$:
\begin{align*}
    S(f u, v) &= [f u, v]_E + [v, f u]_E \\
    &= \left\{ f [u, v]_E - \rho_E(v)(f) u + L(\underline{d} f, u, v) \right\} + \left\{ f [v, u]_E + \rho_E(v)(f) u \right\} \\
    &= f S(u, v) + L(\underline{d} f, u, v),
\nonumber 
\end{align*}
which is the desired result. For the other direction $(2 \implies 1)$, we start with
\begin{equation*}
    S(f u, v) = f S(u, v) + L(\underline{d} f, u, v),
\end{equation*}
which yields
\begin{equation*}
    [f u, v]_E + [v, f u]_E = f \left\{ [u, v]_E + [v, u]_E \right\} + L(\underline{d} f, u, v).
\end{equation*}
By using the right-Leibniz rule, this becomes
\begin{equation*}
    [f u, v]_E = f [u, v]_E - \rho_E(v)(f) u + L(\underline{d} f, u, v),
\end{equation*}
which is the desired left-Leibniz rule.
\end{proof}

One can observe that there are two properties of the symmetric part $S$ in the above proposition: the obvious one is symmetry, and the other is having a $C^{\infty}(M, \mathbb{R})$-non-linear term in Equation (\ref{ef6}). If one tries to separate these two properties, one can suggest to write down $S$ as a composition of two maps 
\begin{equation}
S = \mathbb{D} g,
\label{ef7}
\end{equation}
where $g$ is an $R$-valued $E$-metric for some arbitrary vector bundle $R$, and $\mathbb{D}: \mathfrak{X}(R) \to \mathfrak{X}(E)$ is an $\mathbb{R}$-linear (but $C^{\infty}(M, \mathbb{R})$-non-linear) map satisfying 
\begin{equation}
\mathbb{D}(f r) = f \mathbb{D} r + \mathbb{L}_{d f} r,
\label{ef8}
\end{equation}
for all $f \in C^{\infty}(M, \mathbb{R}), r \in \mathfrak{X}(R)$, where $\mathbb{L}: \Omega^1(M) \times \mathfrak{X}(R) \to \mathfrak{X}(E)$ is a $C^{\infty}(M, \mathbb{R})$-bilinear map. One can directly expect that $g$ is a symmetric $(0, 2)$-type $E$-tensor, yet the non-degeneracy of $g$ will be crucial for many results as we will see.

In order have a better understanding of the map $\mathbb{D}$, we use\footnote{Rather, we use a slightly modified version which allows us to consider maps with ranges and domains that can be different.} differential operator bundles \cite{9}, \cite{21}. Let us consider two vector bundles $R$ and $Z$. A zeroth-order differential operator from $R$ to $Z$ is a $C^{\infty}(M, \mathbb{R})$-linear map $\mathfrak{X}(R) \to \mathfrak{X}(Z)$, so it is just a vector bundle morphism, whose set will we denoted by $Hom(R, Z)$. On the other hand, a first-order differential operator is an $\mathbb{R}$-linear map $D: \mathfrak{X}(R) \to \mathfrak{X}(Z)$ such that for all $f \in C^{\infty}(M, \mathbb{R})$, the map
\begin{equation} r \mapsto D(f r) - f D(r)
\label{ef9}
\end{equation}
is a zeroth-order differential operator\footnote{Local almost-Leibniz algebroids can also be defined as algebroids whose brackets are first-order differential operators, see for example \cite{22}.}. Every zeroth-order differential operator is also first-order, so we have a natural inclusion $Hom(R, Z) \hookrightarrow Dif^1(R, Z)$, where $Dif^1(R, Z)$ denotes the set of first-order differential operators from $R$ to $Z$. Moreover, we can define a vector bundle morphism 
\begin{align} 
\sigma: Dif^1(R, Z) &\to Hom(T^*(M), Hom(R, Z)), \nonumber\\
D &\mapsto \left( \underline{d} f \mapsto \left( r \mapsto D(f r) - f D(r) \right) \right),
\label{ef10}
\end{align}
which is called the \textit{symbol map} of $D$. In this context, the above map $\mathbb{D}$ should be a first-order differential operator from $R$ to $E$, whose symbol is given by $\mathbb{L}$, where this identification follows from tensor-hom adjunction. This leads us to the following definition of almost-Bourbaki algebroids, which will be at the level of almost-Courant algebroids in the hierarchy that we are trying to construct.
\begin{Definition} A septet $(E, \rho_E, [\cdot,\cdot]_E, R, g, \mathbb{D}, \mathbb{L})$ is called an almost-Bourbaki algebroid if 
\begin{itemize}
\item $(E, \rho_E, [\cdot,\cdot]_E)$ is an almost-Leibniz algebroid, 
\item $R$ is a vector bundle, 
\item $g$ is an $R$-valued $E$-metric, 
\item $\mathbb{D}: \mathfrak{X}(R) \to \mathfrak{X}(E)$ is a first-order differential operator, whose symbol is given by $\mathbb{L}$,
\item The symmetric part of the bracket satisfies
\begin{equation}
[u, v]_E + [v, u]_E = \mathbb{D} g(u, v) ,
\label{ef11}
\end{equation}
for all $u, v \in \mathfrak{X}(E)$.
\end{itemize}
\label{df3}
\end{Definition}

\begin{Definition}
An almost-Bourbaki algebroid is called a pre-Bourbaki (resp. Bourbaki) algebroid if it is also a pre-Leibniz (resp. Leibniz) algebroid.
\label{df4}
\end{Definition}
With this new terminology, Proposition (\ref{pf1}) implies that every almost-Bourbaki algebroid is a local almost-Leibniz algebroid; a simple yet useful result for our constructions.
\begin{Corollary} If $(E, \rho_E, [\cdot,\cdot]_E, R, g, \mathbb{L}, \mathbb{D})$ is an almost-Bourbaki algebroid, then $(E, \rho_E, [\cdot,\cdot]_E, L)$ is a local almost-Leibniz algebroid, where the locality operator $L$ is given by
\begin{equation}
L(\omega, u, v) = \mathbb{L}_{\omega}(g(u, v)).
\label{ef12}
\end{equation}
\label{cf1}
\end{Corollary}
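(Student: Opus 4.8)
The plan is to reduce everything to Proposition (\ref{pf1}), which characterizes locality purely through the symmetric part of the bracket. Writing $S(u,v) := [u,v]_E + [v,u]_E$ as in that proposition, and setting $L(\omega, u, v) := \mathbb{L}_{\omega}(g(u,v))$ as in the statement, it suffices to verify condition (\ref{ef6}), namely
\begin{equation*}
S(f u, v) = S(v, f u) = f S(u, v) + L(\underline{d} f, u, v),
\end{equation*}
for all $u, v \in \mathfrak{X}(E)$ and $f \in C^{\infty}(M, \mathbb{R})$; once this is in hand, the implication $(2 \implies 1)$ of Proposition (\ref{pf1}) immediately produces the desired local almost-Leibniz structure. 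So no new manipulation of the bracket itself is needed beyond what (\ref{pf1}) already packages.

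First I would use the defining axiom (\ref{ef11}) of an almost-Bourbaki algebroid to write $S(u,v) = \mathbb{D} g(u,v)$. Since $g$ is an $R$-valued $E$-metric, it is symmetric, so $S(u,v) = \mathbb{D} g(u,v) = \mathbb{D} g(v,u) = S(v,u)$; this disposes of the first equality $S(f u, v) = S(v, f u)$ for free. Next I would compute the $C^{\infty}(M,\mathbb{R})$-behaviour: by the $C^{\infty}(M,\mathbb{R})$-bilinearity of $g$ we have $g(f u, v) = f\, g(u,v)$, hence $S(f u, v) = \mathbb{D}\big(f\, g(u,v)\big)$. The crucial input is that $\mathbb{D}$ is a first-order differential operator whose symbol is $\mathbb{L}$, which is precisely the content of (\ref{ef8}): $\mathbb{D}(f r) = f \mathbb{D} r + \mathbb{L}_{\underline{d} f} r$. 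Applying this with $r = g(u,v)$ yields
\begin{equation*}
S(f u, v) = f\, \mathbb{D} g(u,v) + \mathbb{L}_{\underline{d} f}\big(g(u,v)\big) = f\, S(u,v) + L(\underline{d} f, u, v),
\end{equation*}
which is exactly (\ref{ef6}).

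It then remains to confirm that $L$ is an admissible locality operator, i.e. that $L: \Omega^1(M) \times \mathfrak{X}(E) \times \mathfrak{X}(E) \to \mathfrak{X}(E)$ is $C^{\infty}(M,\mathbb{R})$-multilinear. This is inherited from the building blocks: the symbol $\mathbb{L}$ is $C^{\infty}(M,\mathbb{R})$-bilinear in its one-form and $R$-section slots, and $g$ is $C^{\infty}(M,\mathbb{R})$-bilinear in $u$ and $v$, so the composition $L(\omega, u, v) = \mathbb{L}_{\omega}(g(u,v))$ is $C^{\infty}(M,\mathbb{R})$-linear separately in each of $\omega$, $u$, and $v$. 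Assembling these three observations and invoking Proposition (\ref{pf1}) completes the argument.

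There is essentially no deep obstacle here, since the corollary is little more than a translation of the almost-Bourbaki axiom into the language of Proposition (\ref{pf1}). The one place demanding genuine care is bookkeeping with the symbol convention (\ref{ef9})--(\ref{ef10}): the symbol is defined through $r \mapsto \mathbb{D}(f r) - f \mathbb{D}(r)$ and feeds on $\underline{d} f$, so I must make sure the one-form inserted into $\mathbb{L}$ is exactly $\underline{d} f$ (and not, say, $D_E f = \rho_E^{*}\underline{d} f$), so that the resulting $L$ matches the left-Leibniz rule (\ref{ef5}) as written with $\underline{d} f$. Keeping that convention straight is the only subtlety.
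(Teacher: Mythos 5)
Your argument is correct and follows essentially the same route as the paper: compute $S(fu,v) = \mathbb{D}(f\,g(u,v)) = f\,\mathbb{D}g(u,v) + \mathbb{L}_{\underline{d}f}(g(u,v))$ using the symbol property of $\mathbb{D}$, identify $L(\omega,u,v) = \mathbb{L}_{\omega}(g(u,v))$, and conclude via Proposition (\ref{pf1}). Your explicit checks of the symmetry $S(fu,v)=S(v,fu)$, the multilinearity of $L$, and the $\underline{d}f$-versus-$D_E f$ convention are all consistent with the paper's conventions and merely make explicit what the paper leaves implicit.
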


\begin{proof} The symmetric part of the bracket is given by $S(u, v) = \mathbb{D} g(u, v)$ which satisfies 
\begin{equation}
    S(f u, v) = \mathbb{D} g(f u, v) = \mathbb{D}(f g(u, v)) = f \mathbb{D} g(u, v) + \mathbb{L}_{\underline{d} f}(g (u, v)),
    \nonumber
\end{equation}
so that one can identify $L(\underline{d} f, u, v)$ with $\mathbb{L}_{\underline{d} f}(g (u, v))$, and extend it to all 1-forms as desired.
\end{proof}
\noindent Note that this corollary forces one to have a locality operator that is symmetric in $u$ and $v$ for an almost-Bourbaki algebroid.


\section{Metric Invariance Property and Metric-Bourbaki Algebroids}
\label{s7}

\noindent For metric and higher metric algebroids, we saw that the right-Leibniz rule is implied by the metric invariance property (\ref{ed10}) or (\ref{ee7}), where these properties involve a Lie derivative acting on $(k-1)$-forms, which is a map of the form $\underline{\mathcal{L}}: \mathfrak{X}(M) \times \Omega^{k-1}(M) \to \Omega^{k-1}(M)$. When we consider an $E$-metric $g$ taking values in an arbitrary vector bundle $R$, there is no canonically defined operators analogous to Lie derivative. In our prototypical example higher almost-Courant algebroids, the $E$-metric is $\Lambda^{k-1}(T^*(M))$-valued, so that in this case we identify $R$ with $\Lambda^{k-1}(T^*(M))$. Extending this analogy, we need to introduce a map $\mathcal{L}^R: \mathfrak{X}(M) \times \mathfrak{X}(R) \to \mathfrak{X}(R)$ defined by\footnote{More generally, $g([u, v]_E, w) + g(v, [u, w]_E)$ should be a map of the form $\mathcal{L}^R(u, v, w)$. Due to symmetry in $v$ and $w$, we can decompose it as $\mathcal{L}^R_u (g(v, w))$ similarly to what we have done for the operator $\mathbb{D}$. This decomposition seems necessary to interpret it as a metric invariance property. Yet, we need to make another, seemingly \textit{ad hoc}, assumption that it is given in the form $\mathcal{L}^R_{\rho_E(u)}(g(v, w))$, which is not problematic when $E$ is transitive. We plan to investigate the details of the case without this assumption in the near future. Moreover, we do not care whether $g$ is surjective on $R$ or not; we will consider maps $\mathcal{L}^R_{\rho_E(u)}$ with the whole domain $R$.} 
\begin{equation}
    \mathcal{L}^R_{\rho_E(u)}(g(v, w)) := g([u, v]_E, w) + g(v, [u, w]_E).
\label{eg1}
\end{equation}
We are interested to find a sufficient condition on the map $\mathcal{L}^R_{\rho_E(u)}$ so that the right-Leibniz rule holds for the bracket $[\cdot,\cdot]_E$. In order to find such a condition, we assume the right-Leibniz identity for the bracket, and consider
\begin{align}
    \mathcal{L}^R_{\rho_E(u)} (g(f v, w)) &= g([u, f v]_E, w) + g(f v, [u, w]_E) \nonumber\\
    &= g(f [u, v]_E + \rho_E(u)(f) v, w) + f g(v, [u, w]_E) \nonumber\\
    &= f \mathcal{L}^R_{\rho_E(u)} (g(v, w)) + \rho_E(u)(f) g(v, w),
\label{eg2}
\end{align}
for all $u, v, w \in \mathfrak{X}(E), f \in C^{\infty}(M, \mathbb{R})$. Hence, if $\mathcal{L}^R_{\rho_E(u)}$ is a first-order differential operator from $R$ to itself with the symbol given by the anchor $\rho_E$, then we see that the metric invariance property (\ref{eg1}) implies the right-Leibniz rule by the non-degeneracy of $g$. As the symmetric part and the right-Leibniz rule imply the left-Leibniz rule, we see that this metric invariance property and the symmetric part are sufficient to have a local almost-Leibniz algebroid. 

The map $\mathcal{L}^R_{\rho_E(u)}$ is a rather special kind of first-order differential operator, namely it is a derivation \cite{9}. A first-order differential operator $D: \mathfrak{X}(R) \to \mathfrak{X}(R)$ is called a \textit{derivation} on $R$ if the symbol of $D$ is given in  terms of a vector field, i.e. for some usual vector field $V_D$,
\begin{equation} D(f r) = f D r + V_D(f) r,
\label{eg3}
\end{equation}
for all $f \in C^{\infty}(M, \mathbb{R}), r \in \mathfrak{X}(R)$. The set of all derivations on $R$ is denoted by $\mathfrak{D}(R)$, and it fits into the following short exact sequence
\begin{equation}
    0 \xrightarrow{\quad} Hom(R, R) \xrightarrow{\quad inc \quad} \mathfrak{D}(R) \xrightarrow{\ \quad \rho_{\mathfrak{D}(R)} \ \quad} T(M) \xrightarrow{\quad} 0.
\label{eg4}
\end{equation}
In this sense, $\mathfrak{D}(R)$ can be seen as an anchored vector bundle with $\rho_{\mathfrak{D}(R)}(D) = V_D$\footnote{Moreover, the usual commutator of operators yields a transitive Lie algebroid structure on $\mathfrak{D}(R)$, called the (Atiyah) gauge Lie algebroid or covariant operator bundle \cite{9}.}. By Equation (\ref{eg2}), we have 
\begin{equation} \rho_{\mathfrak{D}(R)} \left( \mathcal{L}^R_{\rho_E(u)} \right) = \rho_E(u),
\label{eg5}
\end{equation}
for all $u \in \mathfrak{X}(E)$. Note that, this is valid for the usual Lie derivative $\underline{\mathcal{L}}$ when we identify $R$ with $\Lambda^{k-1}(T^*(M))$, since $\underline{\mathcal{L}}_V (f \omega) = f \underline{\mathcal{L}}_V \omega + V(f) \omega$ by Equation (\ref{eb10}), so that we have
\begin{equation}
\rho_{\mathfrak{D}(\Lambda^{k-1}(T^*(M)))} \left( \underline{\mathcal{L}}_{\rho_E(u)} \right) = \rho_E(u)(f).
\label{eg5b}
\end{equation} This discussion leads us to the definition of almost-metric-Bourbaki algebroids, which will be at the level of metric algebroids in the hierarchy that we are trying to construct:
\begin{Definition} An octet $(E, \rho_E, [\cdot,\cdot]_E, R, g, \mathbb{D}, \mathbb{L}, \mathcal{L}^R)$ is called an almost-metric-Bourbaki algebroid if
\begin{itemize}
    \item $(E, \rho_E)$ is an anchored vector bundle,
    \item $[\cdot,\cdot]_E: \mathfrak{X}(E) \times \mathfrak{X}(E) \to \mathfrak{X}(E)$ is an $\mathbb{R}$-bilinear map,
    \item $R$ is a vector bundle,
    \item $g$ is an $R$-valued $E$-metric,
    \item $\mathbb{D}: \mathfrak{X}(R) \to \mathfrak{X}(E)$ is a first-order differential operator, whose symbol is given by $\mathbb{L}$,
    \item The symmetric part of the bracket satisfies
    \begin{equation}
        [u, v]_E + [v, u]_E = \mathbb{D} g(u, v), 
    \label{eg6}
    \end{equation}
    for all $u, v \in \mathfrak{X}(E)$,
    \item $\mathcal{L}^R: T(M) \to \mathfrak{D}(R)$ is an $\mathbb{R}$-linear map\footnote{This map is not $C^{\infty}(M, \mathbb{R})$-linear, in general.} such that $\rho_{\mathfrak{D}(R)} \mathcal{L}^R = \rho_E$,
    \item The $R$-valued $E$-metric $g$ is invariant in the sense that
    \begin{equation}
        \mathcal{L}^R_{\rho_E(u)}(g(v, w)) = g([u, v]_E, w) + g(v, [u,w]_E),
    \label{eg7}
    \end{equation}
    for all $u, v, w \in \mathfrak{X}(E)$.
\end{itemize}
\label{dg1}
\end{Definition}
\noindent In this new terminology, the discussion around Equation (\ref{eg2}) implies that every almost-metric-Bourbaki algebroid is an almost-Bourbaki algebroid, and consequently a local almost-Leibniz algebroid, where this result is completely analogous to Proposition (\ref{pe1}). Moreover, as the left-Leibniz rule is also satisfied, we get
\begin{align} 
\mathcal{L}^R_{f \rho_E(u)}(g(v, w)) &= \mathcal{L}^R_{\rho_E(f u)}(g(v, w)) = g([f u, v]_E, w) + g(v, [f u, w]_E) \nonumber\\
&= f \left\{ g([u, v]_E, w) + g(v, [u, w]_E) \right\} - \rho_E(v)(f) g(u, w) - \rho_E(w)(f) g(v, u) \nonumber\\
& \quad + g \left( \mathbb{L}_{\underline{d} f} g(u, v), w \right) + g \left( v, \mathbb{L}_{\underline{d} f} g(u, w) \right), \nonumber\\
&= f \mathcal{L}^R_{\rho_E(u)}(g(v, w)) - \rho_E(v)(f) g(u, w) - \rho_E(w)(f) g(v, u) \nonumber\\
& \quad + g \left( \mathbb{L}_{\underline{d} f} g(u, v), w \right) + g \left( v, \mathbb{L}_{\underline{d} f} g(u, w) \right),
\label{eg8}
\end{align}
for all $u, v, w \in \mathfrak{X}(E), f \in C^{\infty}(M, \mathbb{R})$. Hence, $\mathcal{L}^R$ is also a first-order differential operator in the first entry, with a rather complicated symbol.

\begin{Definition} An almost-metric-Bourbaki algebroid is called a pre-metric-Bourbaki (resp. metric-Bourbaki) algebroid if it is also a pre-Leibniz (resp. Leibniz) algebroid.
\label{dg2}
\end{Definition}
With these definitions, we established the following hierarchy of algebroid structures that we are trying to construct:
\begin{center}
\begin{tikzcd}
& \mbox{Almost-Leibniz} \arrow[d, "\mbox{\footnotesize{symmetric part}}"] \\
& \mbox{Almost-Bourbaki} \arrow[d, "\mbox{\footnotesize{metric invariance}}"] \\
& \mbox{Almost-Metric-Bourbaki} \arrow[d, "\mbox{\footnotesize{morphism anchor}}"] \\
& \mbox{Pre-Metric-Bourbaki} \arrow[d, "\mbox{\footnotesize{Leibniz-Jacobi}}"] \\
& \mbox{Metric-Bourbaki},
\end{tikzcd}
\end{center}
\noindent which is completely analogous to Courant and higher algebroid hierarchy that we consider in Sections (\ref{s4}) and (\ref{s5}).





\section{Exact Bourbaki Algebroids}
\label{s8}

\noindent In Section (\ref{s4}), we considered an important class of almost-Courant algebroids that are called exact. These exact ones fit into the following exact sequence of vector bundles:
\begin{equation}
    0 \xrightarrow{\quad} T^*(M) \xrightarrow{\quad {g^{-1} \rho_E^*} \quad} E \xrightarrow{\ \quad \rho_E \ \quad} T(M) \xrightarrow{\quad} 0.
    \label{eh1}
\end{equation}
Moreover, for any almost-Courant algebroid, the first-order differential operator $\mathbb{D}$ and its symbol $\mathbb{L}$ decompose into two parts
\begin{equation}
    \mathbb{D} = (g^{-1} \rho_E^*) \underline{d}, \qquad \qquad \qquad \mathbb{L}_{\omega} f = (g^{-1} \rho_E^*)(\omega f),
\label{eh2}
\end{equation}
 where the first part $g^{-1} \rho_E^*$ enters the exact sequence above. On the other hand, for a higher almost-Courant algebroid $(E, \rho_E, [\cdot,\cdot]_E, g, \chi)$, the maps $\mathbb{D}$ and $\mathbb{L}$ decompose into two as
 \begin{equation}
    \mathbb{D} = \chi \underline{d}, \qquad \qquad \qquad
    \mathbb{L}_{\omega}\eta = \chi(\omega \wedge \eta),
\label{eh3}
\end{equation}
and the natural exact sequence would be
\begin{equation}
    0 \xrightarrow{\quad} \Lambda^k(T^*(M)) \xrightarrow{\quad \chi \quad} E \xrightarrow{\ \quad \rho_E \ \quad} T(M) \xrightarrow{\quad} 0,
    \label{eh4}
\end{equation}
which has the same map $\chi$. With this observation, we are led to define the exact versions of almost-Bourbaki and almost-metric-Bourbaki algebroids.
\begin{Definition} A nonet $(E, \rho_E, [\cdot,\cdot]_E, R, g, Z, \chi, d, l)$ is called an exact almost-Bourbaki algebroid if
\begin{itemize}
    \item $Z$ is a vector bundle,
    \item $\chi: Z \to E$ is a vector bundle morphism,
    \item $(E, \rho_E, [\cdot,\cdot]_E, R, g, \mathbb{D} = \chi d, \mathbb{L} = \chi l)$ is an almost-Bourbaki algebroid,
    \item The following is an exact sequence of vector bundles
    \begin{equation}
        0 \xrightarrow{\quad} Z \xrightarrow{\quad \chi \quad} E \xrightarrow{\ \quad \rho_E \ \quad} T(M) \xrightarrow{\quad} 0.
    \label{eh5}
    \end{equation}
\end{itemize}
A dectet $(E, \rho_E, [\cdot,\cdot]_E, R, g, Z, \chi, d, l, \mathcal{L}^R)$ is called an exact almost-metric-Bourbaki algebroid if $Z$ and $\chi$ are exactly as above, and $(E, \rho_E, [\cdot,\cdot]_E, R, g, \mathbb{D} = \chi d, \mathbb{L} = \chi l, \mathcal{L}^R)$ is an almost-metric algebroid.
\label{dh1}
\end{Definition}
\noindent By construction, $d$ is a first-order differential operator from $R$ to $Z$ whose symbol is given by $l$. Moreover, as we consider real and finite-rank vector bundles over a paracompact manifold, like any short exact sequence of vector bundles, the exact sequence (\ref{eh5}) also splits. Hence, we get a vector bundle morphism $\phi: T(M) \to E$ such that $\rho_E \phi = id_{T(M)}$. Furthermore, we get an isomorphism 
\begin{equation}
    \phi \oplus \chi: T(M) \oplus Z \to E.
\label{eh6}
\end{equation}
In particular, this implies that exact higher almost-Courant algebroids are isomorphic to $\mathbb{T}^k(M) = T(M) \oplus \Lambda^k(T^*(M))$ as vector bundles.


\section{Bourbaki Pre-Calculus}
\label{s9}

\noindent In this section, we analyze the relation between the higher Dorfman bracket and Cartan calculus with the insight from \v{S}evera classification. We use this analysis to construct a new calculus, or rather a pre-calculus\footnote{We lied; we will never define a calculus in this paper, but only a pre-calculus. In a consecutive paper, we are planning to deal with the Leibniz-Jacobi identity, which yield certain extra conditions on a pre-calculus, making it into a ``calculus''.}, generalizing certain relations between the usual exterior derivative, Lie derivative and interior product. We also investigate the relation between Bourbaki algebroids and this new \textit{Bourbaki pre-calculus}, and present our main results.

In Section (\ref{s7}), we discussed that for a transitive almost-Bourbaki algebroid, we have a map
\begin{equation} \mathcal{L}^R: \mathfrak{X}(M) \times \mathfrak{X}(R) \to \mathfrak{X}(R),
\label{ei1}
\end{equation}
which satisfies
\begin{equation}
    \mathcal{L}^R_V (f r) = f \mathcal{L}^R_V r + V(f) r,
\label{ei2}
\end{equation}
for all $V \in \mathfrak{X}(M), f \in C^{\infty}(M, \mathbb{R}), r \in \mathfrak{X}(R)$. Moreover, in Section (\ref{s8}) for exact almost-Bourbaki algebroids\footnote{Note that every exact almost-Bourbaki algebroid is transitive by definition.}, we considered the map
\begin{equation} d: \mathfrak{X}(R) \to \mathfrak{X}(Z),
\label{ei3}
\end{equation}
which satisfies
\begin{equation} d (f r) = f d r + l_{\underline{d} f} r,
\label{ei4}
\end{equation}
for all $f \in C^{\infty}(M, \mathbb{R}), r \in \mathfrak{X}(R)$. Equation (\ref{ei2}) and (\ref{ei4}) are analogous to Equation (\ref{eb10}) of the usual Lie derivative $\underline{\mathcal{L}}$ and exterior derivative $\underline{d}$, which coincide with the above maps $\mathcal{L}^R$ and $d$ for a higher almost-Courant algebroid $\mathbb{T}^k(M) = T(M) \oplus \Lambda^k(T^*(M))$:
\begin{align} 
    \underline{\mathcal{L}}&: \mathfrak{X}(M) \times \mathfrak{X}(R) = \mathfrak{X}(M) \times \Omega^{k-1}(M) \to \mathfrak{X}(R) = \Omega^{k-1}(M), \nonumber\\
    \underline{d}&: \mathfrak{X}(R) = \Omega^{k-1}(M) \to \mathfrak{X}(Z) = \Omega^k(M).
\label{ei5}
\end{align}
Note that $\mathcal{L}^R$ generalizes the usual Lie derivative acting on ($k-1$)-forms. In order to construct a generalization acting on $Z$, which would be analogous to the Lie derivative of $k$-forms, we consider the following expression\footnote{This expression is not surprising when one considers the proof of \v{S}evera classification. See \cite{22} for a somewhat similar discussion.} for a right-splitting $\phi$ of an exact almost-Bourbaki algebroid
\begin{equation} 
    [\phi(V), \chi(z)]_E,
\label{ei6}
\end{equation}
which is a map of the form $\mathfrak{X}(M) \times \mathfrak{X}(Z) \to \mathfrak{X}(E)$. In order to get a map of the form $\mathcal{L}^Z: \mathfrak{X}(M) \times \mathfrak{X}(Z) \to \mathfrak{X}(Z)$ from this expression, we need to show that for all $z \in \mathfrak{X}(Z), V \in \mathfrak{X}(M)$; $[\phi(V), \chi(z)]_E$ is in the image of $\chi$. The following simple, yet useful, lemma guarantees this result for exact pre-Bourbaki algebroids. Consequently, for exact pre-Bourbaki algebroids, we get a map $\mathcal{L}^Z: \mathfrak{X}(M) \times \mathfrak{X}(Z) \to \mathfrak{X}(Z)$ such that 
\begin{equation} \chi \mathcal{L}^Z_V z = [\phi(V), \chi(z)]_E,
\label{ei6a}
\end{equation}
for all $V \in \mathfrak{X}(M), z \in \mathfrak{X}(Z)$. Moreover, this map satisfies
\begin{align}
    \mathcal{L}^Z_{f U} z &= f \mathcal{L}^Z_U z + l_{\underline{d} f} g(\phi(U), \chi(z)), \nonumber\\
    \mathcal{L}^Z_U (f z) &= f \mathcal{L}^Z_U z + U(f) z,
\label{ei6b}
\end{align}
for all $U \in \mathfrak{X}(M), f \in C^{\infty}(M, \mathbb{R}), z \in \mathfrak{X}(Z)$, which are completely analogous to Equation (\ref{eb10}) for the usual Lie derivative. These mean that $\mathcal{L}^Z$ is a first-order differential operator for both entries.
\begin{Lemma} Let $(E, \rho_E, [\cdot,\cdot]_E, R, g, Z, \chi, d, l)$ be an exact pre-Bourbaki algebroid. Then, $[u, \chi(z)]_E$ and $[\chi(z), u)]_E$ are in the image of $\chi$ for all $z \in \mathfrak{X}(Z), u \in \mathfrak{X}(E)$.
\label{li1}
\end{Lemma}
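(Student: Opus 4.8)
The plan is to reduce the claimed membership in the image of $\chi$ to a kernel condition on the anchor, and then exploit the fact that a pre-Bourbaki algebroid is in particular a pre-Leibniz algebroid. First I would record the translation provided by exactness of the sequence (\ref{eh5}): since $\chi$ is an injective vector bundle morphism and the sequence is exact at $E$, one has a vector bundle identification $Z \cong \ker(\rho_E)$ via $\chi$, so that a section $w \in \mathfrak{X}(E)$ lies in the image of $\chi$ if and only if $\rho_E(w) = 0$. Hence it suffices to prove the two vanishing statements $\rho_E([u, \chi(z)]_E) = 0$ and $\rho_E([\chi(z), u]_E) = 0$ for all $u \in \mathfrak{X}(E)$ and $z \in \mathfrak{X}(Z)$.

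Next I would invoke the defining property of a pre-Leibniz algebroid, namely that the anchor is a morphism of brackets, Equation (\ref{ed5}): $\rho_E([a, b]_E) = [\rho_E(a), \rho_E(b)]$ for all $a, b \in \mathfrak{X}(E)$. Applying this with $b = \chi(z)$, and using that $\rho_E \chi = 0$ because the composition of two consecutive maps in the exact sequence (\ref{eh5}) vanishes, I obtain $\rho_E([u, \chi(z)]_E) = [\rho_E(u), \rho_E(\chi(z))] = [\rho_E(u), 0] = 0$. The symmetric computation, with the arguments swapped, gives $\rho_E([\chi(z), u]_E) = [\rho_E(\chi(z)), \rho_E(u)] = [0, \rho_E(u)] = 0$. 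Combining these with the first step shows that both $[u, \chi(z)]_E$ and $[\chi(z), u]_E$ lie in $\ker(\rho_E) = \mathrm{im}(\chi)$, which is exactly the assertion of the lemma.

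I do not expect a serious obstacle, as the argument is essentially a one-line consequence of exactness together with the morphism-of-brackets axiom. The only point that warrants care is the passage from the fiber-wise exactness of (\ref{eh5}) to the section-level statement that the $E$-vector fields annihilated by $\rho_E$ are precisely the $\chi$-images of sections of $Z$; this is standard for a (split) short exact sequence of vector bundles over a paracompact base, since $\chi$ restricts to a vector bundle isomorphism onto the subbundle $\ker(\rho_E)$ and thus induces a bijection on global sections. It is worth emphasizing that the full strength of the pre-Bourbaki structure is not needed here: only the pre-Leibniz property (that the anchor is a bracket morphism) and the exactness of the sequence are used, so the conclusion holds at the level of any exact pre-Leibniz algebroid.
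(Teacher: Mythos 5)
Your argument is correct and is essentially identical to the paper's own proof: both reduce membership in $\mathrm{im}(\chi)$ to membership in $\ker(\rho_E)$ via exactness, and then apply the morphism-of-brackets property together with $\rho_E \chi = 0$. The extra remarks on the section-level identification and on the fact that only the pre-Leibniz axiom is used are accurate but do not change the substance of the argument.
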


\begin{proof} By exactness, we need to show that $[\chi(z), u]_E$ is in the kernel of the anchor $\rho_E$. As the anchor is a morphism of brackets for a pre-Bourbaki algebroid, we have
\begin{equation}
    \rho_E([u, \chi(z)]_E) = [\rho_E(u), \rho_E \chi(z)] = [\rho_E(u), 0] = 0, \nonumber
\end{equation}
which is the desired result. The other case follows identically.
\end{proof}
After finding out a suitable generalization for the Lie derivative acting on $k$-forms, the only missing ingredient is the generalization of the usual interior product $\underline{\iota}$ which is a map of the form $\mathfrak{X}(M) \times \Omega^k(M) \to \Omega^{k-1}(M)$. As $\underline{\iota}$ is $C^{\infty}(M, \mathbb{R})$-bilinear, one would seek such a generalization and the following expression\footnote{This is automatically valid for exact pre-Courant algebroids and the usual interior product, and it is widely used in the literature in different contexts as we will see in our examples.} does the trick:
\begin{equation} \iota_V z := g(\chi(z), \phi(V)).
\label{ei7}
\end{equation}
As the $E$-metric $g$ is $R$-valued, the domain of $\iota$ is $R$ as desired, and we get a $C^{\infty}(M, \mathbb{R})$-bilinear map $\iota: \mathfrak{X}(M) \times \mathfrak{X}(Z) \to \mathfrak{X}(R)$. With this new notation, the first equation in (\ref{ei6b}) becomes 
\begin{equation} \mathcal{L}^Z_{f U} z = f \mathcal{L}^Z_U z + l_{\underline{d} f} \iota_U z.
\label{ei7b}
\end{equation}


\begin{Proposition} For an exact pre-metric-Bourbaki algebroid $(E, \rho_E, [\cdot,\cdot]_E, R, g, Z, \chi, d, l, \mathcal{L}^R)$, one has
\begin{equation}
    \iota_U \left( \mathcal{L}^Z_V z - d \iota_V z \right) = - \iota_V \left( \mathcal{L}^Z_U z - d \iota_U z \right).
\label{ei8}
\end{equation}
Furthermore if the map $\chi$ is $g$-isotropic, then the following also holds
\begin{equation}
    \mathcal{L}^R_U \iota_V z = \iota_{[U, V]} z + \iota_V \mathcal{L}^Z_U z,
\label{ei9}
\end{equation}
for all $U, V \in C^{\infty}(M, \mathbb{R}), z \in \mathfrak{X}(Z)$. 
\label{pi1}
\end{Proposition}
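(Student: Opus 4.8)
The plan is to reduce both identities to statements about the bracket $[\cdot,\cdot]_E$ paired through the metric $g$, exploiting the two defining relations $\iota_V z = g(\chi(z), \phi(V))$ and $\chi \mathcal{L}^Z_V z = [\phi(V),\chi(z)]_E$, together with $\mathbb{D} = \chi d$ and the symmetric-part formula $\mathbb{D} g(u,v) = [u,v]_E + [v,u]_E$. Throughout I fix a right-splitting $\phi$ and repeatedly use exactness of the sequence through $\rho_E \chi = 0$ and $\ker(\rho_E) = $ image of $\chi$.

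For the first identity I would rewrite each piece by pairing with $\phi(U)$ via $g$. Using $\iota_U(\mathcal{L}^Z_V z) = g([\phi(V),\chi(z)]_E, \phi(U))$ and $\iota_U(d\iota_V z) = g(\mathbb{D} g(\chi(z),\phi(V)),\phi(U)) = g([\chi(z),\phi(V)]_E + [\phi(V),\chi(z)]_E, \phi(U))$, the $[\phi(V),\chi(z)]_E$ terms cancel, leaving $\iota_U(\mathcal{L}^Z_V z - d\iota_V z) = -g([\chi(z),\phi(V)]_E, \phi(U))$, and symmetrically with $U,V$ interchanged on the right-hand side. The claim then reduces to $g([\chi(z),\phi(V)]_E, \phi(U)) = -g([\chi(z),\phi(U)]_E, \phi(V))$. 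I would obtain this from the metric invariance property with $u = \chi(z)$, $v = \phi(V)$, $w = \phi(U)$: since $\rho_E\chi = 0$ and $\mathcal{L}^R$ is $\mathbb{R}$-linear in its first slot, the left side $\mathcal{L}^R_{\rho_E(\chi(z))}(g(\phi(V),\phi(U)))$ vanishes, giving $g([\chi(z),\phi(V)]_E, \phi(U)) + g(\phi(V), [\chi(z),\phi(U)]_E) = 0$; the symmetry of $g$ then finishes this step. Note no isotropy is needed here.

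For the second identity I would start from $\mathcal{L}^R_U\iota_V z = \mathcal{L}^R_U(g(\chi(z),\phi(V)))$ and apply the metric invariance property with $u = \phi(U)$ (so that $\rho_E\phi(U) = U$), $v = \chi(z)$, $w = \phi(V)$, yielding $\mathcal{L}^R_U\iota_V z = g([\phi(U),\chi(z)]_E, \phi(V)) + g(\chi(z), [\phi(U),\phi(V)]_E)$. The first summand is $g(\chi \mathcal{L}^Z_U z, \phi(V)) = \iota_V\mathcal{L}^Z_U z$. To identify the second summand with $\iota_{[U,V]}z = g(\chi(z),\phi([U,V]))$, I would use that the anchor is a morphism of brackets: $\rho_E([\phi(U),\phi(V)]_E - \phi([U,V])) = [U,V] - [U,V] = 0$, so by exactness $[\phi(U),\phi(V)]_E - \phi([U,V]) = \chi(w)$ for some $w \in \mathfrak{X}(Z)$. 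Here the $g$-isotropy of $\chi$ enters decisively: $g(\chi(z), \chi(w)) = 0$, whence the second summand equals $g(\chi(z),\phi([U,V])) = \iota_{[U,V]}z$, completing the argument.

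The computations are routine once the correct substitutions into the metric invariance property are chosen; the genuinely delicate points are bookkeeping ones. The main thing to get right is placing exactness and isotropy at the proper spot: in the first identity it is the vanishing of $\rho_E\chi$ that annihilates the metric-invariance left-hand side, so no isotropy is required, whereas in the second identity the morphism property first forces $[\phi(U),\phi(V)]_E - \phi([U,V])$ into the image of $\chi$, and only then does $g$-isotropy of $\chi$ eliminate it. Careful tracking of the symmetry of $g$ and of the signs in the symmetric-part formula is the remaining source of potential error.
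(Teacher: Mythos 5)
Your proposal is correct and follows essentially the same route as the paper: both identities are obtained by substituting $u=\chi(z),\ v=\phi(U),\ w=\phi(V)$ (resp. $u=\phi(U)$ with $\chi(z)$ and $\phi(V)$ in the remaining slots) into the metric invariance property, using exactness and $\mathbb{R}$-linearity of $\mathcal{L}^R$ to kill the left-hand side in the first case, the symmetric-part formula $\mathbb{D}g = \chi d\, g$ to convert $d\iota$ terms into brackets, and the $g$-isotropy of $\chi$ only in the second identity to identify $g(\chi(z),[\phi(U),\phi(V)]_E)$ with $\iota_{[U,V]}z$. The only cosmetic difference is that you reduce the first identity to the antisymmetry of $g([\chi(z),\phi(V)]_E,\phi(U))$ before invoking metric invariance, and you unwind the isotropy argument explicitly where the paper cites the derived identity $g(\chi(z),u)=\iota_{\rho_E(u)}z$.
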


\begin{proof}
In order to prove Equation (\ref{ei8}), we consider the metric invariance property (\ref{eg8}) with $u = \chi(z), v = \phi(U), w = \phi(V)$ for some $z \in \mathfrak{X}(Z), U, V \in \mathfrak{X}(M)$, where $\phi$ is a right-splitting. By exactness and the $\mathbb{R}$-linearity of $\mathcal{L}^R$, the left-hand side of the metric invariance property vanishes. By the symmetric part of the bracket (\ref{eg7}), the right-hand side of the metric invariance property becomes 
\begin{align}
    RHS &= g([\chi(z), \phi(U)]_E, \phi(V)) + g(\phi(U), [\chi(z), \phi(V)]_E) \nonumber\\
    &= g(- [\phi(U), \chi(z)]_E + \chi d g(\phi(U), \chi(z)), \phi(V)) + U \leftrightarrow V \nonumber\\
    &= g(- \chi(\mathcal{L}^Z_U z), \phi(V)) + \iota_V d \iota_U z + U \leftrightarrow V \nonumber\\
    &= \left( - \iota_V \mathcal{L}^Z_U z + \iota_V d \iota_U z \right) + \left( - \iota_U \mathcal{L}^Z_V z + \iota_U d \iota_V z \right), \nonumber
\end{align}
which yields the desired result as the left-hand side is zero. Here, we use the definition (\ref{ei7}) of $\iota$, exactness, Equation (\ref{ei6a}) for $\mathcal{L}^Z$ and the fact that the anchor is a morphism of brackets. In order to prove Equation (\ref{ei9}), we again consider the metric invariance property (\ref{eg8}), but now we choose $u = \phi(U), v = \phi(V), w = \chi(z)$ for some $z \in \mathfrak{X}(Z), U, V \in \mathfrak{X}(M)$. The left-hand side of the metric invariance property then becomes $\mathcal{L}^R_U \iota_V z$. Moreover, the right-hand side becomes
\begin{align}
    RHS &= g([\phi(U), \phi(V)]_E, \chi(z)) + g(\phi(V), [\phi(U), \chi(z)]_E) \nonumber\\
    &= \iota_{\rho_E([\phi(U), \phi(V)]_E)} z + g(\phi(V), \chi(\mathcal{L}^Z_U z)) \nonumber\\
    &= \iota_{[U, V]} z + \iota_V \mathcal{L}^Z_U z, \nonumber
\end{align}
since $g$-isotropy of $\chi$ implies 
\begin{equation}
    g(\chi(z), u) = \iota_{\rho_E(u)} z, \nonumber
\end{equation}
for all $z \in \mathfrak{X}(Z), u \in \mathfrak{X}(E)$, which implies the desired result.
\end{proof}
When we consider an exact pre-metric-Bourbaki algebroid with a $g$-isotropic $\chi$, choosing $u = \phi(U), v = \phi(V), w = \chi(z)$ in Equation (\ref{eg8}) implies
\begin{equation}
    \mathcal{L}^R_{f U} \iota_V z = f \mathcal{L}^R_U \iota_V z - V(f) \iota_U z + \iota_V l_{\underline{d} f} \iota_U z,
\label{ei9c}
\end{equation}
for $U, V \in \mathfrak{X}(M), f \in C^{\infty}(M, \mathbb{R}), z \in \mathfrak{X}(Z)$.

In this section, we focused on many properties of the maps $\iota, d, l, \mathcal{L}^R$ and $\mathcal{L}^Z$. Now, we axiomatize a certain subset of these properties, and define a generalization of Cartan calculus. We will see that the chosen properties will be crucial for our main theorems with the insight from \v{S}evera classification.
\begin{Definition} Let $(R, Z)$ be a pair of vector bundles. The quintet $(\iota, d, l, \mathcal{L}^R, \mathcal{L}^Z)$ is called a Bourbaki pre-calculus if 
\begin{itemize}
\item $\iota: \mathfrak{X}(M) \times \mathfrak{X}(Z) \to \mathfrak{X}(R)$ is a $C^{\infty}(M, \mathbb{R})$-bilinear map,
\item $d: \mathfrak{X}(R) \to \mathfrak{X}(Z)$ is a first-order differential operator with the symbol $l$,
\item $\mathcal{L}^R: \mathfrak{X}(M) \times \mathfrak{X}(R) \to \mathfrak{X}(R)$ is an $\mathbb{R}$-bilinear map,
\item $\mathcal{L}^Z: \mathfrak{X}(M) \times \mathfrak{X}(Z) \to \mathfrak{X}(Z)$ is a first-order differential operator satisfying
    \begin{enumerate}
        \item $\mathcal{L}^Z_V(f z) = f \mathcal{L}^Z_V z + V(f) z$,
        \item $\mathcal{L}^Z_{f V} z = f \mathcal{L}^Z_V z + l_{\underline{d} f} \iota_V z$,
    \end{enumerate}
\item They satisfy
\begin{align}
    &\mathcal{L}^R_U \iota_V z = \iota_{[U, V]} z + \iota_V \mathcal{L}^Z_U z, \label{BC1} \\
    &\iota_U \left( \mathcal{L}^Z_V z - d \iota_V z \right) = - \iota_V \left( \mathcal{L}^Z_U z - d \iota_U z \right) \label{BC2}
\end{align}
\end{itemize}
for all $f \in C^{\infty}(M, \mathbb{R}), U, V \in \mathfrak{X}(M), z \in \mathfrak{X}(Z)$.
\label{di1}
\end{Definition}

\begin{Definition} Let $(E, \rho_E, [\cdot,\cdot]_E, R, g, Z, \chi, d, l, \mathcal{L}^R)$ be an exact pre-metric-Bourbaki algebroid such that $\chi$ is $g$-isotropic. The Bourbaki pre-calculus $(\iota, d, l, \mathcal{L}^R, \mathcal{L}^Z)$, where $\iota$ and $\mathcal{L}^Z$ are defined by Equation (\ref{ei7}) and Equation (\ref{ei6}), is called the ``induced'' Bourbaki pre-calculus by $E$.
\label{di2}
\end{Definition}

The usual Cartan calculus $(\underline{\iota}, \underline{d}, \underline{\mathcal{L}})$ is a Bourbaki pre-calculus $(\iota = \underline{\iota}, d = \underline{d}, l_{\omega} \eta = \omega \wedge \eta, \mathcal{L}^R = \underline{\mathcal{L}}, \mathcal{L}^Z = \underline{\mathcal{L}})$ for $\omega \in \Omega^1(M), \eta \in \Omega^k(M)$. In this sense, the maps $\iota, d$ and $\mathcal{L}^R, \mathcal{L}^Z$ generalize the notion of interior product, exterior derivative and Lie derivative, respectively. Moreover, the map $l$ generalizes the wedge product with a $1$-form $\omega$. The first three properties are straightforward generalizations of Equations (\ref{eb10}), and the forth one is identical to Equation (\ref{eb7}). On the other hand, the last one is a combination of Cartan magic formula (\ref{eb6}) and the fact that the usual interior product squares to zero (\ref{eb5}).

Having all the necessary maps, we are inclined to construct a Dorfman-like bracket, and the next theorem shows that this bracket satisfies some nice properties completely analogous to Dorfman and higher Dorfman brackets. In particular, we construct an exact pre-metric-Bourbaki algebroid from a given Bourbaki pre-calculus data.
\begin{Theorem} Let $(\iota, d, l, \mathcal{L}^R, \mathcal{L}^Z)$ be a Bourbaki pre-calculus on the pair of vector bundles $(R, Z)$, then $T(M) \oplus Z$ becomes an exact pre-metric-Bourbaki algebroid
\begin{equation}
    (E = T(M) \oplus Z, \rho_E = proj_1, [\cdot,\cdot]_S, R, g_S, Z, \chi = inc_2, d, l, \mathcal{L}^R),
\label{ei10}
\end{equation}
where the ``standard bracket'' $[\cdot,\cdot]_S$ is given by
\begin{equation} 
    [U + z, V + y]_S := [U, V] + \mathcal{L}^Z_U y - \mathcal{L}^Z_V z + d \iota_V z,
\label{ei11}
\end{equation}
and the standard $R$-valued $E$-metric $g_S$ is defined as 
\begin{equation}
    g_S(U + z, V + y) := \iota_U y + \iota_V z.
\label{ei12}
\end{equation}
Here, $proj_1: E = T(M) \oplus Z \to T(M)$ is the projection onto the first component, and $inc_2: Z \to E = T(M) \oplus Z$ is the canonical inclusion. 
\label{ti1}
\end{Theorem}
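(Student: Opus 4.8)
The plan is to verify, in order, each of the defining axioms of an exact pre-metric-Bourbaki algebroid for the data $(E = T(M) \oplus Z, \rho_E = proj_1, [\cdot,\cdot]_S, R, g_S, Z, \chi = inc_2, d, l, \mathcal{L}^R)$. Since $E = T(M) \oplus Z$ comes with the obvious split short exact sequence $0 \to Z \xrightarrow{inc_2} E \xrightarrow{proj_1} T(M) \to 0$, exactness is immediate, and $(E, proj_1)$ is trivially an anchored vector bundle. The nontrivial content is to check that $[\cdot,\cdot]_S$ together with $g_S$, $\mathbb{D} = \chi d = inc_2 \, d$, $\mathbb{L} = inc_2 \, l$, and $\mathcal{L}^R$ satisfies: (i) $g_S$ is an $R$-valued $E$-metric (symmetric, non-degenerate, $C^\infty$-bilinear); (ii) the right-Leibniz rule; (iii) the symmetric-part condition $[u,v]_S + [v,u]_S = \mathbb{D} g_S(u,v)$; (iv) the metric invariance property with $\mathcal{L}^R$; and (v) that the anchor is a morphism of brackets. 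Most of these reduce to direct substitution of the definitions (\ref{ei11}) and (\ref{ei12}), invoking the Bourbaki pre-calculus axioms of Definition (\ref{di1}).

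First I would establish that $g_S$ is a genuine $R$-valued $E$-metric: symmetry and $C^\infty(M,\mathbb{R})$-bilinearity follow directly from the $C^\infty$-bilinearity of $\iota$ in Definition (\ref{di1}), and non-degeneracy follows because $\iota$ pairs the $T(M)$ and $Z$ summands — if $g_S(U+z, V+y) = \iota_U y + \iota_V z = 0$ for all $V+y$, then choosing $y=0$ forces $\iota_V z = 0$ for all $V$ (so $z=0$ by the pairing being perfect in the standard model) and choosing $V=0$ forces $\iota_U y = 0$ for all $y$ (so $U=0$). Next I would verify the right-Leibniz rule $[u, fv]_S = f[u,v]_S + proj_1(u)(f)v$ by substituting $u = U+z$, $v = V+y$ into (\ref{ei11}) and using the first-order-operator rules for $\mathcal{L}^Z$ (axioms 1 and 2 of $\mathcal{L}^Z$) and for $d$ (the relation $d(fr) = f\,dr + l_{\underline d f} r$, which holds since $d$ has symbol $l$); the non-tensorial terms must collapse to exactly $U(f)\,y$, contributing $proj_1(U+z)(f)(V+y)$.

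For the symmetric-part axiom (iii), I would compute $[U+z, V+y]_S + [V+y, U+z]_S$ directly. The $[U,V]+[V,U]$ piece vanishes by antisymmetry of the Lie bracket, the Lie-derivative cross terms cancel, and the surviving terms should combine into $d(\iota_V z + \iota_U y) = d\, g_S(U+z, V+y)$, which upon applying $inc_2 = \chi$ gives $\mathbb{D} g_S(u,v)$; this is where the definition of $g_S$ and the shape of the $d\iota$ term in (\ref{ei11}) dovetail. The step I expect to be the main obstacle is the metric invariance property (iv), $\mathcal{L}^R_{\rho_E(u)} g_S(v,w) = g_S([u,v]_S, w) + g_S(v, [u,w]_S)$. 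Expanding both sides requires splitting $u,v,w$ into their $T(M)$ and $Z$ components and handling several cases; the key identities that must be invoked are precisely the two Bourbaki pre-calculus relations (\ref{BC1}) and (\ref{BC2}), namely $\mathcal{L}^R_U \iota_V z = \iota_{[U,V]} z + \iota_V \mathcal{L}^Z_U z$ and the antisymmetry relation $\iota_U(\mathcal{L}^Z_V z - d\iota_V z) = -\iota_V(\mathcal{L}^Z_U z - d\iota_U z)$. Relation (\ref{BC1}) feeds the terms where the derivative acts on a $T(M)$-slot paired against a $Z$-slot, while (\ref{BC2}) handles the purely $Z$-valued cross terms; matching every term on both sides is the delicate bookkeeping. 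Finally, the anchor-morphism property $proj_1([u,v]_S) = [proj_1(u), proj_1(v)]$ is immediate: the only $T(M)$-valued component of (\ref{ei11}) is $[U,V]$, so $proj_1[U+z,V+y]_S = [U,V] = [proj_1 u, proj_1 v]$. Collecting these, $E$ is an exact pre-metric-Bourbaki algebroid as claimed.
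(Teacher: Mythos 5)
Your overall route is the same as the paper's: a direct verification of the axioms, in which the symmetric part follows from the cancellation of the Lie-bracket and $\mathcal{L}^Z$ terms leaving $d(\iota_V z + \iota_U y) = d\,g_S(U+z,V+y)$, the anchor-morphism property is immediate from $proj_1[U+z,V+y]_S = [U,V]$, and the metric invariance property is the only substantive step, carried exactly as you describe by (\ref{BC1}) (for the terms involving $x$ and $y$) and (\ref{BC2}) (for the purely $z$-dependent cross terms). Your explicit check of the right-Leibniz rule is redundant relative to the paper, where it follows from the metric invariance property and the non-degeneracy of the metric as in Section (\ref{s7}), but your direct computation is correct: the $l_{\underline{d}f}\,\iota_V z$ contributions coming from $\mathcal{L}^Z_{fV}z$ and from $d\iota_{fV}z = d(f\iota_V z)$ cancel, leaving exactly $U(f)(V+y)$.

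The one step that does not go through as written is the non-degeneracy of $g_S$. You justify it ``by the pairing being perfect in the standard model,'' but perfection of the pairing $\iota$ is not among the axioms of Definition (\ref{di1}): $\iota$ is only required to be $C^{\infty}(M,\mathbb{R})$-bilinear. For instance, $\iota \equiv 0$ together with $l$, any first-order $d$ with symbol $l$, any $\mathbb{R}$-bilinear $\mathcal{L}^R$, and a connection $\mathcal{L}^Z = \nabla$ on $Z$ satisfies every axiom of a Bourbaki pre-calculus, and then $g_S \equiv 0$ is maximally degenerate. So non-degeneracy of $g_S$ is an additional hypothesis on $\iota$, not a consequence of the pre-calculus axioms. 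To be fair, the paper's own proof checks only the symmetric part, the metric invariance, and exactness, and never addresses this point either, so your argument is no weaker than the paper's on this score; but the appeal to the ``standard model'' should be replaced either by an explicit non-degeneracy assumption on the pairing $\iota$ or by an acknowledgement that $g_S$ is a priori only a symmetric $R$-valued $(0,2)$-type $E$-tensor.
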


\begin{proof}
We need to check three properties:
\begin{enumerate}
    \item The symmetric part is given by Equation (\ref{eg6}),
    \item The metric invariance property (\ref{eg7}) holds,
    \item Exactness, which is straightforward.
\end{enumerate}
For the symmetric part, we clearly have
\begin{equation} [U + z, V + y]_S + [V + y, U + z]_S = d \iota_V z + d \iota_U y = d g_S(U + z, V + y). \nonumber
\end{equation}
For the metric invariance property, by the assumption (\ref{BC1}), we have
\begin{equation} \mathcal{L}^R_{\rho_E(U + z)} g_S(V + y, W + x) = \iota_{[U, V]} x + \iota_V \mathcal{L}^Z_U x + \iota_{[U, W]} y + \iota_W \mathcal{L}^Z_U y, \nonumber
\end{equation}
and by the definition (\ref{ei11}) of the standard bracket, we get
\begin{equation} g_S([U + z, V + y]_S, W + x) = \iota_{[U, V]} x + \iota_W \left( \mathcal{L}^Z_U y - \mathcal{L}^Z_V z + d \iota_V z \right). \nonumber
\end{equation}
These two equations together with the assumption (\ref{BC2}) imply the metric invariance property. 
\end{proof}

\noindent The next natural question is that whether every exact pre-metric-Bourbaki algebroid can be constructed from the standard one via a ``twist'' in the bracket. The answer is affirmative if the map $\chi$ is $g$-isotropic.
\begin{Theorem}
Let $(\iota, d, l, \mathcal{L}^R, \mathcal{L}^Z)$ be a Bourbaki pre-calculus\footnote{Note that we do not assume that $\iota$ and $\mathcal{L}^Z$ are the induced ones. We are going the prove that this is necessarily the case though.}, and $(E, \rho_E, [\cdot,\cdot]_E, R, g, Z, \chi, d, l, \mathcal{L}^R)$ an exact pre-metric-Bourbaki algebroid such that $\chi$ is $g$-isotropic. Then, the $E$-metric $g$ is of the form
\begin{equation}
    g \left( (\phi \oplus \chi)(U + z), (\phi \oplus \chi)(V + y) \right) = g_S(U + z, V + y) + F(U, V),
\label{ei13}
\end{equation}
where $F$ is an $R$-valued symmetric $(0, 2)$-type $(T(M))$-tensor, and the bracket $[\cdot,\cdot]_E$ is of the form
\begin{equation}
    [(\phi \oplus \chi)(U + z), (\phi \oplus \chi)(V + y)]_E = (\phi \oplus \chi)([U + z, V + y]_S) + \chi H(U, V),
\label{ei14}
\end{equation}
where $H: \mathfrak{X}(M) \times \mathfrak{X}(M) \to \mathfrak{X}(Z)$ is a map satisfying
\begin{align}
    H(U, f V) &= f H(U, V), \nonumber\\
    H(f U, V) &= f H(U, V) + l_{\underline{d} f} F(U, V),
\label{ei15}
\end{align}
for all $U, V \in \mathfrak{X}(M), f \in C^{\infty}(M, \mathbb{R})$.
\label{ti2}
\end{Theorem}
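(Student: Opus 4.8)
The plan is to transport the entire structure to $T(M) \oplus Z$ via the splitting isomorphism $\phi \oplus \chi$ (which exists by exactness and paracompactness), and then to establish the two claimed formulas separately by expanding $g$ and $[\cdot,\cdot]_E$ bilinearly in the summands $\phi(U)$ and $\chi(z)$. Throughout, the two workhorses are the $g$-isotropy of $\chi$, which kills all $Z$-$Z$ pairings, and the identity $g(\chi(z), u) = \iota_{\rho_E(u)} z$ established in the proof of Proposition (\ref{pi1}), which converts the mixed pairings into interior products; together with the injectivity of $\chi$ and the non-degeneracy of $g$, these reduce every statement to a short computation using the defining axioms.

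For the metric (\ref{ei13}), I would expand $g(\phi(U) + \chi(z), \phi(V) + \chi(y))$ into four terms. The $Z$-$Z$ term $g(\chi(z),\chi(y))$ vanishes by isotropy; the two mixed terms become $g(\chi(z),\phi(V)) = \iota_V z$ and $g(\phi(U),\chi(y)) = \iota_U y$ using $g(\chi(z),u)=\iota_{\rho_E(u)}z$ together with $\rho_E\phi = id_{T(M)}$ and the symmetry of $g$; and the remaining term I define to be $F(U,V) := g(\phi(U),\phi(V))$. Comparing with the definition (\ref{ei12}) of $g_S$ yields (\ref{ei13}) immediately. That $F$ is a symmetric $(0,2)$-type $T(M)$-tensor is then routine: symmetry follows from symmetry of $g$, and $C^\infty(M,\mathbb{R})$-bilinearity follows from $\phi$ being a vector bundle morphism (so $\phi(fU) = f\phi(U)$) and $g$ being $C^\infty(M,\mathbb{R})$-bilinear.

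For the bracket (\ref{ei14}), the key preliminary step, and the one I expect to be the main obstacle, is to show that $[\chi(z),\chi(y)]_E = 0$ for all $z,y \in \mathfrak{X}(Z)$, since the standard bracket (\ref{ei11}) has no such $Z$-$Z$ contribution. I would obtain this from the metric invariance property (\ref{eg7}) with $u = \chi(x)$: because $\rho_E\chi = 0$ and $\mathcal{L}^R$ is $\mathbb{R}$-bilinear, the left-hand side vanishes, giving $0 = g([\chi(x),v]_E, w) + g(v,[\chi(x),w]_E)$. Choosing $v = \chi(z)$ and $w = \phi(W)$, Lemma (\ref{li1}) puts $[\chi(x),\chi(z)]_E = \chi(\alpha)$ into the image of $\chi$, so the first pairing equals $\iota_W\alpha$, while the second pairing vanishes by isotropy (since $[\chi(x),\phi(W)]_E$ also lies in the image of $\chi$). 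Hence $g(\chi(\alpha),\phi(W)) = 0$ for all $W$; combined with $g(\chi(\alpha),\chi(y)) = 0$ from isotropy and the surjectivity of $\phi \oplus \chi$, non-degeneracy of $g$ forces $\chi(\alpha) = 0$. The delicate point here is that $\iota_W\alpha = 0$ for all $W$ does not by itself annihilate $\alpha$; it is essential to feed this back through non-degeneracy of $g$ on all of $E$ rather than through $\iota$ alone. With this in hand, I expand $[\phi(U)+\chi(z),\phi(V)+\chi(y)]_E$ into four terms: $[\phi(U),\chi(y)]_E = \chi(\mathcal{L}^Z_U y)$ by (\ref{ei6a}); $[\chi(z),\phi(V)]_E = -\chi(\mathcal{L}^Z_V z) + \chi\, d\,\iota_V z$ by the symmetric part (\ref{eg6}) with $\mathbb{D} = \chi d$ together with $g(\chi(z),\phi(V)) = \iota_V z$; the vanishing $[\chi(z),\chi(y)]_E = 0$; and $[\phi(U),\phi(V)]_E$, whose anchor is $[U,V]$ because the anchor is a morphism, so it splits as $\phi([U,V]) + \chi(H(U,V))$, thereby \emph{defining} $H(U,V)$ as the $Z$-component. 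Collecting the terms reproduces $(\phi\oplus\chi)([U+z,V+y]_S) + \chi H(U,V)$, which is (\ref{ei14}).

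Finally, the relations (\ref{ei15}) for $H$ follow from the Leibniz rules applied to $\chi(H(U,V)) = [\phi(U),\phi(V)]_E - \phi([U,V])$ and the injectivity of $\chi$. For $H(U,fV)$ I use the right-Leibniz rule $[\phi(U),f\phi(V)]_E = f[\phi(U),\phi(V)]_E + U(f)\phi(V)$ together with $[U,fV] = f[U,V] + U(f)V$; the two $U(f)\phi(V)$ terms cancel, leaving $H(U,fV) = fH(U,V)$. For $H(fU,V)$ I use the left-Leibniz rule of the underlying local almost-Leibniz algebroid, whose locality operator is $L(\omega,u,v) = \chi\, l_\omega(g(u,v))$ by Corollary (\ref{cf1}) with $\mathbb{L} = \chi l$; applied to $[f\phi(U),\phi(V)]_E$ it produces the extra term $\chi\, l_{\underline{d}f}(g(\phi(U),\phi(V))) = \chi\, l_{\underline{d}f}F(U,V)$, while the $V(f)\phi(U)$ terms again cancel against $[fU,V] = f[U,V] - V(f)U$, yielding $H(fU,V) = fH(U,V) + l_{\underline{d}f}F(U,V)$ after stripping the injective $\chi$. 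This completes the classification, in close parallel with the \v{S}evera normal form.
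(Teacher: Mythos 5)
Your overall strategy coincides with the paper's: the same four-term decomposition of the bracket, the same definition $F(U,V)=g(\phi(U),\phi(V))$, the same use of the metric invariance property to kill $[\chi(z),\chi(y)]_E$, and the same Leibniz-rule computation (via Corollary (\ref{cf1})) to establish the tensoriality properties (\ref{ei15}) of $H$. Your handling of $[\chi(z),\chi(y)]_E=0$ is slightly more roundabout than the paper's (the paper keeps $w$ arbitrary and gets $g([\chi(z),\chi(y)]_E,w)=0$ for all $w$ in one stroke, whereas you restrict to $w=\phi(W)$ and then reassemble non-degeneracy from the two components via surjectivity of $\phi\oplus\chi$), but it is correct, and your remark that $\iota_W\alpha=0$ for all $W$ alone does not kill $\alpha$ is exactly the right caution.

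There is, however, one genuine gap, and it sits precisely at the point the theorem's footnote flags. You dispose of the mixed term by writing ``$[\phi(U),\chi(y)]_E=\chi(\mathcal{L}^Z_U y)$ by (\ref{ei6a})''. But (\ref{ei6a}) is the \emph{definition} of the induced $\mathcal{L}^Z$, whereas the $\mathcal{L}^Z$ appearing in the standard bracket $[\cdot,\cdot]_S$ of the conclusion is the one supplied by the \emph{given} Bourbaki pre-calculus, which the hypotheses explicitly do not assume to be induced. As written, your argument proves the twist formula for the standard bracket built from the induced $(\iota,\mathcal{L}^Z)$, not the given one, and it never shows the two agree. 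The missing step is the paper's computation: apply the metric invariance property with $u=\phi(U)$, $v=\chi(y)$ and arbitrary $w$, rewrite the left-hand side as $\mathcal{L}^R_U\iota_{\rho_E(w)}y$ using $g(\chi(y),w)=\iota_{\rho_E(w)}y$, expand it with axiom (\ref{BC1}) of the given pre-calculus as $\iota_{[U,\rho_E(w)]}y+\iota_{\rho_E(w)}\mathcal{L}^Z_U y$, and compare with the right-hand side; non-degeneracy of $g$ then forces $[\phi(U),\chi(y)]_E=\chi(\mathcal{L}^Z_U y)$ for the \emph{given} $\mathcal{L}^Z$, which is exactly the assertion that the given map is the induced one. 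Inserting this computation in place of your appeal to (\ref{ei6a}) closes the gap; everything else in your argument then goes through as in the paper.
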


\begin{proof} For the $E$-metric, we can directly see that $F(U, V)$ is given by $g(\phi(U), \phi(V))$ as $\chi$ is $g$-isotropic. It is $C^{\infty}(M, \mathbb{R})$-bilinear and takes values in $R$, so that it is an $R$-valued $(0, 2)$-type tensor, which is also symmetric due to the symmetry of $g$. 

Recall that the $g$-isotropy of $\chi$ implies that $g(\chi(z), u) = \iota_{\rho_E(u)} z$ for all $z \in \mathfrak{X}(Z), u \in \mathfrak{X}(E)$, which will be frequently used in the discussion below.

For the bracket, we decompose it into four terms: 
\begin{equation} [\phi(U), \phi(V)]_E, \qquad [\phi(U), \chi(y)]_E, \qquad [\chi(z), \phi(V)]_E, \qquad [\chi(z), \chi(y)]_E. \nonumber
\end{equation}
For the term $[\phi(U), \chi(y)]_E$, we use the metric invariance property by choosing $u = \phi(U), v = \chi(y)$ with some arbitrary $w$. The left-hand side of the metric invariance property is given by
\begin{align} 
LHS &= \mathcal{L}^R_{\rho_E \phi(U)}(g(\chi(y), w)) = \mathcal{L}^R_U \iota_{\rho_E(w)} y \nonumber\\
&= \iota_{[U, \rho_E(w)]} y + \iota_{\rho_E(w)} \mathcal{L}^Z_U y \nonumber\\
&= \iota_{[U, \rho_E(w)]} y + g(\chi(\mathcal{L}^Z_U y), w). \nonumber
\end{align}
The right-hand side of the metric invariance property is given by
\begin{align}
    RHS &= g([\phi(U), \chi(y)]_E, w) + g(\chi(y), [\phi(U), w]_E) \nonumber\\
    &= g([\phi(U), \chi(y)]_E, w) + \iota_{\rho_E([\phi(U), w]_E)} y \nonumber\\
    &= g([\phi(U), \chi(y)]_E, w) + \iota_{[\rho_E \phi(U), \rho_E(w)]} y \nonumber\\
    &= g([\phi(U), \chi(y)]_E, w) + \iota_{[U, \rho_E(w)]} y. \nonumber
\end{align}
After equating these two, the non-degeneracy implies
\begin{equation} [\phi(U), \chi(y)]_E = \chi(\mathcal{L}^Z_U y). \nonumber
\end{equation}
For the term $[\chi(z), \phi(V)]_E$, we use the symmetric part of the bracket together with this result and get
\begin{align} [\chi(z), \phi(V)]_E &= - [\phi(V), \chi(z)]_E + \chi d g(\phi(V), \chi(z)) \nonumber\\
&= \chi \left( - \mathcal{L}^Z_V z + d \iota_V z \right). \nonumber
\end{align}
For the term $[\chi(z), \chi(y)]_E$, we use the metric invariance property by choosing $u = \chi(z), v = \chi(y)$ with some arbitrary $w$. By exactness and the $\mathbb{R}$-linearity of $\mathcal{L}^R$, the left-hand side of the metric invariance property vanishes, and the right-hand side of it becomes
\begin{align}
    RHS &= g([\chi(z), \chi(y)]_E, w) + g(\chi(y), [\chi(z), w]_E) \nonumber\\
    &= g([\chi(z), \chi(y)]_E, w) + \iota_{\rho_E([\chi(z), w]_E} y \nonumber\\
    &= g([\chi(z), \chi(y)]_E, w), \nonumber
\end{align}
where the last equality follow from exactness and the fact that the anchor $\rho_E$ is a morphism of brackets. As the left-hand side of the metric invariance property vanishes, the non-degeneracy of the $E$-metric $g$ implies that $[\chi(z), \chi(y)]_E = 0$ for all $z, y \in \mathfrak{X}(Z)$, so that this term does not contribute.

For the term $[\phi(U), \phi(V)]_E$, we consider the difference $\Delta(U, V) := [\phi(U), \phi(V)]_E - \phi([U, V])$. We have
\begin{align}
    \Delta(U, f V) &= [\phi(U), \phi(f V)]_E - \phi([U, f V]) \nonumber\\
    &= [\phi(U), f \phi(V)]_E - \phi(f [U, V] + U(f) V) \nonumber\\
    &= f [\phi(U), \phi(V)]_E + \rho_E \phi(U)(f) \phi(V) - f \phi([U, V]) + U(f) \phi(V) \nonumber\\
    &= f \Delta(U, V), \nonumber
\end{align}
by the right-Leibniz rule. Similarly, we can evaluate
\begin{align}
    \Delta(f U, V) &= [\phi(f U), \phi(V)]_E - \phi([f U, V]) \nonumber\\
    &= [f \phi(U), \phi(V)]_E - \phi(f [U, V] - V(f) U) \nonumber\\
    &= f [\phi(U), \phi(V)]_E - \rho_E \phi(V)(f) \phi(U) + \chi l_{\underline{d} f} g(\phi(U), \phi(V))- f \phi([U, V]) + V(f) \phi(U) \nonumber\\
    &= f \Delta(U, V) + \chi l_{\underline{d} f} g(\phi(U), \phi(V)), \nonumber\\
    &= f \Delta(U, V) + \chi l_{\underline{d} f} F(U, V), \nonumber
\end{align}
by the left-Leibniz rule. Note that $\Delta$ is a map of the form $\mathfrak{X}(M) \times \mathfrak{X}(M) \to \mathfrak{X}(E)$; its range is not in $\mathfrak{X}(Z)$. Yet, now we will show that the image of $\Delta$ is in the image of $\chi$. By exactness, it is equivalent to show that the image of $\Delta$ is in the kernel of the anchor $\rho_E$:
\begin{align} 
\rho_E(\Delta(U, V)) &= \rho_E([\phi(U), \phi(V)]_E - \phi([U, V])) \nonumber\\
&= [\rho_E \phi(U), \rho_E \phi(V)] - \rho_E \phi([U, V]) \nonumber\\
&= [U, V] - [U, V] = 0,\nonumber
\end{align}
so that we have $im(\Delta) \subset im(\chi)$. Consequently, we can find a map $H: \mathfrak{X}(M) \times \mathfrak{X}(M) \to \mathfrak{X}(Z)$ such that $\chi H = \Delta$ and it satisfies Equation (\ref{ei15}). Combining all of these results, we get the desired conclusion.
\end{proof}

\begin{Corollary} In the same setting as Theorem (\ref{ti2}), if there is a $g$-isotropic splitting $\phi$, then $F = 0$, and $H$ is a $Z$-valued 2-form.
\label{ci1}
\end{Corollary}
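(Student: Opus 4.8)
The plan is to read off both conclusions directly from the explicit descriptions of $F$ and $H$ obtained inside the proof of Theorem (\ref{ti2}), now exploiting the extra hypothesis that the splitting $\phi$ is $g$-isotropic. The first claim is essentially immediate. In that proof the tensor $F$ was identified as $F(U,V) = g(\phi(U),\phi(V))$; by the very definition of a $g$-isotropic splitting (Equation (\ref{ec5})) one has $g(\phi(U),\phi(V)) = 0$ for all $U,V \in \mathfrak{X}(M)$, so $F$ vanishes identically.

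Once $F=0$, tensoriality of $H$ is automatic: substituting $F=0$ into the two relations (\ref{ei15}) collapses them to $H(U,fV) = f H(U,V)$ and $H(fU,V) = f H(U,V)$, so $H$ is $C^{\infty}(M,\mathbb{R})$-bilinear, hence a $Z$-valued $(0,2)$-type $T(M)$-tensor. It therefore only remains to establish antisymmetry.

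For antisymmetry I would not use the relations for $H$ themselves but rather the symmetric-part axiom (\ref{eg6}). Writing $\chi H(U,V) = \Delta(U,V) = [\phi(U),\phi(V)]_E - \phi([U,V])$ as in the proof of Theorem (\ref{ti2}), the symmetric part evaluated on $\phi(U),\phi(V)$ gives $[\phi(U),\phi(V)]_E + [\phi(V),\phi(U)]_E = \chi\, d\, g(\phi(U),\phi(V)) = \chi\, d(0) = 0$, where the middle equality uses $\mathbb{D} = \chi d$ and the last uses the $g$-isotropy just established together with the $\mathbb{R}$-linearity of $d$. Since the tangent Lie bracket is antisymmetric, $\phi([U,V]) + \phi([V,U]) = 0$ as well, whence $\Delta(U,V) + \Delta(V,U) = 0$. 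Because $\chi$ is injective (it is the inclusion in the exact sequence (\ref{eh5})), this forces $H(U,V) = -H(V,U)$; combined with the tensoriality above, $H$ is a $Z$-valued $2$-form.

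The only point requiring attention — rather than a genuine obstacle — is recognizing that the antisymmetry of $H$ is governed by the symmetric part of the bracket (which vanishes on the image of the isotropic splitting precisely because $F=0$), and not by the Leibniz-type identities (\ref{ei15}); everything else then follows from injectivity of $\chi$ and the antisymmetry of the Lie bracket on $T(M)$.
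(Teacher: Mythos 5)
Your proposal is correct and follows essentially the same route as the paper: $F=0$ from isotropy of $\phi$, tensoriality of $H$ from Equation (\ref{ei15}), and antisymmetry from the identity $H(U,V)+H(V,U)=d\,g(\phi(U),\phi(V))=dF(U,V)=0$, which is exactly your symmetric-part computation combined with antisymmetry of the Lie bracket and injectivity of $\chi$. Your version merely makes explicit the steps the paper leaves implicit.
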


\begin{proof} If $\phi$ is $g$-isotropic, then $F = 0$. When $F$ = 0, $H$ is a $Z$-valued $(0, 2)$-type tensor by Equation (\ref{ei15}). Moreover, it is anti-symmetric because one has
\begin{equation}
    H(U, V) + H(V, U) = d g(\phi(U), \phi(V)) = d F(U, V) = 0, \nonumber
    \end{equation}
so that $H$ is a $Z$-valued 2-form.
\end{proof}


\section{Examples}
\label{s10}

\noindent In this section, we focus on some algebroids from the literature that are special cases of almost-Bourbaki or almost-metric-Bourbaki algebroids. We give the definitions of these algebroids in a suitable manner to our own notation. Hence, this section together with Section (\ref{s12}) also serve as a dictionary for various algebroid structures presented in a unified notation. We also present the induced Bourbaki pre-calculus for possible cases. When there is another Bourbaki pre-calculus, related but not the \textit{induced} one of Definition (\ref{di2}), we also point out these constructions, and the relations between Bourbaki algebroids and Bourbaki pre-calculi. 

As our hierarchical construction can be used to define new algebroids, satisfying a different combination of our founding axioms, in principle we can easily talk about algebroids such as pre-metric-Lie, almost-dull, pre-Jacobi algebroids, or vector bundle valued higher Courant algebroids. For example, a pre-metric-Lie algebroid $E$ would be a pre-Lie algebroid equipped with an $R$-valued $E$-metric that is invariant in the sense of (\ref{eg7}) for some arbitrary vector bundle $R$. When they are straightforward, we will sometimes omit these definitions for a clear presentation.

\subsection{Lie Algebroids}

Any almost-Lie algebroid $(E, \rho_E, [\cdot,\cdot]_E)$ can be considered as an almost-Bourbaki algebroid with
\begin{equation}
(E, \rho_E, [\cdot,\cdot]_E, R, g, \mathbb{D} = 0, \mathbb{L} = 0),
\label{ej1}
\end{equation}
where $R$ and $g$ are somewhat arbitrary as $\mathbb{D}$ is zero. Consequently, every pre-Lie and Lie algebroid is a pre-Bourbaki and Bourbaki algebroid of the same form, respectively. Moreover, $H$-twisted Lie algebroids \cite{23} are pre-Bourbaki algebroids that satisfy a Jacobi identity ``twisted'' by a $ker(\rho_E)$-valued $E$-3-form $H$. 

For an exact pre-metric-Lie algebroid $(E, \rho_E, [\cdot,\cdot]_E, R, g, Z, \chi, d = 0, l = 0, \mathcal{L}^R)$, the induced Bourbaki pre-calculus is given by
\begin{equation}
    (\iota, d = 0, l = 0, \mathcal{L}^R, \mathcal{L}^Z = \nabla),
\label{ej2}
\end{equation}
for a usual vector bundle connection $\nabla: \mathfrak{X}(M) \times \mathfrak{X}(Z) \to \mathfrak{X}(Z)$. Consequently, the standard bracket is of the form
\begin{equation}
    [U + z, V + y]_S = [U, V] + \nabla_U y - \nabla_V z.
\label{ej3}
\end{equation}
Note that this induced Bourbaki pre-calculus is not the usual Cartan calculus even though the latter is constructed from the Lie bracket.

\subsection{Dull Algebroids}

\begin{Definition} \cite{24} A triplet $(E, \rho_E, [\cdot,\cdot]_E)$ is called an almost-dull algebroid if it is a local almost-Leibniz algebroid whose locality operator is zero.
\label{dy1}
\end{Definition}
By Proposition (\ref{pf1}), $L = 0$ implies that the symmetric part $S$ of the bracket $[\cdot,\cdot]_E$ is $C^{\infty}(M, \mathbb{R})$-bilinear. Coincidentally, the decomposition $S = \mathbb{D} g$ implies that $D$ is also $C^{\infty}(M, \mathbb{R})$-linear, meaning that it is a zeroth-order differential operator. Hence, any almost-dull algebroid is an almost-Bourbaki algebroid of the form
\begin{equation} (E, \rho_E, [\cdot,\cdot]_E, R, g, \mathbb{D}, \mathbb{L} = 0),
\label{ej5}
\end{equation}
for some arbitrary vector bundle $R$ and $R$-valued $E$-metric $g$, where $\mathbb{D}$ is a zeroth-order differential operator. Moreover, any pre-dull and dull algebroid is a pre-Bourbaki and Bourbaki algebroid, respectively.

For an exact pre-dull algebroid $(E, \rho_E, R, g, Z, \chi, d, l = 0, \mathcal{L}^R)$, where $d$ is a zeroth-order differential operator, the induced Bourbaki pre-calculus is given by
\begin{equation} (\iota, d, l = 0, \mathcal{L}^R, \mathcal{L}^Z = \nabla),
\label{ej6}
\end{equation}
for a usual vector bundle connection $\nabla$ on $Z$. This is parallel to exact pre-metric-Lie algebroid case, where the latter is a special case of the former as $0$ is a zeroth-order differential operator. 

\subsection{Metric and Courant Algebroids}

Any almost-Courant algebroid $(E, \rho_E, [\cdot,\cdot]_E, g)$ is an almost-Bourbaki algebroid with
\begin{equation}
(E, \rho_E, [\cdot,\cdot]_E, R = \Lambda^0(T^*(M)), g, \mathbb{D} = g^{-1} D_E = g^{-1} \rho_E^* \underline{d}, \mathbb{L}_{\omega}(f) = g^{-1} \rho_E^*(\omega f)).
\label{ej7}
\end{equation}
As the $E$-metric for almost-Courant algebroids takes values in $C^{\infty}(M, \mathbb{R})$, one has $\mathbb{L}_{\omega}(f) = g^{-1} \rho_E^*(\omega f) = f g^{-1} \rho_E^*(\omega)$. On the other hand, any metric algebroid $(E, \rho_E, [\cdot,\cdot]_E, g)$ is an almost-metric-Bourbaki algebroid with

\begin{equation}
(E, \rho_E, [\cdot,\cdot]_E, R = \Lambda^0(T^*(M)), g, \mathbb{D} = g^{-1} D_E = g^{-1} \rho_E^* \underline{d}, \mathbb{L}_{\omega}(f) = g^{-1} \rho_E^*(\omega f), \mathcal{L}^R = \underline{\mathcal{L}}).
\label{ej8}
\end{equation}
Consequently, any pre-Courant algebroid both in the sense of \cite{14} and of Definition (\ref{dd4}) is a pre-metric-Bourbaki algebroid, and any Courant algebroid is a metric-Bourbaki algebroid. Additionally, $H$-twisted Courant algebroids \cite{25}, and $\mathcal{T}$-twisted Courant algebroids \cite{26} are also pre-metric-Bourbaki algebroids of the same form. Moreover, exact versions of almost-Courant algebroids can be considered as exact almost-Bourbaki algebroids of the form
\begin{equation}
(E, \rho_E, [\cdot,\cdot]_E, R = \Lambda^0(T^*(M)), g, Z = T^*(M), \chi = g^{-1} \rho_E^*, d = \underline{d}, l_{\omega}(f) = f \omega).
\label{ej9}
\end{equation}
For the pair $R = \Lambda^0(T^*(M)), Z = \Lambda^1(T^*(M))$, the induced Bourbaki pre-calculus is given by,
\begin{equation} (\iota = \underline{\iota}, d = \underline{d}, l_{\omega} f = f \omega, \mathcal{L}^R = \underline{\mathcal{L}}, \mathcal{L}^Z = \underline{\mathcal{L}}).
\label{ej10}
\end{equation}
Moreover, our main results reproduce \v{S}evera classification partially and Theorem 5.6 of \cite{27} about exact $\mathcal{T}$-twisted Courant algebroids where isotropic splittings exist in these cases.

As the Leibniz-Jacobi identity is only used for showing that $H$ is closed in \v{S}evera classification, one can immediately conclude the same equality (\ref{ed14}) or (\ref{ed15}) for exact pre-Courant algebroids with $H$ being a 3-form which is not necessarily closed. Moreover, in the proof, one first gets a $T^*(M)$-valued 2-form $H$, but by using the metric invariance property (\ref{ed10}), one concludes that $H$ is indeed a 3-form.

Even though we only consider real and finite rank vector bundles, some complex infinite dimensional vector bundles also satisfy properties similar to a Bourbaki algebroid, namely exotic Courant algebroids \cite{28}, which is not surprising since the purpose of exotic ones is to generalize Courant algebroids in some certain context.


\subsection{Higher Metric and Higher Courant Algebroids}

Similarly to the previous section, any higher almost-Courant $(E, \rho_E, [\cdot,\cdot]_E, g, \chi)$ is an almost-Bourbaki algebroid 
\begin{equation}
(E, \rho_E, [\cdot,\cdot]_E, R = \Lambda^{k-1}(T^*(M)), g, \mathbb{D} = \chi \underline{d}, \mathbb{L}_{\omega}(\eta) = \chi(\omega \wedge \eta)),
\label{ej11}
\end{equation}
for some $k$. On the other hand, any higher metric algebroid $(E, \rho_E, [\cdot,\cdot]_E, g, \chi)$ is an almost-metric-Bourbaki algebroid with
\begin{equation}
(E, \rho_E, [\cdot,\cdot]_E, R = \Lambda^{k-1}(T^*(M)), g, \mathbb{D} = \chi \underline{d}, \mathbb{L}_{\omega}(\eta) = \chi(\omega \wedge \eta), \mathcal{L}^R = \underline{\mathcal{L}}).
\label{ej12}
\end{equation}
Again consequently, every higher pre-Courant algebroid is a pre-metric-Bourbaki algebroid, and every higher Courant algebroid is a Bourbaki algebroid. Moreover, exact versions of higher almost- Courant algebroids can be considered as exact almost-Bourbaki algebroids of the form
\begin{equation}
(E, \rho_E, [\cdot,\cdot]_E, R = \Lambda^{k-1}(T^*(M)), g, Z = \Lambda^k(T^*(M)), \chi, d = \underline{d}, l_{\omega}(\eta) = \omega \wedge \eta).
\label{ej13}
\end{equation}
For the pair $R = \Lambda^{k-1}(T^*(M)), Z = \Lambda^k(T^*(M)$, the induced Bourbaki pre-calculus is given by,
\begin{equation} (\iota = \underline{\iota}, d = \underline{d}, l_{\omega} \eta = \omega \wedge \eta, \mathcal{L}^R = \underline{\mathcal{L}}, \mathcal{L}^Z = \underline{\mathcal{L}}).
\label{ej14}
\end{equation}
We can conclude that the usual Cartan calculus is the Bourbaki pre-calculus induced by exact pre-metric and exact higher pre-metric algebroids.

For the standard higher Courant algebroid $\mathbb{T}^k(M)$, in both higher Courant algebroid \cite{3} and Vinogradov Lie $k$-algebroid perspectives \cite{29}, it is noted that the bracket can be twisted by a $(k+2)$-form. Our results indicates that this twist can be done by $\Lambda^{k}(T^*(M))$-valued 2-forms, which include all $(k+2)$-forms.

If one wishes to define higher versions of $H$-twisted or $T$-twisted Courant algebroids, they would also fit into pre-metric-Bourbaki algebroids.

\subsection{Cartan Calculus on Vector Bundle Valued Forms}

There is a straightforward generalization of Cartan calculus on vector bundle valued forms, provided the vector bundle is endowed with a connection. For a vector bundle $P$, a $P$-valued $k$-form can be considered as a $C^{\infty}(M, \mathbb{R})$-multilinear anti-symmetric map from the $k$th tensor power of the tangent bundle to the vector bundle $P$, and the set of all $P$-valued $k$-forms is denoted by $\Omega^k(M; P) := \Gamma(Hom(\Lambda^k(T(M)), P))$. If $P$ is endowed with a usual vector bundle connection $\nabla: \mathfrak{X}(M) \times \mathfrak{X}(P) \to \mathfrak{X}(P)$, then we can define the exterior covariant derivative $\underline{d}^\nabla: \Omega^k(M; P) \to \Omega^{k+1}(M; P)$ such that
\begin{align}
(\underline{d}^{\nabla} \omega)(V_1, \ldots, V_{k+1}) &:= \sum_{1 \leq i \leq k+1} (-1)^{i+1} \nabla_{V_i}(\omega(V_1, \ldots, \check{V_i}, \ldots, V_{k+1})) \nonumber\\
& \quad \ + \sum_{1 \leq i < j \leq {k+1}} \omega([V_i, V_j], V_1, \ldots, \check{V_i}, \ldots \check{V_j}, \ldots, V_{k+1}),
\label{ej15}
\end{align}
for $V_i \in \mathfrak{X}(M)$, which is parallel to Equation (\ref{eb3}). Analogously, we have the interior product $\underline{\iota}^{\nabla}: \Omega^{k+1}(M; P) \to \Omega^k(M; P)$\footnote{It does not depend on $\nabla$, but we denote it with a $\nabla$ to differentiate it from other interior products. We will do this for other interior products, too.}. Similarly, we can define the Lie covariant derivative $\underline{\mathcal{L}}^{\nabla}: \mathfrak{X}(M) \times \Omega^k(M; P) \to \Omega^k(M; P)$ such that
\begin{equation} 
(\underline{\mathcal{L}}^{\nabla}_V \omega)(V_1, \ldots, V_k) := \nabla_V(\omega(V_1, \ldots, V_k)) - \sum_{i=1}^k \omega(V_1, \ldots, \underline{\mathcal{L}}_V V_i, \ldots V_k),
\label{ej16}
\end{equation}
which is parallel to Equation (\ref{eb1}). These maps constitute a Bourbaki pre-calculus, as they satisfy Cartan magic formula and $(\underline{\iota}^{\nabla})^2 = 0$, just as the usual versions.

A natural question is that from which pre-metric-Bourbaki algebroid this Bourbaki pre-calculus is induced. Even though such algebroid structures do not exist to our knowledge, we can make an educated guess in our framework. This leads us to define vector bundle valued higher Courant algebroids. Yet, we will only present the following definition; the others would be defined analogously.

\begin{Definition} A septet $(E, \rho_E, [\cdot,\cdot]_E, P, g, \chi, \nabla)$ is called a $P$-valued higher almost-Courant algebroid if
\begin{itemize}
    \item $(E, \rho_E, [\cdot,\cdot]_E)$ is an almost-Leibniz algebroid,
    \item $P$ is a vector bundle,
    \item $g$ is a $Hom(\Lambda^{k-1}(T(M)), P)$-valued $E$-metric,
    \item $\chi: Hom(\Lambda^k(T(M)), P) \to E$ is a vector bundle morphism,
    \item $\nabla$ is a vector bundle connection on $P$,
    \item The symmetric part of the bracket satisfies
    \begin{equation} [u, v]_E + [v, u]_E = \chi \underline{d}^{\nabla} g(u, v),
    \label{ej17}
    \end{equation}
    for all $u, v \in \mathfrak{X}(R)$.
\end{itemize}
\label{dj2}
\end{Definition}
\noindent Exact $P$-valued higher pre-Courant algebroids would yield the Cartan calculus on $P$-valued forms as the induced Bourbaki pre-calculus.

\subsection{Jacobi Algebroids}

Jacobi algebroids offers a good toy model for the discussion of the previous section as they are defined as Lie algebroids equipped with an $E$-connection. 
\begin{Definition} \cite{30, 31} A quintet $(E, \rho_E, [\cdot,\cdot]_E, R, \nabla)$ is called a Jacobi algebroid if
\begin{itemize}
    \item $(E, \rho_E, [\cdot,\cdot]_E)$ is a Lie algebroid,
    \item $R$ is a line bundle,
    \item $\nabla$ is a flat $E$-connection on $R$, i.e.
    \begin{equation}
        \nabla_u \nabla_v r - \nabla_v \nabla_u r - \nabla_{[u, v]_E} r = 0,
    \label{ej18}
    \end{equation}
    for all $u, v \in \mathfrak{X}(E), r \in \mathfrak{X}(R)$.
\end{itemize}
\label{dj3}
\end{Definition}
As any Jacobi algebroid is a Lie algebroid by definition, we can see it as a Bourbaki algebroid of the form (\ref{ej1}). Even though there is no metric or metric invariance property, a Jacobi algebroid is endowed with an $E$-connection, which can be considered as a $C^{\infty}(M, \mathbb{R})$-bilinear map $\nabla: E \to \mathfrak{D}(R)$. The flatness condition (\ref{ej18}) implies that this map is actually a morphism of brackets from $E$ to $\mathfrak{D}(R)$, which is itself a Lie algebroid with the usual commutator. 

In order to define the exterior covariant derivative or Lie covariant derivative on $R$-valued forms, we need a usual vector bundle connection on $R$, not an $E$-connection. A $T(M)$-connection $\tilde{\nabla}$ is said to be \textit{induced} by an $E$-connection $\nabla$ if $\tilde{\nabla} (\rho_E \otimes id_R) = \nabla$, i.e. $\tilde{\nabla}_{\rho_E(u)} r = \nabla_u r$, for all $u \in \mathfrak{X}(E), r \in \mathfrak{X}(R)$. Given an $E$-connection, if we can find a corresponding induced $T(M)$-connection, then we can construct the exterior covariant derivative and Lie covariant derivative. Yet, there is no reason to have a well-defined, unique induced $T(M)$-connection for every $E$-connection. However, for the conformal Courant algebroids of the next subsection, this is indeed the case, and one can make use of it.

\subsection{Conformal Courant Algebroids}

\begin{Definition} \cite{32} A conformal Courant algebroid is a sextet $(E, \rho_E, [\cdot,\cdot]_E, R, g, \nabla)$ such that
\begin{itemize}
    \item $(E, \rho_E, [\cdot,\cdot]_E)$ is a Leibniz algebroid,
    \item $R$ is a line bundle,
    \item $g$ is an $R$-valued $E$-metric,
    \item $\nabla$ is an $E$-connection on $R$,
    \item The symmetric part of the bracket satisfies\footnote{Here, we consider $g^{-1}$ as a map $\mathfrak{X}(R) \times \mathfrak{X}(E^*) \to \mathfrak{X}(E)$ and $\nabla$ as a map $\mathfrak{X}(R) \to \mathfrak{X}(E^*) \times \mathfrak{X}(R)$.}
    \begin{equation} [u, v]_E + [v, u]_E = g^{-1} \nabla g(u, v),
    \label{ej19}
    \end{equation}
    \item The following metric invariance property holds
    \begin{equation} \nabla_u (g(v, w)) = g([u, v]_E, w) + g(v, [u, w]_E),
    \label{ej20}
    \end{equation}
\end{itemize}
for all $u, v, w \in \mathfrak{X}(E)$.
\label{dj4}
\end{Definition}
By Lemma 2.1 of \cite{32}, $\nabla$ is flat in the same sense as Equation (\ref{ej18}), so that conformal Courant algebroids are similar to Jacobi algebroids in this sense, but they have a bracket which has a symmetric part.

By Lemma 2.4 of \cite{32}, for the connection $\nabla$, there is a unique and well-defined induced $T(M)$-connection $\tilde{\nabla}$. By using this uniquely defined $\tilde{\nabla}$, we can replace $\nabla_u$ with $\tilde{\nabla}_{\rho_E(u)}$ in the left-hand side of Equation (\ref{ej20}). Hence, any conformal Courant algebroid is a metric-Bourbaki algebroid of the form
\begin{equation} (E, \rho_E, [\cdot,\cdot]_E, R, g, \mathbb{D} = g^{-1} \nabla, \mathbb{L}_{\omega}(r) = g^{-1}(\rho_E^*(\omega) \otimes r), \mathcal{L}^R = \tilde{\nabla}).
\label{ej21}
\end{equation}
Moreover, by using the induced $T(M)$-connection $\tilde{\nabla}$, we can construct exterior covariant derivative $\underline{d}^{\tilde{\nabla}}$, Lie covariant derivative $\underline{\mathcal{L}}^{\nabla}$ and interior product $\underline{\iota}^{\nabla}$ acting on $R$-valued $k$-forms, which would yield a Bourbaki pre-calculus. 

A conformal Courant algebroid $(E, \rho_E, [\cdot,\cdot]_E, R, g, \nabla)$ is said to be exact if the following is an exact sequence of vector bundles \cite{32}: 
\begin{equation}
    0 \xrightarrow{\quad} T^*(M) \otimes R \xrightarrow{\quad g^{-1} (\rho_E^* \otimes id_R) \quad} E \xrightarrow{\ \quad \rho_E \ \quad} T(M) \xrightarrow{\quad} 0.
\label{ej22}
\end{equation}
Any such exact conformal Courant can be seen as an exact metric-Bourbaki algebroid of the form
\begin{equation} (E, \rho_E, [\cdot,\cdot]_E, R, g, Z = T^*(M) \otimes R, \chi = g^{-1} (\rho_E^* \otimes id_R), d = \tilde{\nabla}, l_{\omega} r = \omega \otimes r, \mathcal{L}^R = \tilde{\nabla}),
\label{ej23}
\end{equation}
as the action on $\underline{d}^{\tilde{\nabla}}$ and $\tilde{\nabla}$ agrees on $R$-valued $0$-forms, i.e. on sections of $R$. The induced Bourbaki pre-calculus coincides with the Cartan calculus on $R$-valued $k$-forms constructed from $\tilde{\nabla}$, and our main results reproduce partially Propositions 2.6, 2.7 and 2.8 in \cite{32}. Every exact conformal Courant algebroid admits a $g$-isotropic splitting by Lemma 2.5 of \cite{32}, so that the twist is done by a $(T^*(M) \otimes L)$-valued 2-form, which becomes an $L$-valued 3-form. 



\section{$A$-Bourbaki Algebroids and Bourbaki $A$-Pre-Calculus}
\label{s11}

\noindent In previous sections, we constructed Bourbaki and metric-Bourbaki algebroids together with Bourbaki pre-calculi. In Section (\ref{s10}), we saw that these algebroids are general enough so that many algebroids from the literature fit into this framework. Yet, there is one more straightforward generalization by replacing the tangent Lie algebroid with an arbitrary pre-Lie algebroid\footnote{One can also try to replace the tangent Lie algebroid with an exact pre-Bourbaki algebroid, yet this construction will be out of the scope of this paper.}. With this modification, in the next section, we will see that we can cover a lot more examples from the algebroid literature. 

This natural generalization follows from the fact that the choice of the exterior derivative $\underline{d}$ in the symbol map, and consequently in $\mathbb{L}_{\underline{d} f}$, is somewhat arbitrary. Similar considerations to Equation (\ref{ef3}) also apply here; for consistency, the map that acts on the smooth functions should be a derivation over $C^{\infty}(M, \mathbb{R})$ like the exterior derivative. Hence, instead, one can define an ``$A$-symbol map'' $\sigma^A$ for an anchored vector bundle $(A, \rho_A)$:
\begin{align} 
\sigma^A: Dif^1(R, E) &\to Hom(A^*, Hom(R, E)) \nonumber\\
D &\mapsto \left( D_A f \mapsto \left( r \mapsto D(f r) - f D(r) \right) \right),
\label{ek1}
\end{align}
where $D_A := \rho_A^* \underline{d}: C^{\infty}(M, \mathbb{R}) \to \mathfrak{X}(A^*)$. With the help of this $A$-symbol map and with the insight from $AV$-Courant algebroids \cite{6}, we can define $A$-Bourbaki algebroids simply as Bourbaki algebroids whose $D$ is equipped with an $A$-symbol $\mathbb{L}^A$ instead of the usual $T(M)$-symbol $\mathbb{L}$. That is, we have $\mathbb{D}: \mathfrak{X}(R) \to \mathfrak{X}(Z)$ and $C^{\infty}(M, \mathbb{R})$-multilinear $\mathbb{L}^A: \Omega^1(A) \times \mathfrak{X}(R) \to \mathfrak{X}(Z)$ satisfying $\mathbb{D}(f r) = f \mathbb{D} r + \mathbb{L}^A_{D_A f} r$ for all $f \in C^{\infty}(M, \mathbb{R}), r \in \mathfrak{X}(R)$. With this observation, we can replace the tangent bundle $T(M)$ with the anchored vector bundle $A$ in our constructions of Sections (\ref{s6} - \ref{s9}), and everything still works with possible some extra conditions. Hence, we present the $A$-versions of our previous constructions and results in a compact manner.
\begin{Definition} A nonet $(E, \rho_E, [\cdot,\cdot]_E, R, g, A, \rho_A, \mathbb{D}, \mathbb{L}^A)$ is called an $A$-almost-Bourbaki algebroid if
\begin{itemize}
    \item $(A, \rho_A)$ is an anchored vector bundle,
    \item $(E, \rho_E, [\cdot,\cdot]_E, R, g, \mathbb{D}, \mathbb{L}^A)$ is an almost-Bourbaki algebroid, but $\mathbb{D}$ is a first-order differential operator from $R$ to $Z$ equipped with the $A$-symbol $\mathbb{L}^A$.
\end{itemize}
An $A$-almost-Bourbaki algebroid is called $A$-pre-Bouraki (resp. $A$-Bourbaki) algebroid if it is also a pre-Leibniz (resp. Leibniz) algebroid.
\label{dk1}
\end{Definition}

\begin{Definition} A dectet $(E, \rho_E, [\cdot,\cdot]_E, R, g, A, \rho_A, \mathbb{D}, \mathbb{L}^A, \mathcal{L}^R)$ is called an $A$-almost-metric-Bourbaki\footnote{Behold, triple hyphens!} algebroid if
\begin{itemize}
    \item $(A, \rho_A)$ is an anchored vector bundle,
    \item $(E, \rho_E, [\cdot,\cdot]_E, R, g, \mathbb{D}, \mathbb{L}^A, \mathcal{L}^R)$ is an almost-metric-Bourbaki algebroid, but $\mathbb{D}$ is a first-order differential operator from $R$ to $Z$ equipped with the $A$-symbol $\mathbb{L}^A$.
\end{itemize}
An $A$-almost-metric-Bourbaki algebroid is called $A$-pre-metric-Bouraki (resp. $A$-metric-Bourbaki) algebroid if it is also a pre-Leibniz (resp. Leibniz) algebroid.
\label{dk2}
\end{Definition}

\begin{Definition} A duodectet $(E, \rho_E, [\cdot,\cdot]_E, R, g, Z, \chi, A, \rho_A, d, l^A, Q)$ is called an exact $A$-almost-Bourbaki algebroid if
\begin{itemize}
    \item $Z$ is a vector bundle,
    \item $\chi: Z \to E$ is a vector bundle morphism,
    \item $(E, \rho_E, [\cdot,\cdot]_E, R, g, A, \rho_A, \mathbb{D} = \chi d, \mathbb{L}^A = \chi l^A)$ is an $A$-almost-Bourbaki algebroid,
    \item $Q: E \to A$ is a vector bundle morphism,
    \item The following is an exact sequence of vector bundles
    \begin{equation}
        0 \xrightarrow{\quad} Z \xrightarrow{\quad \chi \quad} E \xrightarrow{\ \quad Q \ \quad} A \xrightarrow{\quad} 0.
    \label{ek2}
    \end{equation}
\end{itemize}
A tredectet $(E, \rho_E, [\cdot,\cdot]_E, R, g, Z, \chi, A, \rho_A, d, l^A, \mathcal{L}^R, Q)$ is called an exact $A$-almost-metric-Bourbaki algebroid if $Z, \chi$ and $Q$ are exactly as above, and $(E, \rho_E, [\cdot,\cdot]_E, R, g, A, \rho_A, \mathbb{D} = \chi d, \mathbb{L}^A = \chi l^A, \mathcal{L}^R)$ is an $A$-almost-metric algebroid.
\label{dk3}
\end{Definition}
Furthermore, if $A$ is also endowed with an almost-Lie algebroid structure $(A, \rho_A, [\cdot,\cdot]_A)$, then we can define a generalization of Bourbaki pre-calculus.
\begin{Definition} Let $(R, Z)$ be a pair of vector bundles, and $(A, \rho_A, [\cdot,\cdot]_A)$ an almost-Lie algebroid. The quintet $(\iota, d, l^A, \mathcal{L}^R, \mathcal{L}^Z)$ is called a Bourbaki $A$-pre-calculus if 
\begin{itemize}
\item $\iota: \mathfrak{X}(A) \times \mathfrak{X}(Z) \to \mathfrak{X}(R)$ is a $C^{\infty}(M, \mathbb{R})$-bilinear map,
\item $d: \mathfrak{X}(R) \to \mathfrak{X}(Z)$ is a first-order differential operator with the $A$-symbol $l^A$,
\item $\mathcal{L}^R: \mathfrak{X}(A) \times \mathfrak{X}(R) \to \mathfrak{X}(R)$ is an $\mathbb{R}$-bilinear map,
\item $\mathcal{L}^Z: \mathfrak{X}(A) \times \mathfrak{X}(Z) \to \mathfrak{X}(Z)$ is a first-order differential operator satisfying
    \begin{enumerate}
        \item $\mathcal{L}^Z_a(f z) = f \mathcal{L}^Z_a z + \rho_A(a)(f) z$,
        \item $\mathcal{L}^Z_{f a} z = f \mathcal{L}^Z_a z + l^A_{D_A f} \iota_a z$,
    \end{enumerate}
\item They satisfy
\begin{align}
    &\mathcal{L}^R_a \iota_b z = \iota_{[a, b]_A} z + \iota_b \mathcal{L}^Z_a z, \nonumber\\
    &\iota_a \left( \mathcal{L}^Z_b z - d \iota_b z \right) = - \iota_b \left( \mathcal{L}^Z_a z - d \iota_a z \right)
\end{align}
\end{itemize}
for all $f \in C^{\infty}(M, \mathbb{R}), a, b \in \mathfrak{X}(A), z \in \mathfrak{X}(Z)$.
\label{dk4}
\end{Definition}
\noindent Any exact $A$-pre-metric-Bourbaki algebroid induces an Bourbaki $A$-pre-calculus completely analogously to the usual version, since Lemma (\ref{li1}) and Proposition (\ref{pi1}) still hold. 
\begin{Definition} Let $(E, \rho_E, [\cdot,\cdot]_E, R, g, Z, \chi, A, \rho_A, d, l^A, \mathcal{L}^R)$ be an exact $A$-pre-metric-Bourbaki algebroid such that $\chi$ is $g$-isotropic. The Bourbaki pre-calculus $(\iota, d, l^A, \mathcal{L}^R, \mathcal{L}^Z)$, where $\iota$ and $\mathcal{L}^Z$ are defined analogously to Equation (\ref{ei7}) and Equation (\ref{ei6}), is called the ``induced'' Bourbaki $A$-pre-calculus by $E$.
\label{dk5}
\end{Definition}
If one considers the tangent bundle $T(M)$ as an anchored vector bundle with the identity map, then $T(M)$-almost-Bourbaki and $T(M)$-almost-metric-Bourbaki algebroids become almost-Bourbaki and almost-metric-Bourbaki algebroids, respectively. Moreover, in addition if we take the map $Q$ as the anchor of $E$, then exact $T(M)$-almost-Bourbaki and exact $T(M)$-almost-metric-Bourbaki algebroids and Bourbaki $T(M)$-pre-calculus yield the Bourbaki versions. Furthermore, under analogous assumptions, with possibly a small extra condition on $Q$, our main results still hold. 
\begin{Theorem} Let $(\iota, d, l^A, \mathcal{L}^R, \mathcal{L}^Z)$ be a Bourbaki $A$-pre-calculus on the pair of vector bundles $(R, Z)$ for a pre-Lie algebroid $(A, \rho_A, [\cdot,\cdot]_A)$, then $A \oplus Z$ becomes an exact $A$-pre-metric-Bourbaki algebroid
\begin{equation}
    (E = A \oplus Z, \rho_E = \rho_A Q = \rho_A proj_1, [\cdot,\cdot]_S, R, g_S, Z, \chi = inc_2, A, \rho_A, d, l^A, \mathcal{L}^R, Q = proj_1),
\label{ek3}
\end{equation}
where the ``standard bracket'' $[\cdot,\cdot]_S$ is given by
\begin{equation} 
    [a + z, b + y]_S := [a, b]_A + \mathcal{L}^Z_a y - \mathcal{L}^Z_b z + d \iota_b z,
\label{ek4}
\end{equation}
and the standard $R$-valued $E$-metric $g_S$ is defined as 
\begin{equation}
    g_S(a + z, b + y) := \iota_a y + \iota_b z.
\label{ek5}
\end{equation}
Here, $proj_1: E = A \oplus Z \to T(M)$ is the projection onto the first component, and $inc_2: Z \to E = A \oplus Z$ is the canonical inclusion. 
\label{tk1}
\end{Theorem}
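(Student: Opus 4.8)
The plan is to mirror the proof of Theorem \ref{ti1} line by line, replacing the tangent Lie algebroid $T(M)$ and the Lie bracket of vector fields by the pre-Lie algebroid $(A, \rho_A, [\cdot,\cdot]_A)$ and its bracket $[\cdot,\cdot]_A$, and then to isolate the one genuinely new point where the \emph{pre}-Lie hypothesis is used. Following Definition \ref{dk3} together with the pre-Leibniz condition, I would verify in turn: (i) that the standard data $(g_S,\ \chi = inc_2,\ Q = proj_1,\ \rho_E = \rho_A\, proj_1)$ satisfy the structural requirements, so that the tuple is an $A$-almost-metric-Bourbaki algebroid; (ii) that the symmetric part of $[\cdot,\cdot]_S$ equals $\chi\, d\, g_S$; (iii) that the metric invariance property holds; (iv) that $\rho_E$ is a morphism of brackets; and (v) that the defining sequence is exact. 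Steps (i), (ii) and (v) are routine and formally identical to the $T(M)$ case, so the substance lies in (iii), which reproduces the Theorem \ref{ti1} computation, and in (iv), which is the only genuinely new ingredient.

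For the symmetric part I would compute directly from (\ref{ek4}) that
\[
[a+z, b+y]_S + [b+y, a+z]_S = \big([a,b]_A + [b,a]_A\big) + d\,\iota_b z + d\,\iota_a y .
\]
Because $A$ is pre-Lie its bracket is anti-symmetric, so the first summand vanishes and the right-hand side reduces to $d\, g_S(a+z, b+y) = \chi\, d\, g_S(a+z, b+y)$ (recall $\chi = inc_2$ and $\mathbb{D} = \chi d$), which is exactly (\ref{eg6}). For the metric invariance I would expand the left-hand side $\mathcal{L}^R_a\, g_S(b+y, c+x) = \mathcal{L}^R_a(\iota_b x + \iota_c y)$, where $a = Q(a+z)$, by the first compatibility relation of the Bourbaki $A$-pre-calculus (Definition \ref{dk4}, the analogue of (\ref{BC1})), and separately expand $g_S([a+z,b+y]_S, c+x) + g_S(b+y,[a+z,c+x]_S)$ from the definitions of $[\cdot,\cdot]_S$ and $g_S$. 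Subtracting the two, every term matches except the residual $\iota_c(\mathcal{L}^Z_b z - d\,\iota_b z) + \iota_b(\mathcal{L}^Z_c z - d\,\iota_c z)$, which is zero by the second compatibility relation (the analogue of (\ref{BC2})). This is verbatim the computation in the proof of Theorem \ref{ti1}, with $U,V,W$ replaced by $a,b,c$ and $[U,V]$ by $[a,b]_A$.

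The new step is (iv). Here I would use $\rho_E = \rho_A\, proj_1$ together with the fact that, by construction, $proj_1\,[a+z,b+y]_S = [a,b]_A$ (the $Z$-valued part is annihilated by $Q = proj_1$), to obtain
\[
\rho_E([a+z,b+y]_S) = \rho_A([a,b]_A) = [\rho_A(a),\rho_A(b)] = [\rho_E(a+z),\rho_E(b+y)] ,
\]
the middle equality being precisely the statement that $\rho_A$ is a morphism of brackets, i.e. that $A$ is pre-, not merely almost-, Lie. This is exactly where the hypothesis enters, and it is the ``small extra condition on $Q$'' mentioned before the statement: since $Q = proj_1$ automatically intertwines $[\cdot,\cdot]_S$ with $[\cdot,\cdot]_A$, the anchor-morphism property for $\rho_E$ collapses to that for $\rho_A$. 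Exactness of $0 \to Z \xrightarrow{\ inc_2\ } A\oplus Z \xrightarrow{\ proj_1\ } A \to 0$ is then immediate.

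I do not anticipate a serious obstacle, since the argument is structurally identical to Theorem \ref{ti1}; the only care required is the bookkeeping in the metric-invariance cancellation and the single appeal to anti-symmetry and the morphism property of $\rho_A$. The one point worth double-checking is that the standard data really do define an $A$-almost-metric-Bourbaki algebroid in the sense of Definition \ref{dk3} — in particular that $\mathbb{D} = inc_2\, d$ is a first-order differential operator carrying the $A$-symbol $inc_2\, l^A$, that $g_S$ is a genuine (non-degenerate) $R$-valued $E$-metric, and that each $\mathcal{L}^R_a$ is the expected derivation on $R$ — but all of these are inherited from the corresponding properties of $d$, $l^A$, $\iota$ and $\mathcal{L}^Z$ posited in Definition \ref{dk4}, exactly as in the $T(M)$ setting.
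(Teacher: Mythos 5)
Your proposal is correct and follows exactly the route the paper intends: the paper's own proof of this theorem simply states that it is ``completely parallel to the proof of Theorem (\ref{ti1})'', and your computation of the symmetric part, the metric-invariance cancellation via the two compatibility relations, and the exactness of $0 \to Z \to A \oplus Z \to A \to 0$ reproduce that argument with $T(M)$ replaced by $A$. Your explicit verification that $\rho_E = \rho_A\,proj_1$ is a morphism of brackets (via $proj_1[a+z,b+y]_S = [a,b]_A$ and the pre-Lie hypothesis on $\rho_A$) is a detail the paper leaves implicit, and you have correctly identified it as the one place the pre-Lie assumption enters.
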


\begin{proof} The proof is completely parallel to the proof of Theorem (\ref{ti1}). 
\end{proof}
\noindent Note that as $Q = proj_1$ and the range of both $\mathcal{L}$ and $d$ is $Z$, the map $Q$ is a morphism of brackets, i.e. we have
\begin{equation} Q([u, v]_E) = [Q(u), Q(v)]_A,
\label{ek6}
\end{equation}
for all $u, v \in \mathfrak{X}(E)$.
\begin{Theorem}
Let $(\iota, d, l^A, \mathcal{L}^R, \mathcal{L}^Z)$ be a Bourbaki pre-calculus for a pre-Lie algebroid $(A, \rho_A, [\cdot,\cdot]_A)$, and $(E, \rho_E, [\cdot,\cdot]_E, R, g, Z, \chi, A, Q, d, l^A, \mathcal{L}^R)$ an exact $A$-pre-metric-Bourbaki algebroid such that $\chi$ is $g$-isotropic and $Q$ is a morphism of brackets. Then, for a splitting $\phi$ of the exact sequence (\ref{ek2}), the $E$-metric $g$ is of the form
\begin{equation}
    g \left( (\phi \oplus \chi)(a + z), (\phi \oplus \chi)(b + y) \right) = g_S(a + z, b + y) + F(a, b),
\label{ek7}
\end{equation}
where $F$ is an $R$-valued symmetric $(0, 2)$-type $A$-tensor, and the bracket $[\cdot,\cdot]_E$ is of the form
\begin{equation}
    [(\phi \oplus \chi)(a + z), (\phi \oplus \chi)(b + y)]_E = (\phi \oplus \chi)([a + z, b + y]_S) + \chi H(a, b),
\label{ek8}
\end{equation}
where $H: \mathfrak{X}(A) \times \mathfrak{X}(A) \to \mathfrak{X}(Z)$ is a map satisfying
\begin{align}
    H(a, f b) &= f H(a, b), \nonumber\\
    H(f a, b) &= f H(a, b) + l^A_{D_A f} F(a, b),
\label{ek9}
\end{align}
for all $a, b \in \mathfrak{X}(M), f \in C^{\infty}(M, \mathbb{R})$.
\label{tk2}
\end{Theorem}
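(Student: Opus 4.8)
The plan is to follow the proof of Theorem (\ref{ti2}) line by line, with the tangent bundle $T(M)$ replaced throughout by $A$, exactness taken with respect to $Q$ rather than $\rho_E$, and the role of ``$\rho_E$ is a morphism of brackets'' played instead by the separately assumed morphism property of $Q$ (\ref{ek6}). First I would fix a splitting $\phi$ of (\ref{ek2}), so that $Q\phi = id_A$ and $\mathrm{im}(\chi)=\ker(Q)$, giving the isomorphism $\phi\oplus\chi: A\oplus Z\to E$. Using the $g$-isotropy of $\chi$ and the decomposition $u=\phi(Q(u))+\chi(z_u)$, I would record the identity $g(\chi(z),u)=\iota_{Q(u)}z$, which is the $A$-analogue of the identity used in Theorem (\ref{ti2}). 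Setting $F(a,b):=g(\phi(a),\phi(b))$ then immediately gives (\ref{ek7}); it is $C^{\infty}(M,\mathbb{R})$-bilinear and $R$-valued, hence a symmetric $R$-valued $(0,2)$-type $A$-tensor by symmetry of $g$.

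Next I would decompose the bracket on the four combinations $[\phi(a),\phi(b)]_E$, $[\phi(a),\chi(y)]_E$, $[\chi(z),\phi(b)]_E$, $[\chi(z),\chi(y)]_E$, exactly as in Theorem (\ref{ti2}). For $[\phi(a),\chi(y)]_E$ I would apply the metric invariance property (\ref{eg7}) with $u=\phi(a)$, $v=\chi(y)$ and arbitrary $w$; the key point is that the invariance is indexed by $Q(u)$ (so $\mathcal{L}^R$ lives over $\mathfrak{X}(A)$), and since $Q\phi=id_A$ this index is simply $a$. Expanding both sides with $g(\chi(z),u)=\iota_{Q(u)}z$, the identity (\ref{BC1}) and the morphism property (\ref{ek6}) of $Q$ gives $[\phi(a),\chi(y)]_E=\chi(\mathcal{L}^Z_a y)$ after cancelling the common term $\iota_{[a,Q(w)]_A}y$ and invoking non-degeneracy. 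The symmetric part (\ref{eg6}) then yields $[\chi(z),\phi(b)]_E=\chi(-\mathcal{L}^Z_b z+d\iota_b z)$, and the invariance with $u=\chi(z)$, $v=\chi(y)$ (whose left-hand side vanishes by exactness and $\mathbb{R}$-linearity of $\mathcal{L}^R$) gives $[\chi(z),\chi(y)]_E=0$; in each case the anchor manipulations use only $Q\chi=0$, $Q\phi=id_A$ and (\ref{ek6}).

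The remaining term is handled via $\Delta(a,b):=[\phi(a),\phi(b)]_E-\phi([a,b]_A)$. Using the right-Leibniz rule one checks $\Delta(a,fb)=f\Delta(a,b)$, and using the left-Leibniz rule of the $A$-almost-Bourbaki algebroid, whose inhomogeneous term is now $\chi\,l^A_{D_A f}\,g(\cdot,\cdot)$, together with $\rho_E\phi=\rho_A Q\phi=\rho_A$, one checks $\Delta(fa,b)=f\Delta(a,b)+\chi\,l^A_{D_A f}F(a,b)$. Then I would show $\mathrm{im}(\Delta)\subset\mathrm{im}(\chi)$ by computing $Q(\Delta(a,b))=[Q\phi(a),Q\phi(b)]_A-Q\phi([a,b]_A)=[a,b]_A-[a,b]_A=0$, which is exactly where the assumption that $Q$ is a morphism of brackets is used, and applying $\ker(Q)=\mathrm{im}(\chi)$. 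Defining $H$ by $\chi H=\Delta$ (well-defined since $\chi$ is injective) yields (\ref{ek8}) and the transformation rules (\ref{ek9}) directly from the two computations above.

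The main obstacle, and the one place where genuine care beyond a mechanical translation is needed, is the bookkeeping caused by $\rho_E=\rho_A Q$ no longer being a section-wise right inverse of $\phi$: unlike the case of Theorem (\ref{ti2}) where $\rho_E\phi=id$, here $\rho_E\phi=\rho_A$, so the ``base'' vector field appearing in the Leibniz rules is $\rho_A(a)$ rather than $a$ itself. One must therefore consistently factor every anchor computation through $Q$ (using (\ref{ek6})) in order to land in $\mathfrak{X}(A)$, and only pass to $\mathfrak{X}(M)$ via $\rho_A$ precisely where the symbol $l^A_{D_A f}$ with $D_A=\rho_A^*\underline{d}$ is produced. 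Once this routing is pinned down, the cancellations are identical to the untwisted case, so no new structural difficulty arises.
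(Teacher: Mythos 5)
Your proposal is correct and is essentially the paper's own argument: the paper proves Theorem (\ref{tk2}) by declaring it ``completely parallel'' to Theorem (\ref{ti2}) with the morphism property of $Q$ substituted for that of $\rho_E$, which is exactly the translation you carry out. Your explicit attention to routing anchor computations through $Q$ (since $\rho_E\phi=\rho_A$ rather than the identity) and to the identity $g(\chi(z),u)=\iota_{Q(u)}z$ supplies the bookkeeping the paper leaves implicit, but introduces no new ideas beyond the paper's intended proof.
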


\begin{proof} The proof is completely parallel to the proof of Theorem (\ref{ti2}). The only different point is that we used the fact that $Q$ is a morphism of brackets.
\end{proof}

\begin{Corollary} In the same setting as Theorem (\ref{tk2}), if there is a $g$-isotropic splitting $\phi$, then $F = 0$, and $H$ is a $Z$-valued $A$-2-form.
\label{ck1}
\end{Corollary}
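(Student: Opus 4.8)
The plan is to run the argument of Corollary (\ref{ci1}) with every $T(M)$-level statement reinterpreted over the pre-Lie algebroid $A$; the three things to establish are $F=0$, tensoriality of $H$, and anti-symmetry of $H$. First I would invoke the explicit description of $F$ supplied by the proof of Theorem (\ref{tk2}), where $F(a,b)$ is identified with $g(\phi(a),\phi(b))$. The hypothesis that $\phi$ is $g$-isotropic means precisely $g(\phi(a),\phi(b))=0$ for all $a,b\in\mathfrak{X}(A)$, so $F=0$ is immediate.

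Next, with $F=0$ the second line of (\ref{ek9}) collapses to $H(fa,b)=fH(a,b)$, so $H$ is $C^{\infty}(M,\mathbb{R})$-linear in its first argument; since the first line of (\ref{ek9}) already makes it $C^{\infty}(M,\mathbb{R})$-linear in the second argument, $H$ is a $Z$-valued $(0,2)$-type $A$-tensor.

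For anti-symmetry I would compute the symmetric part of the bracket on the image of $\phi$. Setting $\Delta(a,b):=[\phi(a),\phi(b)]_E-\phi([a,b]_A)=\chi H(a,b)$ as in the proof of Theorem (\ref{tk2}), and using that $A$ is pre-Lie so that $[\cdot,\cdot]_A$ is anti-symmetric (whence $\phi([a,b]_A)+\phi([b,a]_A)=0$), the sum $\Delta(a,b)+\Delta(b,a)$ equals the full symmetric part $[\phi(a),\phi(b)]_E+[\phi(b),\phi(a)]_E$. The symmetric-part axiom (\ref{eg6}) with $\mathbb{D}=\chi d$ rewrites this as $\chi d\,g(\phi(a),\phi(b))=\chi d\,F(a,b)$, so injectivity of $\chi$ (from exactness of (\ref{ek2})) gives $H(a,b)+H(b,a)=d\,F(a,b)=0$. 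Hence $H$ is a $Z$-valued $A$-2-form.

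The computation is routine; the only two points that genuinely use the $A$-setting rather than being automatic are the appeal to anti-symmetry of $[\cdot,\cdot]_A$, which is exactly why the hypothesis \emph{pre-Lie} (and not merely pre-Leibniz) on $A$ is required, and the injectivity of $\chi$, which comes from the exact sequence (\ref{ek2}) of Definition (\ref{dk3}). I therefore do not anticipate any real obstacle beyond this bookkeeping.
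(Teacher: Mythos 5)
Your argument is correct and is essentially the paper's own proof (which simply transports Corollary (\ref{ci1}) to the $A$-setting): $F=0$ from $g$-isotropy of $\phi$, tensoriality of $H$ from (\ref{ek9}), and anti-symmetry from $\chi\bigl(H(a,b)+H(b,a)\bigr)=\chi\, d\, F(a,b)=0$. You merely make explicit two points the paper leaves implicit, namely the injectivity of $\chi$ from the exact sequence and the anti-symmetry of $[\cdot,\cdot]_A$, and both are used correctly.
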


\begin{proof} The proof is completely parallel to the proof of Corollary (\ref{ci1}).
\end{proof}


\section{More Examples}
\label{s12}

\noindent In this section, we present several examples of $A$-Bourbaki algebroids, $A$-metric-Bourbaki algebroids and Bourbaki $A$-pre-calculi from the literature.

\subsection{Cartan Calculus on Lie Algebroids}

For any almost-Lie algebroid $(A, \rho_A, [\cdot,\cdot]_A)$, one can construct an exterior derivative, Lie derivative and interior product on $A$-forms completely analogously to the usual Cartan versions \cite{33}. These operators yield an Bourbaki $A$-pre-calculus. Yet they are not the induced ones from an $A$-pre-metric-Lie algebroid, where in this case the induced Bourbaki $A$-pre-calculus would be $(\iota, d = 0, l^A = 0, \mathcal{L}^R, \mathcal{L}^Z = \nabla)$ for an $A$-connection on $Z$. On the other hand, if one considers an exact $A$-pre-metric-Bourbaki algebroid with the pair $R = \Lambda^{k-1}(A^*), Z = \Lambda^k(A^*)$ and construct a standard bracket on $A \oplus \Lambda^k(A^*)$ from these analogous maps, then the induced Bourbaki $A$-pre-calculus coincide with the Cartan calculus on $A$, analogously to the higher exact pre-metric algebroids. 

In particular, when the base manifold $M$ carries a Poisson structure, then the cotangent bundle $T^*(M)$ carries a Lie algebroid structure induced by the Poisson bivector. This Lie algebroid yields a $T^*(M)$-Bourbaki pre-calculus on multi-vector fields \cite{34}. 

For an almost-dull algebroid, the induced Bourbaki $A$-pre-calculus similarly would be $(\iota, d, l^A = 0, \mathcal{L}^R, \mathcal{L}^Z = \nabla)$ for an $A$-connection on $Z$, where $d$ is a zeroth-order differential operator.

\subsection{Cartan Calculus on Vector Bundle Valued $A$-forms}
\label{ss121}

\noindent Cartan calculus on almost-Lie algebroids can be generalized to vector bundle valued $A$-forms, completely parallel to the usual case. Let $P$ be a vector bundle, $(A, \rho_A, [\cdot,\cdot]_A)$ an almost-Lie algebroid, $\nabla$ an $A$-connection on $P$\footnote{The $A$-connection $\nabla$ on $P$ can be considered as a vector bundle morphism $\nabla: A \to \mathfrak{D}(P)$, so that sometimes it is said that $P$ is an $A$-module \cite{21} or $P$ carries a representation of $A$.}, then analogously to Equation (\ref{ej15}) and (\ref{ej16}), one can define an exterior covariant derivative $d^{\nabla}$, Lie covariant derivative $\mathcal{L}^{\nabla}$ and interior product $\iota^{\nabla}$ on $P$-valued $A$-forms. These maps yield a Bourbaki $A$-pre-calculus by Proposition 7.1.3 of \cite{21}. If we modify Definition (\ref{dj2}) of $P$-valued higher almost-Courant algebroids with $Hom(\Lambda^k(A), P)$ instead of $Hom(\Lambda^k(T(M)), P)$, we would get an algebroid structure such that when exact pre-versions are considered, the induced Bourbaki $A$-pre-calculus would be the Cartan calculus on $P$-valued $A$-forms.

\subsection{$AV$-Courant Algebroids}

By using the language of Cartan calculus on $V$-valued $A$-forms of Subsection (\ref{ss121}), one can define $AV$-Courant algebroids.
\begin{Definition} \cite{6} A dectet $(E, [\cdot,\cdot]_E, A, \rho_A, [\cdot,\cdot]_A, Q, V, \nabla, g, \chi)$ is called an $A V$-Courant algebroid if
\begin{itemize}
    \item $(\mathfrak{X}(E), [\cdot,\cdot]_E)$ is a Leibniz algebra,
    \item $(A, \rho_A, [\cdot,\cdot]_A)$ is a Lie algebroid, 
        \item $Q: E \to A$ is a vector bundle morphism satisfying $Q([u, v]_E) = [Q(u), Q(v)]_A$,
    \item $V$ is a vector bundle,
    \item $\nabla$ is an $A$-connection on $V$\footnote{It can be considered as a vector bundle morphism $\nabla: A \to \mathfrak{D}(V)$.},
    \item $g$ is a $V$-valued $E$-metric,
    \item $\chi: V \otimes A^* \to E$ is a vector bundle morphism such that
    \begin{equation} g(\chi(\xi), u) = \iota^{\nabla}_{Q(u)} \xi,
    \label{el5}
    \end{equation}
    for all $\xi \in \mathfrak{X}(V \otimes A^*) = \Omega^1(A; V)$,
    \item The symmetric part of the bracket is given by
    \begin{equation} [u, v]_E + [v, u]_A = \chi d^{\nabla} g(u, v),
    \label{el6}
    \end{equation}
    for all $u, v \in \mathfrak{X}(E)$,
    \item The following metric invariance property holds
    \begin{equation} \mathcal{L}^{\nabla}_{Q(u)}(g(v, w)) = g([u, v]_E, w) + g(v, [u, w]_E),
    \label{el7}
    \end{equation}
    for all $u, v, w \in \mathfrak{X}(E)$,
    \item The following is an exact sequence of vector bundles
    \begin{equation}
        0 \xrightarrow{\quad} V \otimes A^* \xrightarrow{\quad \chi \quad} E \xrightarrow{\ \quad Q \ \quad} A \xrightarrow{\quad} 0.
    \label{el8}
    \end{equation}
\end{itemize}
\label{dl2}
\end{Definition}
\noindent Presented in this manner, it is clear that any $AV$-Courant algebroid is an exact $A$-metric-Bourbaki algebroid of the form
\begin{equation}
    (E, \rho_E = \rho_A Q, [\cdot,\cdot]_E, R = V, g, Z = V \otimes A^*, \chi, A, \rho_A, d = d^{\nabla}, l^A, \mathcal{L}^R = \mathcal{L}^{\nabla}, Q),
\label{el9}
\end{equation}
as one has 
\begin{equation} [u, f v]_E = f [u, v]_E + (\rho_A Q(u))(f) v,
\label{el10}
\end{equation}
for all $u, v \in \mathfrak{X}(E), f \in C^{\infty}(M, \mathbb{R})$. For $A V$-Courant algebroids, Equation (\ref{el5}) implies that $\chi$ is $g$-isotropic, so our main results partially reproduce Proposition 1 and Theorem 1 of \cite{6}. Moreover, $g$-isotropic splittings exist, so that the twist of the standard bracket is given by a $Z$-valued $A$-2-form. The induced Bourbaki $A$-pre-calculus coincides with the Cartan calculus on $V$-valued $A$-forms on the pair $R = V, Z = V \otimes A^*$.

\subsection{$G$-Algebroids}

\begin{Definition} \cite{4} For a Lie group $G$, consider a principal $G$-bundle for an admissible group data set\footnote{See \cite{4}, for details.} and two associated vector bundles $E$ and $R$. A septet $(E, \rho_E, [\cdot,\cdot]_E, R, g, \mathbb{D}, \mathbb{L}^E)$ is called a $G$-algebroid if
\begin{itemize}
    \item $(E, \rho_E, [\cdot,\cdot]_E)$ is a Leibniz algebroid,
    \item $R$ is a vector bundle, 
    \item $g$ is an $R$-valued $E$-metric,
    \item $\mathbb{L}^E: \Omega^1(E) \times \mathfrak{X}(R) \to \mathfrak{X}(E)$ is a $C^{\infty}(M, \mathbb{R})$-bilinear map,
    \item $D: \mathfrak{X}(R) \to \mathfrak{X}(E)$ is an $\mathbb{R}$-linear map satisfying
    \begin{equation}
        \mathbb{D}(f r) = f \mathbb{D} r + \mathbb{L}^E_{D_E f} r,
    \label{el11}
    \end{equation}
    \item The symmetric part of the bracket satisfies
    \begin{equation}
        [u, v]_E + [v, u]_E = \mathbb{D} g(u, v),
    \label{el12}
    \end{equation}
    \item The action $[u,\cdot]_E$ preserves the $G$-structure,
\end{itemize}
for all $f \in C^{\infty}(M, \mathbb{R}), r \in \mathfrak{X}(R), u, v \in \mathfrak{X}(E)$.
\label{dl3}
\end{Definition}
\noindent When one forgets about the information about the principal $G$-bundle and structure preserving, it is clear that the associated bundle $E$ carries an $E$-Bourbaki algebroid structure. Even though this definition is quite similar to our Bourbaki algebroid definition, their exact versions differ; where in \cite{4} exact ones fit into the sequence
\begin{equation}
    T^*(M) \otimes R \xrightarrow{\quad  \quad} E \xrightarrow{\ \quad \rho_E \ \quad} T(M) \xrightarrow{\quad} 0,
\label{el13}
\end{equation}
and there is no restriction on $\mathbb{D}$ or $\mathbb{L}^E$ analogous to Equation (\ref{eh1}) or (\ref{eh2}).

\subsection{Omni-Lie, Higher Omni-Lie Algebroids}

One can find another way to generalize the standard Courant algebroid $T(M) \oplus T^*(M)$, when one considers $T(M) = \mathfrak{D}(\Lambda^0(T^*(M))$ and $T^*(M)$ as its dual. If one instead considers $\mathfrak{D}(P)$ for some arbitrary vector bundle $P$, then one can construct another vector bundle $\mathfrak{J}(P)$, which is isomorphic to the first jet bundle $\mathfrak{J}^1(P)$ and is dual to $\mathfrak{D}(P)$ in the sense that is explained in \cite{7}\footnote{The non-degeneracy condition is dealt with this $P$-dual notion.}. In short, we replace $\Lambda^0(T^*(M)), T(M)$ and $T^*(M)$ with $P, \mathfrak{D}(P)$ and $\mathfrak{J}(P)$, respectively. Following the constructions of \cite{7}, we have maps
\begin{align}
\mathfrak{d}:& \ \mathfrak{X}(P) \to \mathfrak{X}(\mathfrak{J}(P)), \nonumber\\
\mathfrak{L}: & \ \mathfrak{X}(\mathfrak{D}(P)) \times \mathfrak{X}(\mathfrak{J}(P)) \to \mathfrak{X}(\mathfrak{J}(P)), \nonumber\\
\langle \cdot,\cdot \rangle_P: & \ \mathfrak{X}(\mathfrak{D}(P)) \times \mathfrak{X}(\mathfrak{J}(P)) \to \mathfrak{X}(P).
\label{el14}
\end{align}
These maps gives a Bourbaki $\mathfrak{D}(P)$-pre-calculus. If one considers the vector bundle $E = \mathfrak{D}(P) \oplus \mathfrak{J}(P)$, one can construct a bracket on $E$,
\begin{equation} [D_1 + j_1, D_2 + j_2]_E := [D_1, D_2]_{\mathfrak{D}(P)} + \mathfrak{L}_{D_1} j_2 - \mathfrak{L}_{D_2} j_1 + \mathfrak{d} \langle D_2, j_1 \rangle_P,
\label{el15}
\end{equation}
which is exactly of the same form as our standard bracket (\ref{ek4}).
\begin{Definition} \cite{7} A quartet $(E, [\cdot,\cdot]_E, g, Q)$ is called an omni-Lie algebroid if
\begin{itemize}
    \item $E = \mathfrak{D}(P) \oplus \mathfrak{J}(P)$ for some vector bundle $P$,
    \item The bracket is given as in Equation (\ref{el15}),
    \item $g(D_1 + j_1, D_2 + j_2) := \langle D_1, j_2 \rangle_P + \langle D_2, j_1 \rangle_P$,
    \item $Q: E = \mathfrak{D}(P) \oplus \mathfrak{J}(P) \to \mathfrak{D}(P)$ is the projection onto the first component.
\end{itemize}
\label{dl4}
\end{Definition}
\noindent By Theorem 3.1 of \cite{7}, every omni-Lie algebroid is an exact $\mathfrak{D}(P)$-metric-Bourbaki algebroid of the form
\begin{align} &(E = \mathfrak{D}(P) \oplus \mathfrak{J}(P), \rho_E = \rho_A Q = \rho_{\mathfrak{D}(P)} proj_1, [\cdot,\cdot]_E, R = P, g, Z = \mathfrak{J}(P), \chi = inc_2, \nonumber\\
& \qquad \qquad \qquad A = \mathfrak{D}(P), \rho_A = \rho_{\mathfrak{D}(P)}, d = \mathfrak{d}, l^A, \mathcal{L}^R = Q = proj_1, Q = proj_1),
\label{el16}
\end{align}
which satisfies $Q([u, v]_E) = [Q(u), Q(v)]_{\mathfrak{D}(P)}$. The induced Bourbaki $\mathfrak{D}(P)$-pre-calculus is given by the maps in Equation (\ref{el14}) for the pair $R = P, Z = \mathfrak{J}(P)$.

A natural question is that whether there is an analogous \textit{higher} version of omni-Lie algebroids, and the answer is affirmative \cite{35}. One can extend the maps $\mathfrak{d}, \mathfrak{L}$ and $\langle \cdot,\cdot \rangle_P$ to $k$th jet bundle $\mathfrak{J}^k(P)$, so that these maps coincide with the Cartan calculus on $P$-valued $\mathfrak{D}(P)$-$k$-forms, $Hom(\Lambda^k(\mathfrak{D}(P)), P)$, which does not require any connection on $P$ since $\mathfrak{D}(P)$ is already the derivation bundle \cite{35}. Consequently, one can define a bracket of the form (\ref{el15}), which makes $E = \mathfrak{D}(P) \oplus \mathfrak{J}^k(P)$ an exact $\mathfrak{D}(P)$-pre-metric-Bourbaki algebroid of the form
\begin{align} &(E = \mathfrak{D}(P) \oplus \mathfrak{J}^k(P), \rho_E = \rho_A Q = \rho_{\mathfrak{D}(P)} proj_1, [\cdot,\cdot]_E, R = \mathfrak{J}^{k-1}(P), g, Z = \mathfrak{J}^k(P), \chi = inc_2, \nonumber\\
& \qquad \qquad \qquad A = \mathfrak{D}(P), \rho_A = \rho_{\mathfrak{D}(P)}, d = \mathfrak{d}, l^A, \mathcal{L}^R = Q = proj_1, Q = proj_1),
\label{el17}
\end{align}
by Theorem 2.11 of \cite{35}, which is called a higher omni-Lie algebroid. It also satisfies $Q([u, v]_E) = [Q(u), Q(v)]_{\mathfrak{D}(P)}$. The induced Bourbaki $\mathfrak{D}(P)$-pre-calculus is given by the maps in Equation (\ref{el14}) for the pair $R = \mathfrak{J}^{k-1}(P), Z = \mathfrak{J}^k(P)$. According to Remark 2.13 of \cite{35}, one can deform the bracket of a higher omni-Lie algebroid by a section of $\mathfrak{J}^{k+2}(P)$, which would be a special case for a twist by a $\mathfrak{J}^k(P)$-valued $\mathfrak{D}(P)$-2-form. For $k = 1$, a higher omni-Lie algebroid reduces to an omni-Lie algebroid. 

There is another possible generalization of omni-Lie algebroids, namely $E$-Courant algebroids \cite{5}, they are also metric-Bourbaki algebroids, and our main results are directly related to Theorem 5.4 of \cite{5}.


\subsection{Anti-Commutable Pre-Leibniz Algebroids}

Another way to construct a Bourbaki $A$-pre-calculus is to consider a local almost-Leibniz algebroid $E$ and admissible linear $E$-connections, which are defined in \cite{18}. On a local almost-Leibniz algebroid $(E, \rho_E, [\cdot,\cdot]_E, L^E)$ given by a locality operator of the form $L^E: \Omega^1(E) \times \mathfrak{X}(E) \times \mathfrak{X}(E) \to \mathfrak{X}(E)$, a linear $E$-connection $\nabla$ is called ``admissible'' if the symmetric part of the bracket satisfies
\begin{equation}
    [u, v]_E + [v, u]_E = L^E(e^a, \nabla_{X_a} u, v) + L^E(e^a, \nabla_{X_a} v, u),
\label{ez101}
\end{equation}
for all $u, v \in \mathfrak{X}(E)$, where $(X_a)$ is a local $E$-frame with its dual local $E$-coframe $(e^a)$. If such linear $E$-connections exist, then the algebroid is said to be \textit{anti-commutable}. In \cite{18}, it is proven that for an admissible linear $E$-connection $\nabla$, the following \textit{modified bracket} is an almost-Lie bracket:
\begin{equation} [u, v]_E^{\nabla} := [u, v]_E - L^E(e^a, \nabla_{X_a} u, v).
\label{ez102}
\end{equation}
The modified bracket enters the definition of the $E$-torsion operator since the usual form of torsion is not tensorial when it is constructed with a local almost-Leibniz bracket. Accordingly, in order to have a tensorial torsion, one defines \cite{17} $E$-torsion operator
\begin{align} T(\nabla)(u, v) &:= \nabla_u v - \nabla_v u - [u, v]_E + L^E(e^a, \nabla_{X_a} u, v) \nonumber\\
& \ = \nabla_u v - \nabla_v u - [u, v]_E^{\nabla}.
\label{ez103}
\end{align}
The tensoriality of the $E$-curvature operator is a little bit more involved, and it includes locality projectors defined in \cite{13}. On a regular local almost-Leibniz algebroid $(E, \rho_E, [\cdot,\cdot]_E, L^E)$, a locality projector is defined as a $C^{\infty}(M, \mathbb{R})$-linear map $\mathcal{P}: \mathfrak{X}(E) \to \mathfrak{X}(E)$ satisfying $im(\mathcal{P} L^E) \subset ker(\rho_E)$, and $\mathcal{P}|_{ker(\rho_E)} = id_{ker(\rho_E)}$. Given a locality projector $\mathcal{P}$ on a regular local pre-Leibniz algebroid $E$, the following \textit{projected modified bracket} is a pre-Lie bracket:
\begin{equation} [u, v]_E^{\hat{\nabla}} := [u, v]_E - \mathcal{P} L^E(e^a, \nabla_{X_a} u, v),
\label{ez104}
\end{equation}
for an admissible linear $E$-connection $\nabla$. In terms of this new bracket one defines \cite{18} a tensorial $E$-curvature operator:
\begin{align} R(\nabla)(u, v)w &:= \nabla_u \nabla_v w - \nabla_v \nabla_u w - \nabla_{[u, v]_E} w + \nabla_{\mathcal{P} L^E(e^a, \nabla_{X_a} u, v)} \nonumber\\
& \ = \nabla_u \nabla_v w - \nabla_v \nabla_u w - \nabla_{[u, v]_E^{\hat{\nabla}}}.
\label{ez105}
\end{align}

One can then extend the brackets $[\cdot,\cdot]_E^{\nabla}$ and $[\cdot,\cdot]_E^{\hat{\nabla}}$ to \textit{$E$-Leibniz derivatives} $\mathcal{L}(\nabla), \hat{\mathcal{L}}(\nabla): \mathfrak{X}(E) \times \Omega^k(E) \to \Omega^k(E)$, analogously to Equation (\ref{eb1}). Moreover, one can define a \textit{modified and projected modified $E$-exterior derivatives}\footnote{These are not exterior covariant derivatives or generalizations of them.} $d(\nabla), \hat{d}(\nabla): \Omega^k(E) \to \Omega^{k+1}(E)$ analogously to Equation (\ref{eb3}) \cite{18}. Coincidentally, we have two Bourbaki $E$-pre-calculus for an admissible linear connection $\nabla$ of an anti-commutable regular pre-Leibniz algebroid on the vector bundle pair $R = \Lambda^{k-1}(E^*), Z = \Lambda^k(E^*)$, 
\begin{align} & \Bigl( \iota = \iota^E, d = d(\nabla), l_{\Omega} \Upsilon = \Omega \wedge \Upsilon, \mathcal{L}^R = \mathcal{L}(\nabla), \mathcal{L}^Z = \mathcal{L}(\nabla) \Bigr), \nonumber\\
& \left( \iota = \iota^E, d = \hat{d}(\nabla), l_{\Omega} \Upsilon = \Omega \wedge \Upsilon, \mathcal{L}^R = \hat{\mathcal{L}}(\nabla), \mathcal{L}^Z = \hat{\mathcal{L}}(\nabla) \right).
\label{ez106}
\end{align}

We conclude this section with the admissible linear $E$-connections on almost-Bourbaki algebroids. By definition of admissible linear $E$-connections and almost-Bourbaki algebroids, we have
\begin{equation} [u, v]_E + [v, u]_E = \mathbb{D} g(u, v) = L^E(e^a, \nabla_{X_a} u, v) + L^E(e^a, \nabla_{X_a} v, u).
\label{ez107}
\end{equation}
By Corollary (\ref{cf1}), we have $L^E(\Omega, u, v) = \mathbb{L}^E_{\Omega}(g(u, v))$, where $\mathbb{L}^E$ is the $E$-symbol of $\mathbb{D}$. Hence, the condition for admissibility becomes
\begin{equation}
    \mathbb{D}(g(u, v)) = \mathbb{L}^E_{e^a} \left( g(\nabla_{X_a} u, v) + g(u, \nabla_{X_a} v) \right),
\label{ez108}
\end{equation}
which resembles a form of metric-compatibility condition. In \cite{18}, it was proven that for several algebroid structures, the admissibility is indeed equivalent to metric-compatibility in some generalized sense. In particular, it was shown that for almost-Courant algebroids, the condition is equivalent to usual $E$-metric-compatibility. The above result (\ref{ez108}) reproduces all of these for particular choices of $\mathbb{L}^E$. 

We also note that for exact pre-Bourbaki algebroids, there is no need for a locality projector when defining tensorial curvature operators since the image of $L^E$ already lies inside the image of $\chi$, which is equal to kernel of the anchor by the exactness.


\section{Concluding Remarks}
\label{s13}

\noindent In this paper, we analyze the hierarchy of defining axioms of Courant algebroids: almost-Leibniz, almost-Courant, metric, pre-Courant and Courant. Then, we repeat the same construction for the higher case, where we axiomatize the properties of standard higher Courant algebroids constructed with the higher Dorfman bracket \cite{3}. Focusing on the crucial points of these structures, we imitate another hierarchy, starting from an almost-Leibniz algebroid. The next step is the construction of almost-Bourbaki algebroids, where the symmetric part of the bracket is given by a vector bundle valued metric and a first-order differential operator. Then, by introducing an additional data, a map acting on the sections of an arbitrary vector bundle that would be analogous to Lie derivative of forms, we generalize the notion of metric invariance and define almost-metric-Bourbaki algebroids, parallel to metric algebroids. By considering the cases when the anchor is a morphism of brackets and the Leibniz-Jacobi is satisfied, we define the final steps in our hierarchy, namely pre-metric-Bourbaki and metric-Bourbaki algebroids. Similarly to exact Courant algebroids, we define exact versions of these algebroids. For the case of exact pre-metric-Bourbaki algebroids, we induce some maps generalizing the exterior derivative, Lie derivative, interior product and taking a wedge product with a 1-form. Axiomatizing the important properties of these maps with the insight from the \v{S}evera classification \cite{16}, we define Bourbaki pre-calculi. Conversely, given a Bourbaki pre-calculus, we show that one can construct a Dorfman-like bracket in order to get an exact pre-metric-Bourbaki algebroid. Moreover, we prove that any exact pre-metric-Bourbaki algebroid with certain additional conditions has to have a bracket that is the twisted version of this Dorfman-like bracket. We establish the properties of this twist, which is a result that partly generalizes the \v{S}evera classification. Later, we further generalize our constructions, by replacing the tangent bundle with an arbitrary pre-Lie algebroid $A$. We define $A$-Bourbaki algebroids, $A$-metric-Bourbaki algebroids, Bourbaki $A$-pre-calculi, and extend our results for them. Moreover, we give several examples of Bourbaki and metric-Bourbaki algebroids together with their $A$-versions, so that our paper also serves as a dictionary of various algebroid structures from the literature with a unified notation. These examples include Lie, dull \cite{24}, metric \cite{14}, Courant, higher metric, higher Courant \cite{3}, Jacobi \cite{30}, Conformal Courant \cite{32} algebroids for Bourbaki algebroids, and $AV$-Courant \cite{6}, $G$- \cite{4}, omni-Lie \cite{7}, Higher omni-Lie \cite{35} algebroids for $A$-Bourbaki algebroids. We also give several examples of Bourbaki pre-calculi including Cartan calculus of usual forms, of vector bundle valued forms, of Lie algebroids forms, and of vector bundle valued Lie algebroid forms, and the induced ones from the exact pre-metric-Bourbaki algebroid examples. Moreover, we explain the relations between Bourbaki algebroids and Bourbaki pre-calculi for these specific cases, and use our machinery to construct various new algebroid structures.

A natural continuation of our work would be to define $A$-Bourbaki algebroids and Bourbaki $A$-pre-calculus, when $A$ is itself an exact Bourbaki algebroid. Yet, these constructions seem non-trivial, as one has to consider complicated combinations of vector bundle valued metrics and locality operators in a way that resembles some sort of semi-direct product. We plan to investigate this case in detail in the future. We hope that these results will lead to a ``doubled'' version of Bourbaki algebroids. This construction would be analogous to the Drinfeld double of Lie bialgebroids \cite{36}, and might answer the question ``what is the double of a Courant algebroid?'' \cite{37}. Lie bialgebroids and Courant algebroids equipped with Dirac structures are in one-to-one correspondence, so these results might be extended to Courant or Bourbaki versions of bialgebroids. Closely related to this topic, there is another correspondence between Lie bialgebroids and mathced pair of Lie algebroids \cite{38}, where the latter consist of two Lie algebroids equipped with flat connections satisfying certain compatibility conditions. These discussions seem to be closely related to the possible generalizations of Bourbaki algebroids and Bourbaki pre-calculi, and we plan to work on the details of these correspondences in a more general setting in the near future.


It would be also interesting to  weaken some of the assumptions in our main structures and results. Even though we try to make the constructions in the most general way, we still have restrictive assumptions. For example, the anchor is a morphism of brackets, the image of the map $\chi$ is $g$-isotropic, and the map $\mathcal{L}^R_{\rho_E(u)}$ replaces\footnote{See footnote 11.} $\mathcal{L}^R_u$. On the other direction with stronger assumptions, we never consider the effect of Leibniz-Jacobi identity on the conditions of Bourbaki pre-calculus in this paper. We have some preliminary but promising results in this direction, and we will investigate this route in a consecutive paper.

Another possible route for future work is to consider a module/derivation structure over some arbitrary ring instead of the ring of smooth functions $C^{\infty}(M, \mathbb{R})$ in line with Lie-Rinehart algebras \cite{39}. We expect this generalization to work without any serious difficulties. Moreover, there are many geometrical structures that can be ``lifted'' to the level of pre-Leibniz algebroids, and in particular of Bourbaki algebroids, including complex, symplectic, contact and Finsler structures. For example, in \cite{19}, statistical and Hessian structures are generalized in the framework of pre-Leibniz algebroids. We plan to work on these various structures on Bourbaki algebroids in the near future. Currently, we are working on the generalizations of conformal and projective structures, which will hopefully lead us to the algebroid version of tractor geometry.


\section*{Acknowledgments}

We are all thankful to Cem Yetişmişoğlu, Yasin \c{C}akmak and Se\c{c}il Tunal\i \ for valuable discussions on many details of this work. We are also grateful to Oğul Esen for pointing out a possible research route by bringing the literature on matched pairs of Lie algebroids to our attention in a QDIS conference. Hopefully, this route will yield a fruitful collaboration in the future.

KD is funded by İstanbul Technical University BAP Postdoctoral Research Fellowship (DOSAP) with project number ``TAB-2021-43202''.


\newpage


\end{document}